\numberwithin{equation}{section}
\newtheorem{theorem}{Theorem}[section] 
\newtheorem{proposition}[theorem]{Proposition} 
\newtheorem{lemma}[theorem]{Lemma}
\theoremstyle{definition}
\newtheorem{construction}[theorem]{Construction}
\newtheorem{notn}[theorem]{Notation}
\newcommand{\PGammaL}{\mathop{\mathrm{P}\Gamma\mathrm{L}}}
\newcommand{\GL}{\mathop{\mathrm{GL}}}
\newcommand{\la}{\langle}
\newcommand{\ra}{\rangle}
\DeclareMathOperator{\Aut}{Aut}
\DeclareMathOperator{\Inn}{Inn}
\DeclareMathOperator{\Out}{Out}
\newcommand{\norml}{\vartriangleleft}
\newcommand\Dl{\Delta}
\newcommand\cA{{\mathcal A}}
\newcommand\cB{{\mathcal B}}
\newcommand\cG{{\mathcal G}}
\DeclareMathOperator{\im}{Im}
\mathchardef\tnode="020E 
\def\arc{
  \hbox{\kern -0.15em
  \vbox{\hrule width 3.4em height 0.6ex depth -0.5 ex}
  \kern -0.33em}}
\def\darc{
  \rlap{\lower0.2ex\arc}{\raise0.2ex\arc}
  \kern -0.05em}
\def\stroke#1{
  \kern 0.05em
  \rlap\arc{{\textstyle{#1}}\atop\phantom\arc}
  \kern -0.30em}
\def\dstroke#1{
  \kern 0.05em
  \rlap\darc{{\textstyle{#1}}\atop\phantom\darc}
  \kern -0.30em}
\def\etc{\>\>\>\>\cdots\>\>\>\>} 
\def\centerscript#1{
  \setbox0=\hbox{$\tnode$}
  \hbox to \wd0{\hss$\scriptstyle{#1}$\hss}}
\def\node{
  \def\super{}
  \def\sub{}
  \futurelet\next\dolabellednode}
  \let\sp=^
  \let\sb=_
  \def\dolabellednode{%
    \ifx\next\sb\let\next\getsub
    \else
      \ifx\next\sp\let\next\getsuper
      \else\let\next\donode
      \fi
    \fi
    \next}
  \def\getsub_#1{\def\sub{#1}\futurelet\next\dolabellednode}
  \def\getsuper^#1{\def\super{#1}\futurelet\next\dolabellednode}
  \def\donode{%
    \rlap{$\mathop{\phantom\tnode}\limits_{\centerscript{\sub}}
    ^{\centerscript{\super}}$}\tnode}
\def\varcdn{
  \kern -0.03em\vbox{\kern -0.5ex
  \hbox to \wd0{\hss\vrule width 0.04em depth 5.8ex\hss}
  \kern -0.3ex
  \hbox{$\tnode$}}}
\mathchardef\tnodef="020F 
\def\nodef{
  \def\super{}
  \def\sub{}
  \futurelet\next\dolabellednodef}
  \let\sp=^
  \let\sb=_
  \def\dolabellednodef{%
    \ifx\next\sb\let\next\getsubf
    \else
      \ifx\next\sp\let\next\getsuperf
      \else\let\next\donodef
      \fi
    \fi
    \next}
  \def\getsubf_#1{\def\sub{#1}\futurelet\next\dolabellednodef}
  \def\getsuperf^#1{\def\super{#1}\futurelet\next\dolabellednodef}
  \def\donodef{%
    \rlap{$\mathop{\phantom\tnodef}\limits_{\centerscript{\sub}}
    ^{\centerscript{\super}}$}\tnodef}
\def\varcdnf{
  \kern -0.03em\vbox{\kern -0.5ex
  \hbox to \wd0{\hss\vrule width 0.04em depth 5.8ex\hss}
  \kern -0.3ex
  \hbox{$\tnodef$}}}
\title[Amalgams related to $M_{24}$, $He$ and $A_{16}$]{A characterisation of weakly locally projective amalgams related to $A_{16}$ and the sporadic simple groups $M_{24}$ and $He$}
\author{Michael Giudici, A.~A.~Ivanov, Luke Morgan and Cheryl
  E.~Praeger}
\address{Michael Giudici, Luke Morgan, Cheryl E. Praeger: School of Mathematics and Statistics,
  University of Western Australia, 35 Stirling Highway, Crawley, WA 6009, Australia.} 
\email{michael.giudici@uwa.edu.au, luke.morgan@uwa.edu.au \newline \indent cheryl.praeger@uwa.edu.au}
\address{A. A. Ivanov: Department of Mathematics,
Imperial College, 180 Queen's Gate,  London, SW7 2BZ, United Kingdom }
\email{a.ivanov@imperial.ac.uk}
\date{}
\thanks{The first and third authors are supported by the  Australian  Research Council Discovery Project Number DP120100446 while the fourth author is supported by DP130100106.} 
\subjclass[2010]{20B25, 05C25, 20D08}
\begin{document}

\begin{abstract}
A simple undirected graph is weakly $G$-locally projective, for a group of automorphisms $G$, if for each vertex $x$, the stabiliser $G(x)$ induces  on the set of vertices adjacent to $x$  a doubly transitive action  with socle the projective group $L_{n_x}(q_x)$ for an integer $n_x$ and a prime power $q_x$.  It is $G$-locally projective if in addition $G$ is vertex transitive. A theorem of Trofimov reduces the classification of the $G$-locally projective graphs to the case where the distance factors are as in one of the known examples. Although an analogue of Trofimov's result is not yet available for weakly locally projective graphs, we would like to begin a program of characterising some of the remarkable examples. We show that if a graph is weakly locally projective with each $q_x =2$ and $n_x = 2$ or $3$, and if the distance factors are as in the examples arising from the rank 3 tilde geometries of the groups $M_{24}$ and $He$, then up to isomorphism there are exactly two possible amalgams. Moreover, we consider an infinite family of amalgams of type $\mathcal{U}_n$ (where each $q_x=2$ and $n=n_x+1\geq 4$) and prove that if $n\geq 5$ there is a unique amalgam of type $\mathcal{U}_n$ and it is unfaithful, whereas if $n=4$ then there are exactly four amalgams of type $\mathcal{U}_4$, precisely two of which are faithful, namely the ones related to $M_{24}$ and $He$, and one other which has faithful completion $A_{16}$.
\end{abstract}
\maketitle

\section{Introduction}
Let $\Dl$ be an undirected, connected locally finite graph and $G$ be an 
automorphism group of $\Dl$. For a vertex $x$ of $\Dl$ let $G(x)$ be the 
stabilizer of $x$ in $G$ and $\Dl(x)$ be the set of neighbours of $x$
in $\Dl$. Let $G(x)^{\Dl(x)}$ denote the permutation group induced by
$G(x)$ on $\Dl(x)$.
We say that the action of $G$ on $\Dl$ is {\em weakly locally projective} if 
the following holds: for every vertex $x \in \Dl$ there is a positive 
integer $n_x$ and a prime power $q_x$ such that 
$$|\Dl(x)|=(q_x^{n_x}-1)/(q_x-1);$$
$$L_{n_x}(q_x) \leqslant G(x)^{\Dl(x)} \leqslant \PGammaL(n_x,q_x).$$
Here $L_{n_x}(q_x)$ is considered as a doubly transitive permutation group 
of degree $|\Dl(x)|$ and $\PGammaL(n_x,q_x)$ is the normalizer of 
$L_{n_x}(q_x)$ in the symmetric group on $\Dl(x)$. If a weakly locally 
projective action is also vertex-transitive it is said to be 
{\em locally projective}. Since every weakly locally projective action is 
edge-transitive it is easy to see that either

\begin{itemize}

\item[{\rm (a)}] the action is locally projective and there is a pair 
$(n,q)$ such that $n_x=n$ and $q_x=q$ for every $x \in \Dl$, or

\item[{\rm (b)}] there is bipartition $\Dl=\Dl_1 \cup \Dl_2$ of 
$\Dl$ and a quadruple of parameters $(n_1,q_1;n_2,q_2)$ 
such that $n_x=n_i$, $q_x=q_i$ whenever $x \in \Dl_i$.

\end{itemize}

The sequence $(n,q)$ or $(n_1,q_1;n_2,q_2)$ is said to be 
the {\em type} of a locally or weakly locally projective action, respectively.

Suppose that $G$ acts (weakly) locally projectively on $\Dl$ and $\{x,w\}$ is 
an edge of $\Dl$. Then the amalgam $\cA=\{G(x),G(w)\}$ formed by the 
stabilizers of the vertices incident to the edge is called a {\em (weakly) 
locally projective amalgam}. The {\em shape} of $\cA$ is the type of
the action of $G$ on $\Dl$. Since $G$ is an automorphism 
group of $\Dl$, it acts faithfully on the set of edges. Thus
$G(x)\cap G(w)=G(x,w)$ does not contain a nontrivial subgroup which is normal
in both $G(x)$ and $G(w)$. Amalgams with this property are called faithful. 

The present paper makes a modest contribution to the classification of the 
weakly locally projective amalgams. We were motivated by the following 
geometrical constructions.

\begin{construction}
\label{con:1}
Let $\cG$ be a geometry with a diagram
$$\node_1^1\stroke{{\rm X}}\node_2^2\arc\node_2^3\etc\node^{n-1}_2\arc\node^n_2,$$
for some $X$ (see for example \cite[p 2]{IS02} for notation), let $G$
be a flag-transitive automorphism group of $\cG$ such that the
stabilizer of an element of type 1 induces the full automorphism group
$L_n(2)$ of the corresponding residue. Let $\Dl$ be a graph whose
vertices are the elements of type 1 in $\cG$ and two such vertices are
adjacent in $\Dl$ if in $\cG$ they are incident to a common element of
type 2. Then the action of $G$ on $\Dl$ is locally projective of type
$(n,2)$.  
\end{construction}

\begin{construction}
\label{con:2}
Let $\cG$ be a geometry with a diagram 
$$\node_2^1\stroke{{\rm Y}}\node_2^2\arc\node_2^3\etc\node^{n-1}_2\arc\node^n_2,$$
for some $Y$, let $G$ be a flag-transitive automorphism group of $\cG$
such that the stabilizer of an element of type 1 induces the full
automorphism group $L_n(2)$ of the corresponding residue and the
stabilizer of an element of type 2 induces the group $S_3 \cong
L_2(2)$ on the set of elements of type 1 incident to that element. Let
$\Dl$ be a graph whose vertices are the elements of type 1 and 2 in
$\cG$ and two vertices are adjacent if they are incident as elements
of $\cG$. Then $\Dl$ is bipartite and the action of $G$ on $\Dl$ is
weakly locally projective of type $(n,2;2,2)$.  
\end{construction}

Remarkable locally projective amalgams can be obtained by Construction 
\ref{con:1} with $X$ being the geometry of edges and vertices of the Petersen graph.
The examples include geometries of the Mathieu groups $M_{22}$, $M_{23}$,
the Conway group $Co_2$, the Janko group $J_4$ and the Baby Monster
group, (see \cite{sashabook,IS02}).
 
Similarly remarkable weakly locally projective amalgams can be 
obtained by Construction \ref{con:2} with $Y$ being the rank 2 tilde geometry 
(the triple cover of the generalized quadrangle of order $(2,2)$ 
associated with $3 \cdot S_6$). The examples include geometries of the
Mathieu group $M_{24}$, the Conway group $Co_1$ and the Monster group $M$.

On the other hand, the classification of certain locally and weakly locally 
projective amalgams restricts the 
possibilities for the rank 2 residues $X$ and $Y$ in geometries as in 
Constructions \ref{con:1} and \ref{con:2}.

\medskip

Building on work of Tutte \cite{Tutte47}, in 1980 Djokovi\'c and Miller \cite{DM80} classified the locally projective amalgams of type $(2,2)$. In the same year the weakly locally projective 
amalgams of type $(2,2;2,2)$ were classified by Goldschmidt \cite{Goldschmidt80}.

In 1991 Trofimov \cite{trof1} proved a fundamental result on locally projective 
amalgams. The main consequence of Trofimov's Theorem 
is a bound on the order of the vertex 
stabilizer $G(x)$ in a locally projective action of type $(n,q)$ by 
a function of $n$ and $q$. Furthermore Trofimov determined 
the possibilities for the so-called {\em distance factors} 
$G_i(x)/G_{i+1}(x)$ (here $G_i(x)$ is the stabilizer in $G(x)$ of all the 
vertices whose distance from $x$ is at most $i$). All the possibilities 
for the distance factors are realized in known examples (see \cite{trofdurham}). 

Thus Trofimov's theorem reduces the classification problem of locally 
projective amalgams to its `restricted' version when the distance 
factors $G_i(x)/G_{i+1}(x)$ 
are assumed to be as in one of the known examples. 

The restricted problem was solved in 2004 by Ivanov and Shpectorov in
\cite{ivanshpect} for the amalgams obtained by Construction
\ref{con:1}. It is
worth mentioning that a few new amalgams were found within this
project whose distance factors are exactly as in the examples known
before. As a consequence of the classification it was shown that the
residue $X$ possesses a covering onto the geometry of edges and
vertices of one of the following three graphs: the complete graph
$K_4$ on 4 vertices;  the complete bipartite graph $K_{3,3}$ on 6
vertices; the Petersen graph.  

The analogue of Trofimov's theorem for weakly locally projective actions 
is not yet available. Nevertheless we would like to solve the
restricted problem for such amalgams coming from Construction \ref{con:2}. In
this paper we assume that $\Dl$ is a graph, $G$ is an automorphism
group of $\Dl$ whose action on $\Dl$ is weakly locally projective of
type $(3,2;2,2)$ and if $x$ is a vertex of valency 7 in $\Dl$ then the
distance factors are as follows: 
\begin{eqnarray}
G(x)/G_1(x) & \cong &L_3(2);\nonumber \\
G_1(x)/G_2(x)& \cong &C_2^3;\nonumber\\
G_2(x)/G_3(x)& \cong & C_2^3;\nonumber\\
G_3(x)/G_4(x)& \cong & 1; \label{distfactors}\\
G_4(x)/G_5(x)& \cong &C_2; \nonumber\\
G_5(x)& \cong &1.\nonumber
\end{eqnarray} 
This pattern appears in graphs obtained via Construction \ref{con:2} from the 
rank 3 tilde geometries of the groups $M_{24}$ and $He$. Our main
result is the following theorem.
\begin{theorem}
\label{thm:main}
Let $\Delta$ be a connected graph with automorphism group $G$ whose action is
weakly locally projective of type $(3,2;2,2)$. Let $x$ and $w$ be
adjacent vertices such that $x$ has valency 7 and $w$ has valency
3. Moreover, suppose that the distance factors are as in
$(\ref{distfactors})$. Then the following hold:
\begin{itemize}
\item[(1)] \label{thm:main1} $G(x)\cong 2^{1+6}_+\rtimes L_3(2)$, the centraliser of a $2A$-involution in $L_5(2)$;
\item[(2)] $G(w)=N\rtimes K$ where $N\cong C_2^4$ and 
$K\cong C_2^4\rtimes(S_3\times S_3)$, the stabiliser in $L_4(2)$ of a
  2-dimensional subspace of $N$;
\item[(3)] $G(x)\cap G(w)\cong 2^{1+6}_+\rtimes S_4$;
\item[(4)] Up to isomorphism, there are two faithful amalgams
  $\cA=\{G(x),G(w)\}$. 
\end{itemize}
\end{theorem}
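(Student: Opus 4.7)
My plan is to proceed in four stages, one for each part of the theorem, exploiting the distance factors \eqref{distfactors} and the weakly locally projective hypothesis throughout.

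\emph{Stage 1: structure of $G(x)$.} From \eqref{distfactors} we have $|G_1(x)| = 2^7$ together with a filtration $G_1(x) > G_2(x) > G_4(x) > 1$ whose factors have $\mathbb{F}_2$-dimensions $3$, $3$, $1$ and on which $\bar G := G(x)/G_1(x) \cong L_3(2)$ acts. The action on $G_1(x)/G_2(x)$ is identified with the natural $L_3(2)$-module $V$ by analysing the induced $G_1(x)$-action on the second neighbourhood of $x$; commutator duality then forces $G_2(x)/G_4(x) \cong V^*$. Next I show that $G_4(x) = Z(G_1(x))$ and $[G_1(x),G_1(x)] = G_4(x)$, so $G_1(x)$ is extraspecial of order $2^7$. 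The induced symplectic $\bar G$-module $G_1(x)/G_4(x)$ decomposes as $V \oplus V^*$ with both summands totally singular, pinning down the type as $2^{1+6}_+$. Splitness of the extension $G(x) = 2^{1+6}_+ \rtimes L_3(2)$ follows from the vanishing of $H^2(L_3(2), 2^{1+6}_+)$ for this module structure, and comparison with the centraliser of a transvection in $L_5(2)$ gives (1).

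\emph{Stage 2: structure of $G(w)$ and of $B := G(x) \cap G(w)$.} Orbit counting yields $|G(w)| = 2^{10} \cdot 3^2$ and $|B| = 2^{10} \cdot 3$. For (3), the subgroup $B$ of $G(x)$ has index $7$ and must coincide with the parabolic of $G(x) = 2^{1+6}_+ \rtimes L_3(2)$ stabilising the vertex $w$ in the natural $L_3(2)$-action on $\Delta(x)$; a direct computation shows this parabolic has shape $2^{1+6}_+ \rtimes S_4$. For (2), let $x = x_1, x_2, x_3$ be the three neighbours of $w$. The involutions $G_4(x_i)$ together with the central involution of $B$ generate a subgroup $N$ of $G(w)$; commutator calculations between the groups $G_1(x_i)$ combined with the $S_3$-action on $\{x_1,x_2,x_3\}$ show that $N \cong C_2^4$ and is normal in $G(w)$. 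The quotient $G(w)/N$ then acts faithfully on $N$ as the parabolic $2^4 \rtimes (S_3 \times S_3)$ of $L_4(2)$ stabilising a distinguished $2$-subspace of $N$, and splitness of the extension delivers (2).

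\emph{Stage 3: amalgam enumeration.} With $G(x)$, $G(w)$ and $B$ fixed up to isomorphism, any amalgam $\cA = \{G(x), G(w)\}$ of the prescribed shape is determined by a pair of embeddings $B \hookrightarrow G(x)$ and $B \hookrightarrow G(w)$ modulo the action of $\Aut(G(x)) \times \Aut(G(w)) \times \Aut(B)$. By Goldschmidt's lemma the number of such amalgams equals the number of double cosets $\Aut_{G(x)}(B) \backslash \Aut(B) / \Aut_{G(w)}(B)$, where $\Aut_{G(\ast)}(B)$ denotes the image of $N_{G(\ast)}(B)$ in $\Aut(B)$. I will compute $\Aut(B)$, determine $\Aut_{G(x)}(B)$ and $\Aut_{G(w)}(B)$ (exploiting the fact that $B$ is a maximal parabolic on each side), enumerate the double cosets, and for each resulting amalgam check whether $B$ contains a non-trivial subgroup normal in both $G(x)$ and $G(w)$; non-faithful amalgams are then discarded.

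The main obstacle lies in Stage~3: extracting $\Out(B)$ and the subgroups $\Aut_{G(x)}(B)/\Inn(B)$ and $\Aut_{G(w)}(B)/\Inn(B)$ with enough precision to separate the faithful gluings from each other and from any non-faithful candidates. Since the parabolic embedding $B \hookrightarrow G(x)$ is essentially unique up to $\Aut(G(x))$, the enumeration reduces to studying the $\Aut(B)$-orbit of the embedding $B \hookrightarrow G(w)$, and it is at this level that the two faithful amalgams, the $M_{24}$-amalgam and the $He$-amalgam, must be distinguished by a genuine invariant of the gluing while every other double coset is shown either to coincide with one of these or to produce a non-faithful amalgam.
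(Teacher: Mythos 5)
Your high-level strategy mirrors the paper's (pin down $G(x)$, then $G(w)$ and $B=G(x)\cap G(w)$, then count double cosets via Goldschmidt's lemma), but there are two genuine gaps. The first is in Stage~1: knowing that $G_1(x)/G_2(x)$ and $G_2(x)/G_4(x)$ are \emph{dual} $L_3(2)$-modules does not give you that $G_1(x)/G_4(x)$ \emph{decomposes} as $V\oplus V^*$ — there is a unique indecomposable $\GF(2)L_3(2)$-module that is a nonsplit extension of $V$ by $V^*$, and a priori $Q(x)/\la e_x\ra$ could be that module. Ruling it out is one of the hardest steps in the paper (Lemma~\ref{lem:qx/ex is semisimple}); it requires first developing the structure of the triangle stabiliser $G\{T\}$ (in particular that it splits over $\mathrm O_2(G\{T\})$, producing a subgroup $S\cong S_3$ of $G(T)$, and the self-centralising subgroup $N=Q(x)\cap Q(y)\cap Q(z)$), and then a delicate comparison of the $S$-invariant totally isotropic subspaces of $[W,S]$ against the quadratic form $a\mapsto a^2$. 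Your appeal to ``commutator duality'' establishes only the duality, not the splitting, so the isomorphism type in (1) is not yet pinned down. Relatedly, ``vanishing of $H^2(L_3(2),2^{1+6}_+)$'' is not a well-posed justification for splitness over a nonabelian kernel; the paper instead embeds $G(x)/\la e_x\ra$ into $\Aut(Q(x))$ and uses perfectness, and then still needs a complement analysis (there are four classes of complements, only one of which makes both $E_x$ and $E^x$ semisimple) to complete the identification with the $L_5(2)$ centraliser and to control $\Aut(G(x))$ later.

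The second gap is that Stage~3 is a plan rather than a proof, and it is precisely where the answer ``exactly two faithful amalgams'' is decided. You would need to establish: (i) $Q(x)$ is characteristic in $B$, so that $\Aut(B)$ acts on the isotropic $B$-invariant subspaces of $Q(x)/\la e_x\ra$; (ii) $|\Out(B)|=4$, generated by a central-type automorphism $\alpha$ and an exceptional automorphism $\beta$ that exists only because, in the $n=4$ case, there is a \emph{fourth} $B$-invariant totally isotropic $3$-space (the image of $N$) that can be swapped with $\overline{E_x}$; (iii) that both $\mathrm N_{\Aut(G(x))}(B)$ and $\mathrm N_{\Aut(G(w))}(B)$ induce only $\Inn(B)$ — this needs $\Aut(G(x))\cong (Q/Z(Q))\rtimes\Aut(L_3(2))$ and $\Aut(G(w))=\Inn(G(w))$, neither of which is free; and (iv) the faithfulness check, which discards the two cosets fixing the subgroup $E_x$ normalised by both sides. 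Without (i)--(iv) the double-coset count could a priori be $1$, $2$ or $4$, and you have no invariant separating the $M_{24}/He$ gluing from the $A_{16}$ gluing. (A smaller point: your generating set for $N$ — the $\la e_{x_i}\ra$ plus the centre of $B$ — yields at most $2^3$ elements since $e_{x_1}e_{x_2}e_{x_3}=1$; the correct $N$ is $Q(x_1)\cap Q(x_2)\cap Q(x_3)$, of order $2^4$.)
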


We see in Theorem \ref{thm:4} that
given $G_1\cong 2^{1+6}_+\rtimes L_3(2)$ and $G_2\cong N\rtimes K$
such that $G_1\cap G_2\cong 2^{1+6}_+\rtimes S_4$, there are four
amalgams $\{G_1,G_2\}$. However, only two arise from weakly locally projective
actions as in the remaining two, $G_1 \cap G_2 $ contains a nontrivial
subgroup which is normal in both $G_1$ and $G_2$. For one of the two faithful
amalgams  the sporadic groups $M_{24}$ and $He$ are completions, for the other $A_{16}$ is a completion, see Section~\ref{sec:realisations} for details. In fact, Theorem 4.7 applies to the infinite sequence of amalgams $(\mathcal U_n)_{n\geqslant 4}$ and we see that each member of this sequence is a unique (unfaithful) amalgam, unless $n=4$. This highlights how remarkable the amalgams related to the Held and Mathieu groups are.

\section{Distance two graphs and triangles}

Since we are studying the structure of the vertex stabilisers, as
opposed to the structure of $\Delta$, we may assume that $\Delta$ is a tree \cite[Chapter 1,\S4]{serre}.
Moreover, we do our analysis in the
\emph{distance two graph} of $\Delta$, that is, the graph with the
same vertex set as $\Delta$ but where two vertices are adjacent if and only if they are at
distance two in $\Delta$. Since $\Delta$ is connected and
bipartite, the distance two graph of $\Delta$ has two connected
components, one containing the vertices of $\Delta$ of valency 7 and
the other containing the vertices of $\Delta$ of valency 3. 
 For a graph $\Sigma$ with vertex $v$, we denote
the set of vertices at distance $i$ from $v$ by $\Sigma_i(v)$ and  write $\Sigma(v)$ for $\Sigma_1(v)$. 
For vertices $u$ and $v$ we will write $u \sim v$ to indicate that $u \in \Sigma(v)$. If a group $R$ acts on $\Sigma$ and $L$ is a subgroup of $R$ which stabilises a set $\Pi$ of vertices of $\Sigma$, we write $L^\Pi$ for the permutation group induced on $\Pi$ by $L$. Typically we use this notation where $L$ fixes a vertex $u$ and $\Pi=\Sigma(u)$.

We have the following lemma.

\begin{lemma}
\label{delta lemma}
Let $\Delta$ be a connected tree with automorphism group $G$ whose
action is weakly locally projective of type $(3,2;2,2)$ and suppose
that the distance factors are as in $(\ref{distfactors})$. Let
$\Gamma$ be the connected component of the distance two graph of
$\Delta$ which contains all the vertices of valency 7. Let $x\in V\Gamma$ and let $Q_i(x)$ be the stabiliser
in $G(x)$ of all vertices of $\Gamma$ of distance at most $i$ from $x$.  The 
following all hold.
\begin{enumerate}
\item[$(A1)$] $\Gamma$ is a connected graph of valency 14.
\item[$(A2)$] Every edge of $\Gamma$ is in a unique triangle.
\item[$(A3)$] $G$ is an arc-transitive automorphism group of $\Gamma$,
\item[$(A4)$] $G(x)$ induces $L_3(2)$ on the set of
  seven triangles in $\Gamma$ containing $x$.
\item[$(A5)$] For a triangle $T$ of $\Gamma$, the setwise stabiliser in $G$ of $T$ induces $S_3$ on $T$.
\item[$(A6)$] The kernel of $G(x)^{\Gamma(x)}$ acting on the set
  of triangles containing $x$ is $C_2^3$.
\item[$(A7)$] The distance factors for $\Gamma$ are as follows: 
\begin{eqnarray}
G(x)/Q_1(x) & \cong &C_2^3.L_3(2);\nonumber \\
Q_1(x)/Q_2(x)& \cong &C_2^3;\nonumber\\
Q_2(x)/ Q_3(x)& \cong & C_2 ;\nonumber\\
Q_3(x)& \cong &1.\nonumber
\end{eqnarray} 
\end{enumerate}
\end{lemma}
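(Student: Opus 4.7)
My plan is to transfer everything through the bijection between triangles of $\Gamma$ incident with a vertex $x \in V\Gamma$ and the $\Delta$-neighbours $w \in \Delta(x)$ (which are the valency-$3$ vertices of $\Delta$), and then to relate the $\Gamma$-stabiliser filtration $Q_i(x)$ to the $\Delta$-stabiliser filtration $G_i(x)$ of $(\ref{distfactors})$.

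I would first deal with (A1) and (A2) directly from the tree structure of $\Delta$. Each $w \in \Delta(x)$ contributes exactly the two vertices of $\Delta(w) \setminus \{x\}$ to $\Gamma(x)$, and because $\Delta$ is a tree these $14$ vertices are distinct; together with the observation that $\Delta(w)$ itself is a triangle of $\Gamma$, this gives (A1) and the existence half of (A2). For uniqueness, any triangle of $\Gamma$ through the $\Gamma$-edge $\{x,y\}$ has its third vertex connected by length-$2$ $\Delta$-paths to both $x$ and $y$, and pasting these against the unique $\Delta$-path from $x$ to $y$ in the tree pins down the triangle as $\Delta(w)$.

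For (A3) I would note that $G$ has two vertex-orbits on $\Delta$ distinguished by valency, so edge-transitivity of $G$ on $\Delta$ gives transitivity on $V\Gamma$; combined with $2$-transitivity of $G(x)$ on $\Delta(x)$ and transitivity of $G(x,w)$ on $\Delta(w) \setminus \{x\}$ (coming from $G(w)^{\Delta(w)} \cong S_3$), this upgrades to arc-transitivity on $\Gamma$. Parts (A4) and (A5) are then essentially corollaries of the bijection: the $G(x)$-action on the seven triangles through $x$ transports to the $L_3(2)$-action on $\Delta(x)$, and a triangle $T = \Delta(w)$ has $w$ as its unique common $\Delta$-neighbour (the median of $T$ in the tree), so its setwise stabiliser in $G$ is exactly $G(w)$, which induces $S_3$ on $T$.

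The technical crux is the identification $Q_i(x) = G_{2i}(x)$, which yields both (A6) and (A7). One direction is clear from $\Gamma$-distance being twice $\Delta$-distance within $V\Gamma$. For the other, a $Q_i(x)$-fixed element fixes all $V\Gamma$-vertices at $\Delta$-distance $\leq 2i$ from $x$; for a valency-$3$ vertex $u$ at odd $\Delta$-distance $< 2i$ from $x$, its three $\Delta$-neighbours all lie at even $\Delta$-distance $\leq 2i$ and so are fixed, whereupon the median argument used already for (A5) forces $u$ to be fixed too. Granted $Q_i(x) = G_{2i}(x)$, part (A7) telescopes out of $(\ref{distfactors})$, and (A6) is simply that the kernel of $G(x)/G_2(x)$ acting on the seven triangles is the image of $G_1(x)$, namely $G_1(x)/G_2(x) \cong C_2^3$. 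I expect this identification of filtrations to be the main obstacle; once set up correctly, the rest reduces to bookkeeping against $(\ref{distfactors})$.
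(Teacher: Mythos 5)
Your proposal is correct and follows essentially the same route as the paper: tree arguments for (A1)--(A2), edge-transitivity of $G$ on $\Delta$ plus local $2$-transitivity for (A3), the correspondence between triangles and valency-$3$ vertices for (A4)--(A5), and the identification $Q_i(x)=G_{2i}(x)$ for (A6)--(A7). The only difference is that you spell out the median argument establishing $Q_i(x)\leqslant G_{2i}(x)$, a step the paper asserts without proof.
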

\begin{proof}
Since $\Delta$ is a tree, (A1) holds and given two vertices $x$, $y$ of valency 7 in
$\Delta$ that are at distance two there is a unique vertex $z$ of $\Delta$ at
distance two from both $x$ and $y$. Thus (A2) holds. Since for each vertex $v$ of $\Delta$ we have that
$G(v)$ is 2-transitive on $\Delta(v)$, it follows that $G(x)$ acts
transitively on the set of paths of length two starting at
$x$. Moreover, $G$ is a vertex-transitive automorphism group of
$\Gamma$ and since arcs in $\Gamma$ correspond to paths of length two
in $\Delta$ with initial vertex having valency 7 it follows that $G$
is an arc-transitive group of automorphisms of $\Gamma$. Thus (A3)
holds.

For a given vertex $x\in V\Gamma$, the action of $G(x)$ on the set of
triangles containing $x$ is equivalent to 
$G(x)^{\Delta(x)}\cong G(x)/G_1(x)$ and so by
(1.1), (A4) holds.  Moreover, since $G(v)^{\Delta(v)}\cong L_2(2)\cong S_3$ for each vertex $v$ of $\Delta$
of valency three (A5) holds. For a positive integer $i$ we have 
$\Gamma_i(x)=\Delta_{2i}(x)$ and $Q_i(x)=G_{2i}(x)$. Hence the kernel of
$G(x)^{\Gamma(x)}$ acting on the set of triangles containing $x$ is
$G_1(x)/G_2(x)$ and so (A6) holds. We obtain (A7) from (\ref{distfactors}).
\end{proof}

Note that each triangle in $\Gamma$ corresponds to a vertex of valency
3 in $\Delta$. Thus the problem of identifying the isomorphism type of
$\{G(x),G(w)\}$ for a given pair $x$, $w$ of adjacent vertices in
$\Delta$ is equivalent to identifying the isomorphism type of
$\mathcal{A}=\{G(x),G\{T\}\}$ where $G\{T\}$ is the setwise stabiliser in $G$ of the
 triangle $T=\{x,y,z\}$ in $\Gamma$. Note in particular that $\mathcal A$ is faithful since 
 $\langle G(x), G\{T\} \rangle = G$ by connectivity.

\section{Determining the structure of $G(x)$ and $G\{T\}$}
\label{sec:determine}

Throughout this section we assume the following hypothesis:
\begin{center}
$\Gamma$ is a connected graph on which $G$ acts faithfully such that (A1)--(A7) of Lemma~\ref{delta lemma} hold.
\end{center}

We first establish some notation for the action of $G$ on $\Gamma$ which will hold for the rest of the
paper. We let $Q(x)$ denote the kernel
of the action of $G(x)$ on the
set of triangles containing $x$. Then $Q(x)=G_1(x)=\mathrm O_2(G(x))$ and
$G(x)/Q(x)\cong L_3(2)$. As in Lemma \ref{delta lemma} we write $Q_i(x)$ for the stabiliser in $G(x)$ of all
vertices of distance at most $i$ from $x$ in $\Gamma$. We fix a triangle $T=\{x,y,z\}$ containing $x$ and write $G\{T\}$
for the setwise stabiliser in $G$ of $T$. We write $G(T)$ for the pointwise stabiliser of $T$ in $G$, so that 
$$G(T) = G(x) \cap G(y) \cap G(z)$$
and $G(T)$ is a normal subgroup of $G\{T\}$. Moreover $G\{T\} / G(T) \cong S_3$. Some more
normal subgroups of $G\{T\}$ which we will need are
\begin{eqnarray*}
F & = & \mathrm O_2(G\{T\}), \\
N  & = & Q(x) \cap Q(y) \cap Q(z).
\end{eqnarray*} 
 Since $G$ is both vertex and triangle transitive, statements proved about $G(x)$ and $G\{T\}$ apply to
arbitrary vertex and triangle stabilisers, and we will use  appropriate notation for subgroups
conjugate to named subgroups of $G(x)$ and $G\{T\}$.

We now collate information about the actions of various subgroups of
$G(x)$.  We have the following lemma since $\{ G(x), G\{T\} \}$ is a faithful amalgam.

\begin{lemma}
\label{lem:lem1}
Let $R$ be a subgroup of $G(x) \cap G\{T\}$ and suppose that $R$ is normal in both $G(x)$ and $G\{T\}$. Then $R=1$.
\end{lemma}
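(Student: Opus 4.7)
The plan is to exploit the connectivity of $\Gamma$ together with the hypothesis (stated at the top of Section 3) that $G$ acts faithfully on $\Gamma$. I would first argue that the pair $\{G(x), G\{T\}\}$ generates $G$, and then deduce that any $R$ as in the statement would be a normal subgroup of $G$ that fixes the vertex $x$, hence fixes every vertex, hence is trivial.

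More concretely, set $H = \langle G(x), G\{T\} \rangle$. To show $H = G$, I would first observe that since $G\{T\}$ induces $S_3$ on $T = \{x,y,z\}$ by (A5), it contains elements mapping $x$ to $y$ and to $z$, so $y,z \in H\cdot x$ and $G(y), G(z) \leqslant H$. Then for any triangle $T'$ through $x$, (A4) provides an element $g \in G(x) \leqslant H$ with $T'^{g} = T$, so $G\{T'\} = G\{T\}^{g^{-1}} \leqslant H$; applying this for each $T'$ through $x$ shows that every neighbour in $\Gamma$ of $x$ lies in $H\cdot x$ and that each neighbour-stabiliser sits inside $H$. Iterating and using the connectivity of $\Gamma$, $H$ is transitive on $V\Gamma$; combined with $G(x)\leqslant H$ this forces $H=G$.

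Now suppose $R \leqslant G(x) \cap G\{T\}$ is normal in both $G(x)$ and $G\{T\}$. Then $R$ is normalised by the generating set $G(x) \cup G\{T\}$ of $G$, so $R \trianglelefteq G$. For every $v \in V\Gamma$, vertex-transitivity gives $g \in G$ with $v = g x$, and then $R = R^{g^{-1}} \leqslant G(x)^{g^{-1}} = G(v)$, so $R$ fixes $v$. Thus $R$ acts trivially on $V\Gamma$, and the standing assumption that $G$ acts faithfully on $\Gamma$ yields $R = 1$.

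No step is really an obstacle here; the lemma is essentially a restatement of the fact noted just after Lemma~\ref{delta lemma} that $\mathcal A$ is faithful because $\langle G(x), G\{T\}\rangle = G$. The only point that deserves care is the verification $H=G$, which is why I would write out the triangle-by-triangle propagation using (A4) and (A5) rather than simply invoke vertex- or arc-transitivity of $G$.
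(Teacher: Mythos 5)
Your proof is correct and is essentially the paper's own argument written out in full: the paper disposes of this lemma by noting (end of Section~2) that $\la G(x),G\{T\}\ra=G$ by connectivity, so the amalgam is faithful, and your proposal simply supplies the triangle-by-triangle details of that generation step together with the standing faithfulness hypothesis on the $G$-action.
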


\begin{lemma}
\label{lem:fact of gxt}
We have $G(x) \cap G\{T\} = Q(x) G(T)$.
\end{lemma}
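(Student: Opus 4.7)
The plan is to work modulo $G(T)$ and reduce the equality to showing that $Q(x)$ is not contained in $G(T)$.

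First I would establish the easy inclusion $Q(x) G(T) \leq G(x) \cap G\{T\}$. Indeed, $G(T) \leq G(x) \cap G\{T\}$ by definition, and $Q(x) \leq G(x) \cap G\{T\}$ because $Q(x)$, being by definition the kernel of $G(x)$ on the set of triangles through $x$, fixes $T$ setwise. Next I would identify the quotient $(G(x) \cap G\{T\})/G(T)$. By (A5), $G\{T\}/G(T) \cong S_3$ acts faithfully and transitively on the three vertices of $T$; the stabiliser of $x$ in this action has index three, so $(G(x) \cap G\{T\})/G(T) \cong C_2$. Consequently, the desired equality $Q(x) G(T) = G(x) \cap G\{T\}$ is equivalent to the image of $Q(x)$ in this $C_2$ being nontrivial, i.e.\ to $Q(x) \not\leq G(T)$.

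The core of the proof is therefore to establish $Q(x) \not\leq G(T)$, which I would do by contradiction. For each of the seven triangles $T'$ through $x$, the group $Q(x)$ stabilises $T'$ setwise, giving a homomorphism $\phi_{T'}\colon Q(x) \to \mathrm{Sym}(T' \setminus \{x\}) \cong C_2$. The hypothesis $Q(x) \leq G(T)$ says exactly that $\phi_T$ is trivial. Now (A4) gives that $G(x)$ acts transitively on the seven triangles through $x$, and $Q(x)$ is normal in $G(x)$, so conjugating $\phi_T$ by elements of $G(x)$ realises every $\phi_{T'}$; hence all of them are trivial. But this means that $Q(x)$ fixes every vertex of $\Gamma(x)$, so $Q(x) \leq Q_1(x)$, contradicting (A6) (or equivalently the second line of (A7)), which says $Q(x)/Q_1(x) \cong C_2^3 \neq 1$.

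The only nonroutine step is the transitivity argument that propagates triviality of $\phi_T$ to triviality of every $\phi_{T'}$; everything else is bookkeeping with the distance factors of Lemma~\ref{delta lemma}.
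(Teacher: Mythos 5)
Your proposal is correct and follows essentially the same route as the paper: both reduce to showing $Q(x)\not\leq G(T)$ via the index-$2$ quotient $(G(x)\cap G\{T\})/G(T)$ coming from (A5), and both derive the contradiction by using normality of $Q(x)$ in $G(x)$ together with transitivity on the seven triangles to force $Q(x)=Q_1(x)$, against (A6)/(A7). Your write-up merely makes the ``normality propagates triviality'' step explicit via the homomorphisms $\phi_{T'}$, which the paper leaves implicit.
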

\begin{proof}
Suppose that $Q(x) \leqslant G(T)$. The normality of $Q(x)$ in $G(x)$ now implies that $Q(x)$ fixes $\Gamma(x)$ pointwise which gives $Q(x) = Q_1(x)$, a contradiction to (A7). Hence $Q(x) G(T) > G(T)$. By (A5), $G\{T\}$ is 2-transitive on $T$ and therefore contains an element fixing $x$ and interchanging $y$ and $z$. Hence 
$$ |G(x) \cap G\{T\} : G(T)| = 2$$ and the result follows.
\end{proof}

By (A7) we have $|Q_2(x)|=2$. For each $u \in V\Gamma$ let $e_u \in G(u)$
 be such that $Q_2(u)=\la e_u\ra$. Put
\begin{eqnarray*}
E_x & = & \la e_u\mid u\in\Gamma(x)\ra, \\
E_T & = & \la e_x, e_y, e_z \ra.
\end{eqnarray*}
Observe that $E_x$ is a normal subgroup of $G(x)$ contained in $Q_1(x)$ and that $E_T$ is a normal subgroup of $G\{T\}$ contained in $G(T)$. Above we mentioned that we will use similar notation for subgroups conjugate to $E_x$, $E_T$, etc. As an example of this, we write
$$E_y = \la e_v \mid v \in \Gamma (y) \ra. $$

\begin{lemma}
\label{lem:distinct}
The involutions $e_x$, $e_y$  and $e_z$ are all distinct.
\end{lemma}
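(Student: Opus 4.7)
The plan is to argue by contradiction: assume that two of the three involutions coincide, say $e_x = e_y$, and use this coincidence together with the $S_3$-action on $T$ provided by (A5) to force all three to be equal. Then I will show that the cyclic subgroup they generate is normal in both $G(x)$ and $G\{T\}$, which by Lemma \ref{lem:lem1} forces it to be trivial, contradicting $|Q_2(x)| = 2$ from (A7).

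In detail, the first step is to exploit the fact that $Q_2(u) = \la e_u \ra$ is normal in $G(u)$ for each $u \in T$ (it is a term of the vertex-stabiliser filtration) and that conjugation by any $g \in G$ sends $Q_2(u)$ to $Q_2(g(u))$. By (A5) there exists $g \in G\{T\}$ of order $3$ permuting $x \mapsto y \mapsto z \mapsto x$. Under the assumption $e_x = e_y$, applying such a $g$ gives $\la e_x \ra^g = \la e_y \ra = \la e_x \ra$, so $g$ normalises $\la e_x \ra$; conjugating once more yields $\la e_z \ra = \la e_x \ra^{g^2} = \la e_x \ra$, hence $e_x = e_y = e_z$.

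The second step is to verify that $\la e_x \ra$ is normal in $G\{T\}$. Any $h \in G\{T\}$ sends $x$ to an element of $T$, so $\la e_x \ra^h = Q_2(x)^h = Q_2(h(x)) \in \{\la e_x\ra,\la e_y\ra,\la e_z\ra\}$, and these three subgroups now coincide. Since $\la e_x \ra$ is already normal in $G(x)$ and is contained in $G(x) \cap G\{T\}$, Lemma \ref{lem:lem1} (faithfulness of the amalgam) forces $\la e_x \ra = 1$, contradicting $|Q_2(x)| = 2$ from (A7). Hence $e_x \ne e_y$, and the same argument, together with the $S_3$-symmetry on $T$, disposes of the other two cases.

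I do not anticipate a real obstacle: the only subtlety is the passage from $e_x = e_y$ to $e_x = e_y = e_z$, which requires invoking the $3$-cycle in $G\{T\}^T \cong S_3$ rather than trying to argue directly inside $G(x) \cap G(y)$. Once that is in place, the rest is a clean application of the faithful amalgam hypothesis.
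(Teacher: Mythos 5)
Your proof is correct and follows essentially the same route as the paper: the paper phrases the step ``two coincide $\Rightarrow$ all three coincide'' via primitivity of $G\{T\}$ on $T$, while you make it explicit with a $3$-cycle, but the substance (the coincidence relation is $G\{T\}$-invariant, hence universal, and then $\la e_x\ra = Q_2(x)$ is normal in both $G(x)$ and $G\{T\}$, contradicting Lemma~\ref{lem:lem1}) is identical.
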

\begin{proof}
Since $G\{T\}$ acts on $T$ primitively, if the assertion fails,
$e_x=e_y=e_z$. Then $Q_2(x) = Q_2(y) = Q_2(z)$ is normalised by $G(x)$ and by $G\{T\}$, a contradiction to Lemma \ref{lem:lem1}.
\end{proof}

\begin{lemma}
\label{lem:e_x} The following hold.
\begin{itemize}
\item[(1)] $[e_u,e_v]=1$ for all $u,v \in \Gamma$ with $d(u,v)\leqslant 2$.
\item[(2)] We have  $e_xe_ye_z=1$.
\item[(3)] The involutions $e_u$ for $u\in\Gamma(x)$ are
  pairwise distinct. 
\item[(4)] $E_x=Q_1(x)\cong C_2^4$ and $E_x=\langle e_x\rangle \cup\{e_u\mid u\in\Gamma(x)\}$.
\item[(5)] The action of $G(x)$ on the nontrivial elements of $E_x/\langle e_x\rangle$ is equivalent to the action of $G(x)$ on
the set of triangles containing $x$.
\end{itemize}
\end{lemma}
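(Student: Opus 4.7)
The plan is to establish (1) directly, and then derive (2)--(5) by studying the subgroup $F := E_x\la e_x\ra \leqslant Q_1(x)$ together with the map $\phi\colon \Gamma(x) \to F/Q_2(x)$, $u \mapsto e_u Q_2(x)$.

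For (1), if $d(u,v)\leqslant 2$ in $\Gamma$ then $e_u\in Q_2(u)$ fixes every vertex within distance $2$ of $u$, so $e_u\in G(v)$; since $Q_2(v)=\la e_v\ra$ is normal in $G(v)$ with a unique nontrivial element, conjugation by $e_u$ fixes $e_v$ and so $[e_u,e_v]=1$.

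By (1) the group $F$ is elementary abelian; each $e_u$ for $u\in\Gamma(x)$ fixes $x$ and all of $\Gamma(x)$ pointwise, so $F\leqslant Q_1(x)$, and $G(x)$ normalises $F$ by permuting its generators. Thus $F/Q_2(x)$ is a $G(x)$-invariant subgroup of the $3$-dimensional $\mathbb F_2$-module $Q_1(x)/Q_2(x)\cong C_2^3$ for $G(x)/Q(x)\cong L_3(2)$. I would first show that this $L_3(2)$-action is nontrivial: otherwise $[G(x),Q_1(x)]\leqslant Q_2(x)$, and since $E_T\leqslant Q_1(x)$ this gives $[G(x),E_T]\leqslant Q_2(x)\leqslant E_T$, so $E_T$ would be normal in $G(x)$ as well as (automatically) in $G\{T\}$; Lemma~\ref{lem:lem1} would then force $E_T=1$, contradicting $e_x\in E_T$. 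By simplicity of $L_3(2)$, nontrivial implies faithful, and the only faithful $3$-dimensional $\mathbb F_2 L_3(2)$-modules are the natural and dual modules, both irreducible, so the only $G(x)$-invariant subgroups of $Q_1(x)/Q_2(x)$ are $0$ and itself. As $e_u\notin Q_2(x)$ by Lemma~\ref{lem:distinct} (applied to the triangle through $\{x,u\}$), $F/Q_2(x)\neq 0$, hence $F=Q_1(x)\cong C_2^4$.

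The most delicate step is identifying the fibres of $\phi$. Equivariance and transitivity of $L_3(2)$ on the seven nonzero elements of $F/Q_2(x)$ make $\phi$ surjective and $2$-to-$1$, giving a $G(x)$-invariant partition of $\Gamma(x)$ into seven pairs; the triangle partition (pairs $\{u,v\}$ with $\{x,u,v\}$ a triangle) is another such partition. Both arise from $G(x)$-equivariant surjections $\Gamma(x)\to S$ onto some $7$-point $L_3(2)$-set, and equivariance forces $u$ to map to a point fixed by the image of the stabiliser $G(x,u)$ in $L_3(2)$. This image has order $|G(x,u)|/|Q(x,u)| = 1536/64 = 24$ and coincides with the stabiliser of the triangle $T(u)$ in the $7$-triangle action; as a maximal parabolic of $L_3(2)$ it fixes a unique element of any $L_3(2)$-set on which it has a fixed point, so both maps agree with the triangle map, and hence the two partitions coincide. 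For a triangle $\{x,y,z\}$ this gives $\phi(y)=\phi(z)$, so $e_ye_z\in Q_2(x)=\{1,e_x\}$; Lemma~\ref{lem:distinct} rules out $e_ye_z=1$, forcing $e_xe_ye_z=1$, which is (2). Distinct triangles give distinct $\phi$-values, and (2) separates the two vertices within each triangle, so the fourteen involutions $e_u$ for $u\in\Gamma(x)$ are pairwise distinct, proving (3). Combined with $e_x\neq e_u$ from Lemma~\ref{lem:distinct}, the set $\la e_x\ra\cup\{e_u:u\in\Gamma(x)\}$ has $16$ distinct elements filling out $F=Q_1(x)\cong C_2^4$, proving (4). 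Part (5) is then the induced $G(x)$-equivariant bijection between triangles through $x$ and the seven nonzero elements of $E_x/\la e_x\ra$; note $E_x=F$ because (2) puts $e_x=e_ye_z\in\la e_y,e_z\ra\leqslant E_x$.
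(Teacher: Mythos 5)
Your part (1) is fine (it is the paper's own argument), and your fixed‑point argument identifying the fibres of $\phi$ with the triangle pairs is correct as far as it goes. However, there is a genuine gap at the foundation of parts (2)--(5): you assert that $Q_1(x)/Q_2(x)$ is a module for $G(x)/Q(x)\cong L_3(2)$, i.e.\ that $Q(x)$ centralises $Q_1(x)/Q_2(x)$ (equivalently $[Q(x),Q_1(x)]\leqslant Q_2(x)$), and everything else rests on this: it is needed for the dichotomy ``trivial or faithful‑irreducible'' (hence for $F=Q_1(x)$), and again when you treat the seven nonzero elements of $F/Q_2(x)$ as a transitive $L_3(2)$-set with maximal‑parabolic point stabilisers. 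At this point in the paper that fact is not available --- it is deduced in Lemma~\ref{lem:dualaction} \emph{from} parts (4) and (5) of the present lemma (via ``$Q(x)$ acts trivially on the set of triangles''), so you are implicitly invoking a consequence of what you are proving. A priori the image of $G(x)$ in $\GL(Q_1(x)/Q_2(x))\cong \GL_3(2)$ could be a nontrivial $2$-group (a quotient of $Q(x)/Q_1(x)\cong C_2^3$) with the $L_3(2)$-quotient acting trivially; your $E_T$-argument only excludes the case in which \emph{all} of $G(x)$ acts trivially. In the residual case $Q_1(x)/Q_2(x)$ has proper nonzero $G(x)$-invariant subgroups (for instance the fixed points of the $2$-group image), and the conclusion $F=Q_1(x)$ does not follow.

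The gap is fillable, but only with work you have omitted: one can show that $Q(x)/Q_1(x)$, viewed inside the $\mathbb{F}_2$-permutation module on the seven triangles through $x$, is a faithful irreducible $L_3(2)$-module, whence $Q_1(x)$, $Q(x)$ and $G(x)$ are the only normal subgroups of $G(x)$ containing $Q_1(x)$; since $\mathrm C_{G(x)}(Q_1(x)/Q_2(x))$ is such a normal subgroup and cannot equal $Q_1(x)$ for order reasons, it is $Q(x)$ or $G(x)$, and your argument then runs. For comparison, the paper avoids module theory here altogether: it studies the $G(x)$-invariant equivalence relation $u\approx v \iff \la e_x,e_u\ra=\la e_x,e_v\ra$ on the fourteen vertices of $\Gamma(x)$ and eliminates block sizes $1$, $7$ and $14$ using only Lemmas~\ref{lem:lem1} and~\ref{lem:fact of gxt}. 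You should either supply the missing irreducibility argument or switch to that combinatorial route.
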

\begin{proof}
Let $u$ and $v$ be as in (1). By definition, we have $\la e_u \ra = Q_2(u)$ so that $e_u \in G(v)$.  Since $Q_2(v)$ is a normal subgroup of $G(v)$ of order two, we have $e_v \in \mathrm Z(G(v))$ whence $[e_u,e_v] = 1$. Thus (1) holds.

We now define an equivalence relation on the set $\Gamma(x)$ which will aid us in proving (2)--(5). For $u,v\in \Gamma(x)$ we say $u \approx v$ if and only if $\la e_x,e_u \ra = \la e_x, e_v \ra$. It is immediate that $\approx$ is an equivalence relation. Since $e_x \in \mathrm Z(G(x))$ and $G(x)$ preserves the set $\Gamma(x)$ we see that $\approx$ is a $G(x)$-invariant relation. If $\approx$ is the universal relation, we have that 
$$\la E_x, e_x \ra = \la e_x, e_y\ra = \la e_x, e_y, e_z\ra = E_T$$
and so $E_T$ is a normal subgroup of $G(x)$ and of $G\{T\}$, a contradiction to Lemma~\ref{lem:lem1}. 

Suppose now that $\approx$ is the trivial relation. Then for all $u,v \in \Gamma(x)$ we have $e_u \neq e_v$, so that 
$$|E_x| \geqslant | \{ e_ u \mid u \in \Gamma(x) \}| = 14.$$
 By (1)  $E_x$ is an elementary abelian 2-group and by definition, $E_x \leqslant Q_1(x)$. Since $|Q_1(x)|=2^4$ by (A7) we have that $E_x = Q_1(x)$ and since $e_x \in Q_1(x)$
$$E_x = \{1, e_x \} \cup \{e_u \mid u \in \Gamma(x) \}.$$
On the other hand, $| \la e_x, e_y \ra | = 2^2$ and $d=e_xe_y$ is distinct from $1$, $e_x$ and $e_y$.  Now $d \in E_x$ and therefore $d=e_f$ for some $f\in \Gamma(x)$. This implies $y \approx f$, a contradiction to our assumption that $\approx$ is trivial.

Suppose now that the blocks of $\approx$ have size seven. Since $G(x)$ is transitive on the triangles which contain $x$, it must be that the two blocks divide each triangle into two and for $u \in \Gamma(x)$ one of $u \approx y$ or $u \approx z$ holds. This means that $\la e_x, e_u \mid u \in \Gamma(x) \ra = \la e_x, e_y, e_z \ra = E_T$, a contradiction to Lemma~\ref{lem:lem1}. Hence the blocks for $\approx$ have size two. Let $B$ be a block containing $y$. Since $|G(x) : G(x)_B|=7$ we see that $Q(x) \leqslant G(x)_B$ and since $G(T)$ fixes $y$ we have $G(T) \leqslant G(x)_B$. Now $G(x)_B \geqslant Q(x)G(T)$ and so Lemma~\ref{lem:fact of gxt} shows that  $G(x)_B = G(x) \cap G\{T\}$. Hence the relation $\approx$ is the same as the relation ``in a triangle". Thus $y \approx z$ and by Lemma~\ref{lem:distinct} we have $e_x e_y = e_z$, which is (2).

Now if $u,v \in \Gamma(x)$ are such that $e_u = e_v$ then $u \approx v$ which means that $u$ and $v$ are in some triangle, $S$ say. Since there is $g\in G(x)$ with $S^g = T$ this means $e_y = e_z$, a contradiction to Lemma~\ref{lem:distinct}. Thus (3) holds. 

As argued above, it follows immediately from (3) that (4) holds. For (5) we let $\mathcal T$ be the set of triangles containing $x$. By (4) for each $u \in \Gamma(x)$ we let $u'$ be the unique vertex in $\Gamma(x)$ distinct from $u$ such that $u \approx u'$. Then we may define $\phi : E_x/\la e_x \ra^\# \rightarrow \mathcal T$
by $$ \phi : \la e_u, e_x \ra / \la e_x \ra \mapsto \{ x, u, u' \}.$$
Since $G(x)$ preserves the set of triangles and $\la e_u, e_x \ra = \la e_{u'},e_x \ra$ for all $u\in \Gamma(x)$ it follows that $\phi$ is a well defined $G(x)$-invariant map.
\end{proof}

In the next lemma, the natural homomorphism $\alpha:G(x)\rightarrow\Aut(E_x)$ is the homomorphism induced by
the conjugation action of $G(x)$ on $E_x$.

\begin{lemma}
\label{lem:dualaction}
Let $\alpha:G(x)\rightarrow \Aut(E_x)$ be the natural
homomorphism. Then
\begin{itemize}
\item[(1)] $\ker(\alpha)=E_x$;
\item[(2)] $\im (\alpha)$ is the stabiliser in $GL_4(2)$ of the 1-space $\langle e_x\rangle$ and
$\alpha(Q(x))$ is the group of transvections of $E_x$ with axis $e_x$;
\item[(3)] $E_x/\la e_x\ra= Q_1(x)/Q_2(x)$ and 
$Q(x)/Q_1(x)\cong Im(\alpha)$ are dual as modules for 
$G(x)/Q(x)\cong L_3(2)$;
\item[(4)] $Q(x)$ is extraspecial of plus type and with centre 
$Q_2(x)=\la e_x\ra$.
\end{itemize}
\end{lemma}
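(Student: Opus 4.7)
I would begin with part (1), on which the rest of the lemma depends. Since $E_x$ is abelian we have $E_x \leqslant \ker(\alpha)$, and since $G(x)/Q(x) \cong L_3(2)$ acts faithfully on the nonzero vectors of $E_x/\la e_x \ra$ (by Lemma \ref{lem:e_x}(5), this action is equivalent to the action on the $7$ triangles containing $x$), we have $\ker(\alpha) \leqslant Q(x)$. To conclude $\ker(\alpha) \cap Q(x) \leqslant E_x = Q_1(x)$, I would argue combinatorially: any $q \in Q(x) \setminus Q_1(x)$ fixes every triangle at $x$ setwise but not pointwise, so there is a triangle $\{x, u, u'\}$ on which $q$ swaps $u$ and $u'$. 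Then $q$ conjugates the characteristic subgroup $Q_2(u) = \la e_u \ra$ of $G(u)$ onto $Q_2(u') = \la e_{u'} \ra$; since $e_u \neq e_{u'}$ by Lemma \ref{lem:e_x}(3), this gives $[q, e_u] \neq 1$, contradicting $q \in C_{G(x)}(E_x)$.

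Parts (2) and (3) then follow by order counting and a duality argument. Using (A7), $|G(x)/E_x| = 1344$, which equals the order of the stabiliser of a $1$-space in $\GL_4(2)$; since $e_x$ is central in $G(x)$, $\im(\alpha)$ lies in this stabiliser and hence equals it. The subgroup $\alpha(Q(x))$ has order $|Q(x)/E_x| = 8$ and lies in the group of transvections with axis $\la e_x \ra$ (which also has order $8$), because $Q(x)$ centralises both $\la e_x \ra$ and, by Lemma \ref{lem:e_x}(5), $E_x/\la e_x \ra$. For (3), the canonical isomorphism $Q(x)/E_x \cong \alpha(Q(x)) \cong \mathrm{Hom}(E_x/\la e_x \ra, \la e_x \ra)$ exhibits $Q(x)/Q_1(x)$ as the dual of $E_x/\la e_x \ra$ over $\mathbb{F}_2[G(x)/Q(x)]$, since conjugating a transvection by $g \in G(x)$ pre-composes its defining functional by $g^{-1}$.

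For (4), the identification $Z(Q(x)) = \la e_x \ra$ follows from (1): $Z(Q(x)) \leqslant C_{Q(x)}(E_x) = E_x$, and within $E_x$ the elements fixed by the full group of transvections with axis $\la e_x \ra$ form precisely the line $\la e_x \ra$ (the intersection of all hyperplanes of $E_x$ containing $\la e_x \ra$). The remaining content is $[Q(x),Q(x)] = \la e_x \ra$ together with $Q(x)/\la e_x \ra$ being elementary abelian; once these are in place, plus-type follows because $E_x \cong C_2^4$ realises the maximal elementary abelian rank in $2^{1+6}_+$. The main obstacle is ruling out $[Q(x),Q(x)] = E_x$: the commutator descends to an $L_3(2)$-equivariant alternating bilinear form $Q(x)/E_x \times Q(x)/E_x \to E_x/\la e_x \ra$, which by (3) is a map $\Lambda^2 V^* \to V$ for $V = E_x/\la e_x \ra$ the natural module. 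As $\Lambda^2 V^* \cong V$ over $\mathbb{F}_2$ (since $\det V$ is trivial) and $V$ is absolutely irreducible, Schur's lemma leaves only two $G(x)$-equivariant possibilities, namely zero or an isomorphism; eliminating the isomorphism (and similarly showing that the induced squaring map $Q(x)/E_x \to E_x/\la e_x \ra$ vanishes) will require a direct computation with explicit elements of $Q(x)$, likely produced from the action on $\Gamma_2(x)$ or by invoking the normal subgroup structure of $G\{T\}$ with its subgroups $F$ and $N$.
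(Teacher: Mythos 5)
Your arguments for parts (1)--(3) are correct and essentially the route the paper takes: (1) is the paper's observation that conjugation on $E_x^{\#}=\{e_x\}\cup\{e_u\mid u\in\Gamma(x)\}$ is equivalent to the action on $\Gamma(x)\cup\{x\}$, so $\ker(\alpha)=Q_1(x)=E_x$; (2) is the same order count against the point stabiliser $C_2^3\rtimes L_3(2)$ in $\GL_4(2)$, with $\alpha(Q(x))$ landing in the transvection kernel by Lemma~\ref{lem:e_x}(5); and your explicit identification $\alpha(Q(x))\cong\mathrm{Hom}(E_x/\la e_x\ra,\la e_x\ra)$ makes precise what the paper dismisses as ``a straightforward computation'' for (3). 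Your derivation of $\mathrm Z(Q(x))=\la e_x\ra$ as the fixed space of the full transvection group is also fine (the paper instead uses that $\la e_x\ra$ and $E_x$ are the only candidate submodules), and your remark that plus type then follows from the existence of the elementary abelian $E_x\cong C_2^4$ is correct.

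The genuine gap is in part (4), and you have named it yourself: you never establish that $\Phi(Q(x))=[Q(x),Q(x)]=\la e_x\ra$, i.e.\ that $Q(x)/\la e_x\ra$ is elementary abelian. Your diagnosis of why this is hard is exactly right --- since $\Lambda^2V^*\cong V$ for the natural $L_3(2)$-module $V$ over $\GF(2)$, Schur's lemma permits a nonzero equivariant commutator map, and likewise an equivariant squaring map $V^*\to V$ could a priori survive --- so the step cannot be waved away by module-theoretic generalities, and deferring it to ``a direct computation'' leaves the lemma unproved. The paper closes precisely this hole by quoting an external result: it applies \cite[Lemma 3.4]{triextraspecial} (Ivanov--Shpectorov, \emph{Tri-extraspecial groups}) to the group $Q(x)/\la e_x\ra$, which has the normal subgroup $E_x/\la e_x\ra\cong V$ with quotient $Q(x)/E_x\cong V^*$ acted on dually by $L_3(2)$, and that lemma asserts such a group must be elementary abelian. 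So to complete your proof you must either cite that result or actually carry out the elimination of the nontrivial commutator/squaring maps (e.g.\ by producing commuting coset representatives from the geometry, or by the kind of analysis done in the cited paper); as written, the extraspecial structure of $Q(x)$ is asserted rather than proved.
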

\begin{proof}
Since $E_x$ is abelian we have $E_x\leqslant \ker(\alpha)$.  As the action of
$G(x)$ on $E_x\backslash\{1\}$ is equivalent to its action on
$\Gamma(x)\cup \{x\}$, it follows
that $\ker(\alpha)=Q_1(x)=E_x$.  Hence (1) holds. Moreover, $\langle e_x\rangle=Q_2(x)\norml G(x)$ and so
$\im (\alpha)$ is contained in the stabiliser $S$ in $\GL_4(2)$ of $\langle e_x\rangle$. Now $S=C_2^3\rtimes
L_3(2)$ and using the First Isomorphism Theorem, $\im (\alpha)=S$. Since $Q(x)$ acts
trivially  on the set of triangles containing $x$, $Q(x)$ centralises
$E_x/\la e_x\ra$ by Lemma \ref{lem:e_x}(5). The kernel
of $S$ on $E_x/\la e_x \ra$ is $C_2^3$, which gives (2). For (3) a straightforward computation shows that
the actions of $G(x)/Q(x)$ on $E_x/\la e_x\ra$ and $Q(x)/E_x \cong \alpha(Q(x))$ are dual. 

Now $Q(x)$ is nonabelian as $Q(x)$ acts nontrivially on $\Gamma(x)$ and hence on $E_x$.   Moreover,
$E_x/\langle e_x\rangle$ is a normal subgroup of order $2^3$ of $Q(x)/\langle e_x\rangle$ and $L_3(2)$ acts
as a group of automorphisms of $Q(x)/\langle e_x\rangle$ so that it acts dually on $Q(x)/E_x$ and
$E_x/\langle e_x\rangle$. Thus by \cite[Lemma 3.4]{triextraspecial}, $Q(x)/\langle e_x\rangle$ is elementary
abelian. Now $Z(Q(x))$ is contained in $E_x$ and the action of $L_3(2)$ tells us that $Z(Q(x))=\langle
e_x\rangle$ or $E_x$. Since $Q(x)$ does not centralise $E_x$ we have that $Z(Q(x))=\langle e_x\rangle$.
Hence  $Q(x)$ is extraspecial and since $Q(x)$ contains the elementary abelian subgroup $E_x\cong C_2^4$, it
follows that $Q(x)$ is of plus type.
\end{proof}

At this stage we can say that $G(x)$ is an extension of $2_+^{1+6}$ by
$L_3(2)$. To determine the isomorphism type of $G(x)$ we need to
determine the extension involved.

\begin{lemma}
\begin{itemize}
\label{lem:ET}
\item[(1)] $G(T)$ induces an irreducible action of $S_3$ on 
$E_x/E_T \cong C_2^2$.
\item[(2)] $E_y\cap Q(x)=E_T$.
\end{itemize}
\end{lemma}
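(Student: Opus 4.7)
To prove (1), I would identify the action of $G(T)$ on $E_x/E_T$ with the natural action of a parabolic subgroup of $L_3(2)$ on a $2$-dimensional quotient. The subgroup $G(x)\cap G\{T\}$ stabilises $T=\{x,y,z\}$ setwise and therefore also stabilises $\{e_x,e_y,e_z\}$ setwise, so it normalises $E_T$ and hence acts on $E_x/E_T$. By Lemma~\ref{lem:dualaction}(2), $Q(x)$ centralises $E_x/\langle e_x\rangle$ and so lies in the kernel of this action; thus the action factors through $(G(x)\cap G\{T\})/Q(x)$, which by Lemma~\ref{lem:e_x}(5) together with Lemma~\ref{lem:dualaction} is the stabiliser in $L_3(2)=\GL(E_x/\langle e_x\rangle)$ of the 1-space $E_T/\langle e_x\rangle$. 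This parabolic is isomorphic to $S_4$, and its natural action on the $2$-dimensional quotient $E_x/E_T$ has kernel the unipotent radical $V_4$ and image $\GL_2(2)=S_3$, which is irreducible. By Lemma~\ref{lem:fact of gxt}, $G(T)\cdot Q(x)=G(x)\cap G\{T\}$, so $G(T)$ surjects onto this $S_4$ and, composing, onto $S_3$.

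For (2), I would first note that $E_y=Q_1(y)$ fixes $\Gamma(y)\cup\{y\}\supseteq\{x,y,z\}$ pointwise, so $E_y\leqslant G(T)\leqslant G(x)$. The inclusion $E_T\leqslant E_y\cap Q(x)$ then follows by applying Lemma~\ref{lem:e_x}(4) at $y$ to obtain $e_x,e_z\in E_y$, together with $E_T\leqslant E_x\leqslant Q(x)$. For the reverse inclusion, the strategy is to exploit that $E_y$ is normal in $G(y)$ and hence in $G(T)\leqslant G(y)$; so the image of $E_y$ in $(G(x)\cap G\{T\})/Q(x)\cong S_4$ (which is surjected onto by $G(T)$ as in the proof of (1)) is a normal elementary abelian subgroup of $S_4$, and therefore equals either $\{1\}$ or $V_4$. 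If this image is $V_4$, then $|E_y:E_y\cap Q(x)|=4$, yielding $|E_y\cap Q(x)|=4=|E_T|$ and hence $E_y\cap Q(x)=E_T$ by the already established inclusion.

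The main obstacle is to rule out the case $E_y\leqslant Q(x)$. Here I would use the extraspecial structure from Lemma~\ref{lem:dualaction}(4): the quotient $Q(x)/\langle e_x\rangle$ carries a non-degenerate symplectic form over $\mathbb{F}_2$ of dimension~$6$ induced by the commutator, and $E_x/\langle e_x\rangle$ is a $3$-dimensional totally isotropic subspace (since $E_x$ is abelian), hence a maximal isotropic equal to its own orthogonal. Under the assumption $E_y\leqslant Q(x)$, we obtain $[E_y,E_x]\leqslant [Q(x),Q(x)]=\langle e_x\rangle$, so $E_y/\langle e_x\rangle$ is contained in $(E_x/\langle e_x\rangle)^\perp=E_x/\langle e_x\rangle$; by order this forces $E_y=E_x$. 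But $E_x=Q_1(x)$ and $E_y=Q_1(y)$, so by arc-transitivity (A3) the equality $Q_1(x)=Q_1(y)$ propagates to $Q_1(x)=Q_1(y')$ for every $y'\in\Gamma(x)$, so $Q_1(x)$ fixes pointwise every vertex at distance at most $2$ from $x$, forcing $Q_1(x)\leqslant Q_2(x)=\langle e_x\rangle$ and contradicting $|Q_1(x)|=16$.
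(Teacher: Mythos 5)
Part (1) of your argument is correct and is in substance what the paper intends (the paper simply cites Lemma~\ref{lem:dualaction}(3)); your elaboration via the point stabiliser $S_4=(G(x)\cap G\{T\})/Q(x)$ and Lemma~\ref{lem:fact of gxt} is fine. In part (2) the containment $E_T\leqslant E_y\cap Q(x)$ and the reduction to the dichotomy ``$E_y\cap Q(x)=E_T$ or $E_y\leqslant Q(x)$'' are also sound; the paper reaches the same dichotomy by applying the irreducibility from (1) at the vertex $y$ to the $G(T)$-invariant subgroup $E_y\cap Q(x)$, whereas you use the classification of normal elementary abelian subgroups of $S_4$ --- both work.

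The gap is in ruling out $E_y\leqslant Q(x)$. Writing $\overline{\phantom{a}}$ for reduction modulo $\langle e_x\rangle$, you deduce from $[E_y,E_x]\leqslant[Q(x),Q(x)]=\langle e_x\rangle$ that $\overline{E_y}\leqslant\overline{E_x}^{\perp}$. That inference is false: with respect to the commutator form on $\overline{Q(x)}$, membership of $\overline{a}$ in $\overline{E_x}^{\perp}$ means $[a,E_x]=1$, not merely $[a,E_x]\leqslant\langle e_x\rangle$; the latter holds for \emph{every} element of $Q(x)$, since $\langle e_x\rangle$ is the entire derived subgroup. Indeed $\overline{E_x}^{\perp}=\overline{E_x}$ is only $3$-dimensional inside the $6$-dimensional space $\overline{Q(x)}$, and $Q(x)\cong 2^{1+6}_+$ contains several maximal elementary abelian subgroups of order $2^4$ other than $E_x$ (for instance the subgroup $N$ of Lemma~\ref{lem:N}), so ``elementary abelian of order $2^4$ containing $e_x$ and contained in $Q(x)$'' does not force equality with $E_x$. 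The missing ingredient is the symmetry the paper exploits: since $G\{T\}$ is $2$-transitive on $T$, the hypothesis $E_y\leqslant Q(x)$ forces $E_x\leqslant Q(y)$ as well, whence $E_xE_y\leqslant Q(x)\cap Q(y)$ and $[E_xE_y,E_xE_y]\leqslant \langle e_x\rangle\cap\langle e_y\rangle=1$. Only now is $E_xE_y$ an elementary abelian subgroup of $Q(x)$, and maximality of $E_x$ gives $E_xE_y=E_x$, i.e.\ $E_y=E_x$, which your final paragraph (or Lemma~\ref{lem:lem1} applied to $Q_1(x)$) correctly turns into a contradiction.
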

\begin{proof}
Part (1) follows from Lemma \ref{lem:dualaction}(3). 
By definition we have $E_T\leqslant E_y$ and so
$E_T\leqslant E_y\cap Q(x)$. Suppose equality does not hold and recall that $E_y \cong C_2^4$ and $E_T \cong C_2^2$. Since
$G(T)$ normalises $E_y\cap Q(x)$ and $E_y \neq E_T$, part (1) implies that 
 $E_y = E_y \cap Q(x)$. By symmetry we have
$E_x\leqslant Q(y)$. Thus $E_xE_y\leqslant Q(x)\cap Q(y)$. Since
$Q(x)$ and $Q(y)$ are extraspecial with 
derived subgroups $\la e_x\ra$ and $\la e_y\ra$ respectively, we have
$[E_xE_y,E_xE_y]\leqslant \la e_x\ra \cap \la e_y\ra=1$. Thus
$E_xE_y$ is an abelian subgroup of $Q(x)$. However, $E_x$ is a
maximal elementary abelian subgroup of $Q(x)$ and so $E_xE_y=E_x$
and hence $E_y=E_x$. This contradicts Lemma \ref{lem:lem1} for
$Q_1(x)$. Thus $E_y\cap Q(x)=E_T$.
\end{proof}

\begin{lemma}
\label{lem:N}
The following hold:
\begin{itemize}
\item[(1)] $N = Q(x) \cap Q(y) = Q(x) \cap Q(z) = Q(y) \cap Q(z)$ and $|Q(x) \cap G(T)|=2^6$;
\item[(2)] $|N| = 2^4$ and $E_T < N$;
\item[(3)] $N$ is a maximal elementary abelian subgroup of $Q(x)$.
\end{itemize}
\end{lemma}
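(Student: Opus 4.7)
Plan:

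I would tackle the three parts in order, observing that $(3)$ follows immediately from $(2)$: any elementary abelian subgroup of $Q(x)\cong 2^{1+6}_+$ of order $2^4$ is automatically maximal, since the maximal elementary abelian subgroups of $2^{1+6}_+$ have order $2^4$ (as witnessed by $E_x$ in Lemma~\ref{lem:e_x}(4)). For $|Q(x) \cap G(T)|=2^6$ I would combine Lemma~\ref{lem:fact of gxt}'s factorisation $G(x) \cap G\{T\} = Q(x)G(T)$ with the indices $|G(x):G(x) \cap G\{T\}|=7$ (from (A4), $G(x)$ acting on the $7$ triangles through $x$) and $|G\{T\}:G(T)|=6$ (from (A5)); the second isomorphism theorem then yields
\[ |Q(x) \cap G(T)| = \frac{|Q(x)|\cdot|G(T)|}{|Q(x)G(T)|} = 2^6, \]
so $Q(x) \cap G(T)$ has index $2$ in $Q(x)$: an element of $Q(x)$ fixes $y$ if and only if it fixes $z$.

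Next I would record that $N$ is elementary abelian with $E_T \leq N$ and $|N|\leq 2^4$. For $a,b\in N$, the commutator $[a,b]$ lies in $[Q(x),Q(x)]\cap[Q(y),Q(y)]=\langle e_x\rangle\cap\langle e_y\rangle$, which is trivial by Lemma~\ref{lem:distinct} together with the extraspecial structure of Lemma~\ref{lem:dualaction}(4); the same argument shows $a^2=1$. Each $e_u$ with $u\in T$ lies in $Q_2(u)$ and hence, by Lemma~\ref{lem:e_x}(1), in $Q(v)$ for every $v\in T$, giving $E_T\leq N$; and $N\leq Q(x)$ forces $|N|\leq 2^4$.

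The bulk of the work lies in the pairwise equalities, and my recurring tool would be that the only normal $2$-subgroups of $S_4$ are $\{1\}$ and $V_4$. I would apply this first to $Q(x) \cap G(T)$, which is normal in $G(T)$ (as $Q(x)$ is normal in $G(x)\supseteq G(T)$), projecting into $G(y)/Q(y)\cong L_3(2)$. By symmetry with the computation at $x$, the $G(T)$-image in $L_3(2)$ is the stabiliser $S_4$ of $T$, so the image of $Q(x) \cap G(T)$ is $\{1\}$ or $V_4$. Triviality would give $Q(x) \cap G(T)\leq Q(y)$ and, by the analogous statement at $z$, $Q(x) \cap G(T)\leq N$, contradicting $|Q(x) \cap G(T)|=2^6>2^4\geq|N|$. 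Hence the image is $V_4$ and $|Q(x) \cap Q(y)|=2^4$; by the $G\{T\}$-symmetry, all three pairwise intersections have order $2^4$. I would then apply the same dichotomy to $Q(x) \cap Q(y)$, normal in $G(T)$ (both $Q(x)$ and $Q(y)$ being normal there), projecting to $G(z)/Q(z)$: a trivial image immediately yields $Q(x) \cap Q(y)\leq Q(z)$ and hence $Q(x) \cap Q(y)=N$ of order $2^4$.

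The main obstacle is ruling out the $V_4$ alternative in this last step, which would force $|N|=4$ and $N=E_T$. My approach would be to work in the quadratic space $\bar K := K/\langle e_x\rangle$ inherited from the extraspecial form on $Q(x)/\langle e_x\rangle$, where $K := Q(x) \cap G(T)$. The maximal elementary abelian subgroups of $K$ of order $2^4$ come in two Witt families of three each, and $E_x$, $Q(x) \cap Q(y)$ and $Q(x) \cap Q(z)$ all lie in the family of $E_x$ (each of the latter two meets $E_x$ in $E_T$, a $1$-dimensional intersection modulo $\langle e_x\rangle$). Since $Q(x) \cap Q(y)$ is $G(T)$-invariant, distinct from $E_x$, and in this family, the $G(T)$-action on the two non-$E_x$ members of the family must be trivial, so both are individually $G(T)$-invariant. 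An involution $\tau\in G\{T\}$ swapping $y$ and $z$ then normalises $Q(x)$ (as $\tau$ fixes $x$), permutes the $G(T)$-fixed pair, and sends $Q(x) \cap Q(y)$ to $Q(x) \cap Q(z)$, forcing the two to coincide. Hence $N=Q(x) \cap Q(y)$ has order $2^4$, as required.
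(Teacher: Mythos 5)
Most of your proposal is sound: the order computation $|Q(x)\cap G(T)|=2^6$ via the product formula, the elementary-abelianity of $N$ from $\Phi(Q(x))\cap\Phi(Q(y))=\langle e_x\rangle\cap\langle e_y\rangle=1$, the reduction of $(3)$ to $(2)$, and the repeated use of ``normal $2$-subgroups of $S_4$ are $1$ or $V_4$'' to get $|Q(x)\cap Q(y)|=2^4$ all work (the paper gets the same numbers slightly differently, via $x^{Q(y)}=\{x,z\}$ and the comparison of $E_yQ(x)$ with $(Q(y)\cap G(x))Q(x)$ modulo $Q(x)$). The genuine gap is in the step you yourself flag as the main obstacle. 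Having shown that $G(T)$ fixes each of the two non-$E_x$ members $A,B$ of the Witt family, you take $\tau\in G\{T\}$ swapping $y$ and $z$ and conclude that since $\tau$ ``permutes the $G(T)$-fixed pair'' and sends $Q(x)\cap Q(y)$ to $Q(x)\cap Q(z)$, the two must coincide. This does not follow: $\tau\notin G(T)$, so the $G(T)$-invariance of $A$ and $B$ says nothing about how $\tau$ acts on $\{A,B\}$, and the scenario $Q(x)\cap Q(y)=A$, $Q(x)\cap Q(z)=B$ with $\tau$ interchanging them is not excluded by anything you have written. Indeed, in Notation~\ref{notnforgx} the third member of the family is $W_1=\langle a_7,a_4,a_8,a_9\rangle$, which is likewise $(G(x)\cap G\{T\})$-invariant, so no invariance property distinguishes the two candidates.

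The repair is short and makes the entire Witt-family analysis unnecessary; it is exactly the paper's argument. Since $|Q(x):Q(x)\cap G(T)|=2$ (which you have already proved), $Q(x)$ is transitive on $T\setminus\{x\}$, so the swapping element $t$ can be chosen \emph{inside} $Q(x)$. As $Q(x)/\langle e_x\rangle$ is abelian and $\langle e_x\rangle\leqslant Q(x)\cap Q(y)$, the subgroup $Q(x)\cap Q(y)$ is normal in $Q(x)$ and hence fixed by conjugation by $t$; on the other hand $(Q(x)\cap Q(y))^t=Q(x)\cap Q(z)$. This gives $Q(x)\cap Q(y)=Q(x)\cap Q(z)$ (and similarly $=Q(y)\cap Q(z)$) in one line, after which your computation $|Q(x)\cap Q(y)|=2^4$ finishes parts $(1)$ and $(2)$. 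You should make this choice of $t$ explicit; as written, an arbitrary involution of $G\{T\}$ interchanging $y$ and $z$ does not suffice.
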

\begin{proof}
Since $Q(x)$ acts transitively on $T - \{x\}$ there is $t\in Q(x)$ interchanging $y$ and $z$.  Since $Q(x) / \la e_x \ra$ is abelian, every subgroup of $Q(x)$ that contains $\la e_x \ra$ is normal in $Q(x)$. 
We have $e_x \in Q(x) \cap Q(y)$, hence 
$$Q(x) \cap Q(y) = (Q(x) \cap Q(y))^t = Q(x) \cap Q(z).$$
The same argument shows that $Q(y) \cap Q(z) = Q(x) \cap Q(z)$.

Since $x^{Q(y)}=\{x,z\}$ is of size two, it follows that 
$$|Q(y) \cap G(T)|=|Q(y)\cap G(x)|=2^6.$$
and so (1) holds.

Since $G(x) \cap G\{T\} = Q(x) G(T)$ we see that both $E_yQ(x)$ and $(Q(y) \cap G(x)) Q(x)$ are normal $2$-subgroups of $G(x) \cap G\{T\}$. Since $(G(x) \cap G\{T\})/Q(x) \cong S_4$ we have $E_y Q(x) = (Q(y)\cap G(x))Q(x)$ and
$$|E_yQ(x)/Q(x)| = 2^2 = |Q(y) \cap G(x) : Q(y) \cap G(x) \cap Q(x)|.$$
Plainly $Q(y) \cap G(x) \cap Q(x) = Q(y) \cap Q(x) = N$ and therefore  $|N|=2^4$. The second assertion of (2) is immediate.

We  have that $Q(x) \cong 2_+^{1+6}$ and by (2) $|N|=2^4$,  so for (3) we just need to see that $N$ is elementary abelian. Observe that 
 $$\Phi(Q(x))\Phi(Q(y)) = \la e_x, e_y \ra \leqslant Q(x) \cap Q(y),$$ and therefore 
$\Phi(Q(x) \cap Q(y)) \leqslant \Phi(Q(x)) \cap \Phi(Q(y)) = 1$
and so the result follows from (1).
\end{proof}

The next three lemmas expose detailed structure of $G\{T\}$. Surprisingly the outcome of these results is
not
the identification of $G\{T\}$ but rather of $G(x)$, which we complete in Lemma~\ref{lem:qx/ex is
semisimple}
 and Proposition~\ref{prn:S}.

\begin{lemma}
\label{lemma:omnibus gt}
The following hold.
\begin{itemize}
\item[ (1) ] $F =\mathrm O_2(G(T))=(Q(x) \cap G(T))E_y$.
\item[ (2) ] $\mathrm C_{G\{T\}} (F) = E_T$ is an irreducible $G\{T\}$-module.
\item[ (3) ] $E_T=\Phi( F ) = [F,F]=\mathrm Z(F)$, in particular, $F$ is a special 2-group.
\item[ (4) ] $G\{T\}/ \mathrm F \cong S_3 \times S_3$.
\item[ (5) ] $\mathrm C_{G\{T\}} (\mathrm F  / E_T ) = \mathrm F$.
\item[ (6) ] $\mathrm C_{G\{T\}} ( N/E_T) / F \cong S_3$  and  $\mathrm C_{G\{T\}} ( N/E_T)
\cap G(T) = F$. In particular, $N/E_T$ is irreducible as a $G(T)$-module.
\item[ (7) ] Write $\overline F  = F/ E_T$. Then $\overline F = \overline {E_x} \oplus  \overline {E_y}
\oplus \overline N$ as a $G(T)$-module.
\item[ (8) ] $\mathrm C_{G\{T\}} (N) = \mathrm C_F (N)$.
\item [ (9) ] $\mathrm C_{G\{T\}} ( F/N) = F$.
\end{itemize}
\end{lemma}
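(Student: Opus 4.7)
The plan is to prove the nine parts in the order (1), (7), (3), then (4)--(9), exploiting a short chain of dependencies: (1) fixes $|F|=2^8$, (7) gives the direct sum decomposition of $\overline F$, (3) follows from (7) via commutator analysis, and the remaining parts are consequences of the $G\{T\}/F \cong S_3 \times S_3$ action on $\overline F$ together with standard module-theoretic arguments.

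For (1), I first observe that $\mathrm O_2(G\{T\}) = \mathrm O_2(G(T)) = F$ since $G\{T\}/G(T) \cong S_3$ has no nontrivial normal $2$-subgroup. Both $Q(x) \cap G(T)$ and $E_y$ are normal $2$-subgroups of $G(T)$ (the former because $Q(x) \triangleleft G(x) \supseteq G(T)$, the latter because $E_y \triangleleft G(y) \supseteq G(T)$), so their product lies in $F$. For the reverse inclusion, Lemma~\ref{lem:fact of gxt} combined with $(G(x) \cap G\{T\})/Q(x) \cong S_4$ (the stabiliser of the triangle $T$ in the $L_3(2)$-action of $G(x)/Q(x)$ on the seven triangles through $x$) gives $|G(T)| = 2^9 \cdot 3$. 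Together with $|Q(x) \cap G(T)| = 2^6$ from Lemma~\ref{lem:N}, this yields $G(T)/(Q(x) \cap G(T)) \cong S_4$ and $F/(Q(x) \cap G(T)) \leqslant \mathrm O_2(S_4) \cong V_4$, so $|F| \leqslant 2^8$. A direct count gives $|(Q(x) \cap G(T))E_y| = 2^6 \cdot 2^4/2^2 = 2^8$, using $E_y \cap Q(x) = E_T$ from Lemma~\ref{lem:ET}(2).

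For (7) and (3) together: the subgroups $E_x, E_y, N$ are abelian, normal in $G(T)$, and contained in $F$. Lemma~\ref{lem:ET}(2), applied symmetrically, gives $E_x \cap E_y = N \cap E_x = N \cap E_y = E_T$. The observation that $ee' \in N$ with $e \in E_x$, $e' \in E_y$ forces $e' \in E_y \cap Q(x) = E_T$ yields $N \cap E_xE_y = E_T$, so $|E_xE_yN| = 2^8 = |F|$ and $\overline F = \overline{E_x} \oplus \overline{E_y} \oplus \overline N$ as claimed in (7). Part (3) follows: $E_T \leqslant \mathrm Z(F)$ because each $e_u$ (for $u \in T$) generates $Q_2(u)$, which is central in $G(u) \supseteq F$; $[F,F] \leqslant E_T$ because $F = E_xE_yN$ and the off-diagonal commutators satisfy $[E_x, E_y] \leqslant E_x \cap E_y = E_T$ (by normality), $[E_x, N] \leqslant \langle e_x \rangle$, and $[E_y, N] \leqslant \langle e_y \rangle$ (from extraspeciality of $Q(x)$ and $Q(y)$); the reverse $E_T \leqslant [F,F]$ follows since $Q(u) \cap G(T)$ is a non-abelian index-two subgroup of $Q(u)$ (maximal elementary abelians in $Q(u)$ have order $2^4$), so has commutator $\langle e_u \rangle$. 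Then $\Phi(F) = E_T$ follows from the squaring formula in $F/E_T$ (which is abelian, with elements of $E_xE_y$ squaring to $1$ and elements of $N \leqslant Q(x)$ squaring into $\langle e_x \rangle$), and $\mathrm Z(F) = E_T$ by non-degeneracy of the commutator form $\overline F \times \overline F \to E_T$, using that $E_x \ne N$ forces $[E_x, N] = \langle e_x \rangle$ and symmetrically $[E_y, N] = \langle e_y \rangle$, so both direct factors of $E_T$ appear in the image of the pairing.

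Parts (4)--(9) are structural corollaries. For (4), $G(T)/F \cong S_4/V_4 \cong S_3$ and $G\{T\}/G(T) \cong S_3$; the extension splits as a direct product via a Levi-type complement to the $S_4$-parabolic. For (5), $F \leqslant \mathrm C_{G\{T\}}(F/E_T)$ by $[F,F] \leqslant E_T$, and equality follows from faithfulness of the $G\{T\}/F$-action on $\overline F$: elements of $G\{T\} \setminus G(T)$ permute $\{\overline{E_x}, \overline{E_y}, \overline{E_z}\}$ nontrivially, while elements of $G(T) \setminus F$ act nontrivially on $\overline{E_x}$ by Lemma~\ref{lem:ET}(1). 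Then (2) follows: $\mathrm C_{G\{T\}}(F) \cap F = \mathrm Z(F) = E_T$ by (3), and the image of $\mathrm C_{G\{T\}}(F)$ in $G\{T\}/F$ is trivial by (5), so $\mathrm C_{G\{T\}}(F) = E_T$; irreducibility of $E_T$ as a $G\{T\}$-module comes from the standard $S_3$-permutation module structure on $\{e_x, e_y, e_z\}$ modulo the relation $e_xe_ye_z = 1$. Parts (6), (8), and (9) follow by tracking which of the two $S_3$ factors of $G\{T\}/F$ acts trivially or faithfully on the sections $\overline N$, $N$, and $F/N \cong \overline{E_x} \oplus \overline{E_y}$, using the decomposition from (7).

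The main obstacle is the coordinated proof of (7) and (3), and in particular establishing $\mathrm Z(F) = E_T$ via non-degeneracy of the commutator form, which requires combining the module decomposition of (7) with the computations $[E_x, N] = \langle e_x \rangle$ and $[E_y, N] = \langle e_y \rangle$. The key insight making the commutator analysis tractable is the decomposition $F = E_xE_yN$: a direct attempt bounds $[Q(x) \cap G(T), E_y]$ only by $N$ (not by $E_T$), so without passing through this decomposition the commutator analysis fails to close.
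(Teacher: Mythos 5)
Your route is genuinely different from the paper's and most of it is sound: you prove (7) and (3) first by decomposing $F=E_xE_yN$ and computing commutators directly, whereas the paper proves (2) first (via $\mathrm C_{G(x)}(E_x)=E_x$ and $\mathrm C_{G(y)}(E_y)=E_y$) and then deduces (3) from the Frattini identity $\Phi(AB)=\Phi(A)\Phi(B)[A,B]$ together with the irreducibility of $E_T$; and your permutation argument for (5) and (9) (elements of $G\{T\}\setminus G(T)$ move the subspaces $\overline{E_u}$, elements of $G(T)\setminus F$ act nontrivially on $E_x/E_T$ by Lemma~\ref{lem:ET}(1)) is a clean alternative to the paper's coprime-action and faithfulness arguments. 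However, two steps are asserted rather than proved. First, in your derivation of $\mathrm Z(F)=E_T$: surjectivity $[E_x,N]=\la e_x\ra$ does not give non-degeneracy of the pairing $\overline{E_x}\times\overline N\to\la e_x\ra$ --- a pairing $C_2^2\times C_2^2\to C_2$ can be onto while having nontrivial radicals on both sides. What you need is $\mathrm C_{E_x}(N)=\mathrm C_N(E_x)=E_T$, which follows from the self-centralisation of the maximal abelian subgroups $E_x$ and $N$ of $Q(x)$ (Lemma~\ref{lem:dualaction}(1) and Lemma~\ref{lem:N}(3)) together with $E_x\cap N=E_T$. This is a one-line fix, but it is the fix, not the statement you wrote.

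The more serious gap is part (6). Its substantive content is that $G(T)/F\cong S_3$ acts faithfully (equivalently irreducibly) on $N/E_T$; ``tracking which of the two $S_3$ factors acts faithfully on $\overline N$'' presupposes exactly this, and it does not follow from the decomposition in (7) --- a priori $G(T)$ could centralise the summand $\overline N$. The paper proves it by computing the fixed points of a Sylow $3$-subgroup $X$ of $G(T)$ on $Q(x)/E_T$ (semisimple with two nontrivial $2$-dimensional summands, so $|\mathrm C_{Q(x)/E_T}(X)|=2<|N/E_T|$). Alternatively, once the pairing $\overline{E_x}\times\overline N\to\la e_x\ra$ is shown to be non-degenerate and $G(T)$-invariant with $G(T)$ trivial on $\la e_x\ra$, you get $\overline N\cong\overline{E_x}^{\,*}$ as $G(T)$-modules and faithfulness follows from Lemma~\ref{lem:ET}(1); this would mesh naturally with your setup, but you must supply it explicitly. (Your justification of (4) via a ``Levi-type complement'' is also too vague; the standard argument is that $G(T)/F\cong S_3$ is normal in $G\{T\}/F$ with trivial centre and $\Out(S_3)=1$, so it is complemented by its centraliser.)
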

\begin{proof}
(1) We have that $G\{T\} / G(T)$ is isomorphic to $S_3$, so $\mathrm O_2(G\{T \}) \leqslant G(T)$
and  the first equality of (1) holds since $G(T)$ is normal in $G\{T\}$. For the second equality, we just calculate the orders of the subgroups involved. First note that since $G(x) \cap G\{T\} = G(T)Q(x)$ we have that 
$$G(T)/ (G(T) \cap Q(x) )\cong S_4$$
 and so $|F| = 2^2  | Q(x) \cap G(T)|$. By Lemma \ref{lem:N} (1) we have that $|Q(x) \cap G(T)|=2^6$, hence $|F|=2^8$. Now since $E_y \leqslant G(T)$ we have 
 $$E_y \cap (Q(x) \cap G(T)) = E_y \cap Q(x)$$
  and so $|E_y (Q(x) \cap G(T))| = 2^8$ by Lemma~\ref{lem:ET} (2). This completes the proof of (1).

(2) Since $E_T = \langle e_x, e_y \ra$ we see that $G\{T\}$ acts irreducibly on $E_T$.
 Let $C= \mathrm C_{ G\{T\} } (F)$. Then $C$ centralises $E_T$, so $C \leqslant G(T)$. Since 
$$E_x \leqslant Q(x) \cap G(T) \leqslant F$$
 we have $C \leqslant \mathrm C_{G(x)} (E_x) = E_x$, and in particular, $C=\mathrm Z(F)$. Similarly, $C \leqslant E_y$, so $C \leqslant E_x \cap E_y =  E_T$. Since $C$ is non-trivial and $E_T$ is an irreducible $G\{T\}$-module, we see that
$C=E_T=\mathrm Z(F)$.
 
(3) If $P$ is a $p$-group with normal subgroups $A$ and $B$ such that $P=AB$, then it can be shown that $\Phi(P)=\Phi(A)\Phi(B)[A,B]$. We use this identity and (1) to see that
$$\Phi(F) = \Phi(E_y)\Phi(Q(x) \cap G(T)) [ E_y, Q(x) \cap G(T)] \leqslant \la e_x \ra E_T = E_T.$$
This implies $[F,F] \leqslant \Phi(F) \leqslant E_T$. Since $\Phi(F)$ and $[F,F]$ are normal in $G\{T\}$,
and $E_T$ is an irreducible $G\{T\}$-module (3) holds unless $F$ is abelian.  However if $F$ is abelian then, using
$E_xE_y\leqslant F$, we see that $E_y \leqslant \mathrm C_{G(x)}(E_x) = E_x$, a contradiction. 

(4) In the proof of (1) we saw that $G(T)/F \cong S_3$. Working in $G\{T\} / F$ we see that the
centraliser of $G(T)/F$ meets $G(T)/F$ trivially, and therefore complements $G(T)/F$ in $G\{T\}/F$. Since
$(G\{T\}/F) / (G(T)/F) \cong S_3$ we obtain (4).

(5) By (3) we have that $[F,F] = E_T$, so $F \leqslant \mathrm C_{G\{T\}} (F/E_T)$. Hence if (5) doesn't hold,
then by (4) there is $D \leqslant G\{T\}$ of  order three that centralises $F/E_T$. Now (3) says that $D$
centralises $F/ \Phi(F)$ and by \cite[5.3.5, pg.180]{gorenstein} $D$ must centralise $F$ which is a contradiction to (2). Thus (5) holds.

(6) By (3) we have $[F,N] \leqslant [F,F] = E_T$ so $F \leqslant \mathrm C_{G\{T\}} ( N/ E_T)$. Since $G(T)/F$ is isomorphic to $S_3$, if $F < \mathrm C_{G(T)}(N/E_T)$ then a Sylow 3-subgroup $X$ of $G(T)$  centralises $N/E_T$.  In particular, $N/E_T \cong C_2^2$ is contained in $\mathrm C_{Q(x)/E_T}(X)$. As an $X$-module $Q(x)/E_T$ is semisimple with two 2-dimensional summands and one trivial summand, hence $|\mathrm C_{Q(x)/E_T}(X) | = 2$, a contradiction. Since $\mathrm {Aut}(N/E_T) \cong S_3$ we obtain (6) after using (4).

(7) Notice that $E_yE_x \cap N \leqslant E_y E_x \cap Q(x) = E_x(E_y \cap Q(x)) = E_x$. Similarly, we obtain
$E_y E_x \cap N \leqslant E_y$, and so $E_y E_x \cap N \leqslant E_x \cap E_y = E_T$. Thus $\overline{E_x
E_y} \cap \overline N = 1$. Now $\overline {E_x E_y} = \overline{E_x } \oplus \overline {E_y}$. All of these
spaces are $G(T)$ invariant, which yields (7).

(8) Using  $\mathrm C_{G\{T\}} (E_T) \leqslant G(T)$, part (6) and the fact that
$$\mathrm C_{G\{T\}}(N) \leqslant \mathrm C_{G\{T\}} (E_T) \cap \mathrm C_{G\{T\}}(N/E_T)$$
 we obtain $N \leqslant \mathrm C_{G\{T\}}(N) \leqslant F$. This is (8).

(9)  By part (6) we can choose $d\in G\{T \}$ of order three with $d\notin G(T)$ so that
$[N,d] \leqslant E_T$. Suppose that $d\in \mathrm C_{G\{T\}} ( F/N)$. Then $[F,d,d] \leqslant [N,d]
\leqslant E_T \leqslant E_x$. Since $E_x \leqslant F$ this implies that $d$ normalises $E_x$, a
contradiction to Lemma~\ref{lem:lem1} (since $\la G(x), d \ra = \la G(x), G\{T\} \ra=G$). Hence we have that $\mathrm C_{G\{T\}}
(F/N) \leqslant G(T)$. If there is   $d\in G(T)$ of order three so that $[F,d] \leqslant N$ then $[E_x,d]
\leqslant E_x \cap N = E_T$. Since $G(T)$ centralises $E_T$ we see that $[E_x,d,d]= [E_x,d] =1$, a
contradiction to $\mathrm C_{G(x)}(E_x) = E_x$. Hence $\mathrm C_{G\{T\} } (F/N) \leqslant F$ and since $F/N$ is abelian we see that equality holds.
\end{proof}

\begin{lemma}
\label{lem:selfcent}
The following hold:
\begin{itemize}
\item[(1)] $F/N$ is an irreducible $G\{T\}/ F$-module;
\item[(2)] $N$ is self-centralising in $G\{T\}$.
\end{itemize}

\end{lemma}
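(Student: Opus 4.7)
The plan is to prove (1) by analysing $F/N$ as a module for $G\{T\}/F\cong S_3\times S_3$ (Lemma \ref{lemma:omnibus gt}(4)), and then to deduce (2) from (1) together with parts (3) and (8) of the same lemma. Write $A:=G(T)/F$.

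For (1), I would first identify $F/N$ as an $A$-module. Since $E_T\leqslant N$, quotienting the decomposition $\overline F=\overline{E_x}\oplus\overline{E_y}\oplus\overline N$ from Lemma \ref{lemma:omnibus gt}(7) by $\overline N=N/E_T$ yields $F/N\cong \overline{E_x}\oplus \overline{E_y}$ as an $A$-module, and by Lemma \ref{lem:ET}(1) (applied to $E_x$, and by symmetry to $E_y$) each summand is the unique 2-dimensional irreducible $\mathbb{F}_2 S_3$-module $V$. Because $V$ is absolutely irreducible, $V\oplus V$ has exactly three proper nontrivial $A$-submodules, each of dimension $2$. I would then exhibit these submodules as the images in $F/N$ of $\overline{E_x}$, $\overline{E_y}$ and $\overline{E_z}$. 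Using (A5), there exists $d\in G\{T\}$ of order $3$ with $x^d=y$, $y^d=z$, $z^d=x$, and conjugation by $d$ cycles $E_x\to E_y\to E_z\to E_x$. The subgroup $E_z$ is also $G(T)$-invariant, and the $z$-analogues of Lemma \ref{lem:ET} give $E_z/E_T\cong V$ as an $A$-module together with $E_z\cap N=E_T$, so the image $U_z$ of $\overline{E_z}$ in $F/N$ is a $2$-dimensional $A$-submodule; write $U_x$, $U_y$ for the corresponding images. The direct sum decomposition $F/N=U_x\oplus U_y$ forces $U_x\neq U_y$, and the cyclic $d$-action then forces $U_x$, $U_y$, $U_z$ to be pairwise distinct. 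Hence $\{U_x,U_y,U_z\}$ is precisely the set of three proper nontrivial $A$-submodules of $F/N$, and $d$ acts on this set as a $3$-cycle. Since every $G\{T\}/F$-submodule is in particular $\langle A,d\rangle$-invariant, it follows that $F/N$ is irreducible.

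For (2), $N$ is abelian by Lemma \ref{lem:N}(3), so $N\leqslant C_F(N)$, and by Lemma \ref{lemma:omnibus gt}(8) it suffices to establish equality. Both $F$ and $N$ are normal in $G\{T\}$, hence so is $C_F(N)$; if $C_F(N)>N$, then $C_F(N)/N$ is a nonzero $G\{T\}/F$-submodule of $F/N$, and (1) forces $C_F(N)=F$. But then $[F,N]=1$ gives $N\leqslant \mathrm{Z}(F)=E_T$ by Lemma \ref{lemma:omnibus gt}(3), contradicting $|N|=2^4>2^2=|E_T|$. The only delicate step in the whole argument is verifying that $d$ cyclically permutes the three $2$-dimensional $A$-submodules of $F/N$; once that is in place, both parts follow by routine bookkeeping.
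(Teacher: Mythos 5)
Your proposal is correct and follows essentially the same route as the paper: part (1) via the decomposition $F/N\cong\overline{E_x}\oplus\overline{E_y}$ together with the third submodule $\overline{E_z}$ being permuted transitively with the other two by $G\{T\}$, and part (2) by combining (1) with parts (3) and (8) of Lemma~\ref{lemma:omnibus gt}. The only difference is that you make explicit (via absolute irreducibility of the $2$-dimensional $S_3$-module) why these three are \emph{all} the proper nontrivial submodules, a point the paper leaves implicit.
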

\begin{proof}
Set $\overline F = F/N$. By Lemma~\ref{lemma:omnibus gt} we see that $\overline F  = \overline {E_x} \oplus
\overline {E_y}$  as a $G(T)$-module. Moreover there is a third submodule $\overline {E_z}$ which (by the
2-transitive action of $G\{T\}$ on $\{x,y,z\}$) is distinct from $\overline{E_x}$ and $\overline{E_y}$. Part
(4) of Lemma~\ref{lemma:omnibus gt} shows that these three submodules are permuted transitively by $G\{T\}$,
so we conclude that $\overline F$ is an irreducible $G\{T\}$-module and part (9) of the same lemma shows
that  $\overline F$ is an irreducible $G\{T\}/F$-module.

By (1) we have $\mathrm C_F(N) = N$ or $F$, but the latter is ruled out by  part (3) of
Lemma~\ref{lemma:omnibus gt}. Now Lemma~\ref{lemma:omnibus gt} (8)  shows that $\mathrm
C_F(N) = \mathrm C_{G\{T\}}(N)$ which is (2).
\end{proof}

\begin{lemma}
\label{lem:g(t) splits}
Let $X$ be a Sylow 3-subgroup of $G\{T\}$. Then $\mathrm N_{G\{T\}}(X) \cong S_3 \times S_3$. In particular,
both $G\{T\}$ 
and $G(T)$ split over $F$.
\end{lemma}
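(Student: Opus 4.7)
The plan is to apply the Frattini argument and reduce the problem to showing $\mathrm N_F(X)=1$, where $\mathrm N_F(X):=F\cap \mathrm N_{G\{T\}}(X)$. Since $|G\{T\}/F|=36$ by Lemma~\ref{lemma:omnibus gt}(4), the Sylow 3-subgroup $X$ of $G\{T\}$ has order $9$, and its image in $G\{T\}/F\cong S_3\times S_3$ is the abelian Sylow 3-subgroup $C_3\times C_3$; hence $X\cong C_3\times C_3$ is abelian. The Frattini argument yields $G\{T\}=F\cdot\mathrm N_{G\{T\}}(X)$, so $\mathrm N_{G\{T\}}(X)/\mathrm N_F(X)\cong S_3\times S_3$. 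Showing $\mathrm N_F(X)=1$ will therefore produce a complement $\mathrm N_{G\{T\}}(X)\cong S_3\times S_3$ to $F$ in $G\{T\}$, and the intersection $\mathrm N_{G\{T\}}(X)\cap G(T)\cong S_3$ will correspondingly complement $F$ in $G(T)$.

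To establish $\mathrm N_F(X)=1$ I will first observe the general principle $\mathrm N_F(X)=\mathrm C_F(X)$, which follows from $F\norml G\{T\}$ together with $F\cap X=1$: for $f\in\mathrm N_F(X)$ and $x\in X$, the commutator $[f,x]$ lies in $F$ (because $F$ is normal in $G\{T\}$) and simultaneously in $X$ (because $f$ normalises $X$), so $[f,x]\in F\cap X=1$. Next I compute $\mathrm C_F(X)$ via coprime action on the $X$-invariant normal series $1\leqslant E_T\leqslant N\leqslant F$. By the standard identity $\mathrm C_{P/P_0}(X)=\mathrm C_P(X)P_0/P_0$ for a $p'$-group $X$ acting on a $p$-group $P$, it suffices to verify that $\mathrm C_{E_T}(X)=\mathrm C_{N/E_T}(X)=\mathrm C_{F/N}(X)=0$. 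For $E_T$: since each $\la e_u\ra\norml G(u)$ has order two, $G(T)$ centralises $E_T$, so $X$ acts on $E_T$ through $X/(X\cap G(T))\cong C_3$, irreducibly by Lemma~\ref{lemma:omnibus gt}(2). For $N/E_T$: Lemma~\ref{lemma:omnibus gt}(6) shows that $X\cap G(T)$ acts nontrivially on the 2-dimensional $N/E_T$ and hence irreducibly. For $F/N$: Lemma~\ref{lemma:omnibus gt}(7) gives the $G(T)$-module decomposition $F/N\cong(E_xN/N)\oplus(E_yN/N)$, and Lemma~\ref{lem:ET}(1) identifies the action of $X\cap G(T)$ on each summand as irreducible.

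Combining the three vanishings with the coprime-action identity yields $\mathrm C_F(X)=1$, hence $\mathrm N_F(X)=1$, and the splittings of both $G\{T\}$ and $G(T)$ over $F$ follow at once from the Frattini decomposition. The main obstacle is the coprime-action analysis of how $X$ acts on the three composition factors of $F$; this requires careful bookkeeping of the semisimple $G(T)$-module structure of $F/E_T$ together with the various irreducibility statements from Lemmas~\ref{lem:ET} and \ref{lemma:omnibus gt}, which is precisely why those structural results were established earlier in this section.
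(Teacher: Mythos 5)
Your proof is correct and follows essentially the same route as the paper: reduce via a Frattini-type argument to showing $\mathrm C_F(X)=1$, and then kill the fixed points along the $X$-invariant series $1\leqslant E_T\leqslant N\leqslant F$ using coprime action together with the structural facts from Lemmas~\ref{lem:ET}, \ref{lemma:omnibus gt} and \ref{lem:selfcent}. The only (cosmetic) divergence is that for the factor $F/N$ you invoke the $G(T)$-module decomposition of Lemma~\ref{lemma:omnibus gt}(7) where the paper cites the irreducibility statement of Lemma~\ref{lem:selfcent}(1); both yield $\mathrm C_{F/N}(X)=1$.
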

\begin{proof}
Let $X$ be a Sylow 3-subgroup of $G\{T\}$ and let $\overline{G\{T\}} = G\{T\}/F$. Since $(|F|,|X|)=1$ we
have 
$$\mathrm N_{\overline{G\{T\}}}(\overline{X}) = \overline{ \mathrm N_{G\{T\}}(X)}.$$
Now Lemma~\ref{lemma:omnibus gt} (4) shows that $\overline {X}$ is normal in $\overline{G\{T\}}$ whence
$G\{T\} = \mathrm N_{G\{T\}}(X) F$. Note that $\mathrm N_{G\{T\}}(X) \cap  F = \mathrm C_{F}(X)$.
We claim that $\mathrm C_{F}(X)=1$, from this it follows that $\mathrm N_{G\{T\}}(X) \cong  S_3
\times S_3$ and we obtain the lemma.

Indeed, since the action of $X$ on $F$ is coprime we have $\mathrm C_{F/N}(X) = \mathrm C_{F}(X) N / N$ 
and Lemma~\ref{lem:selfcent}(1) shows that $\mathrm C_{F/N}(X)=1$, thus $\mathrm C_{F}(X) = \mathrm C_{N}(X)$.  By Lemma~\ref{lemma:omnibus gt} (4) and (6) we have $\mathrm C_{N/E_T}(X) =1$ so that $\mathrm C_N(X) = \mathrm C_{E_T}(X)$. Finally we see that $X$ is transitive on $\{x,y,z\}$ and therefore on $\{e_x,e_y,e_z\}$, hence  $\mathrm C_{E_T}(X)  = 1$.
This proves our claim.
\end{proof}

By \cite[Lemma 3.1(iii)]{ivanshpect} there is a unique indecomposable $\mathrm{GF}(2)L_3(2)$-module which is
an extension of $V$ by $V^*$, for $V$ the natural module. We give some brief details of a construction of
this module. Let $\epsilon : V^* \times V^* \rightarrow V$ be defined as follows for $\alpha, \beta \in V^*$
:
$$\epsilon : (\alpha, \beta) \mapsto \left \{ 
\begin{array}{c c } 0 & \text{ if } \alpha =\beta \text{ or if one of } \alpha, \beta \text{ is } 0, \\
 (\ker \alpha \cap \ker \beta)^{\#} & \text{ otherwise.}  \end{array}
 \right .$$
 Now we define  
 \begin{eqnarray}
W & = & \{ (v, \alpha ) \mid v \in V, \alpha \in V^* \} \label{w defn}
\end{eqnarray} 
  and for $v,w \in V$ and $\alpha , \beta \in V^*$ we set
$$(v,\alpha) + (w,\beta) = (v +w + \epsilon(\alpha,\beta), \alpha + \beta ).$$
It is easy to check that $W$ is an elementary abelian $2$-group and the actions of $L_3(2)$ on $V$ and $V^*$
induce an action on $W$. Moreover,  
$$V_0:=\{ (v,0) \mid v \in V\} \cong V$$
is the only nontrivial proper submodule of $W$ whilst $W/V_0 \cong V^*$. Finally $L_3(2)$ respects a unique
quadratic form $q_W$ defined on $W$ by
\begin{eqnarray}
q_W(v, \alpha ) = \left \{ \begin{array}{c c } 0  & \text{ if } \alpha =0 \text{ or if } \alpha \neq 0
\text{ and } v \notin \ker \alpha, \\
1 & \text{ if } \alpha \neq 0 \text{ and } v \in \ker \alpha . \end{array} \right .  \label{qw defn}
\end{eqnarray}

\begin{lemma}
\label{lem:qx/ex is semisimple}
As a module for $G(x)/Q(x) \cong L_3(2)$ we have $$Q(x)/\la e_x \ra \cong V \oplus V^*.$$
\end{lemma}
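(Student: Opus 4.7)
By Lemma~\ref{lem:dualaction}(3), setting $\bar Q := Q(x)/\la e_x \ra$, we have a short exact sequence of $L_3(2)$-modules
\[
0 \to V \to \bar Q \to V^* \to 0,
\]
with $V = E_x/\la e_x \ra$ the natural module and $V^* = Q(x)/E_x$ its dual, the embedded submodule being $\bar{E_x}$. By the construction of $W$ discussed immediately before the statement (citing \cite[Lemma 3.1(iii)]{ivanshpect}), up to isomorphism there are only two possibilities for $\bar Q$: the split extension $V \oplus V^*$ and the unique indecomposable extension $W$. The plan is therefore to rule out $\bar Q \cong W$.

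The distinguishing property of $W$ is that its only proper nonzero $L_3(2)$-submodule is $V_0 \cong V$. Arguing by contradiction, suppose $\bar Q \cong W$; then $\bar{E_x}$ is the unique proper nonzero $L_3(2)$-invariant subspace of $\bar Q$. I will derive a contradiction by exhibiting a second such subspace, built from the subgroup $N$ of Lemma~\ref{lem:N} and its $G(x)$-conjugates. By $G(x)$-transitivity on the seven triangles through $x$ and the fact that $G(x) \cap G\{T\}$ normalises $N$, the $G(x)$-conjugates of $N$ are exactly the seven analogously defined $N_{T'}$ for $T' \ni x$. Each $\bar{N_{T'}}$ is $3$-dimensional, meets $\bar{E_x}$ in the $1$-dimensional subspace $\bar{E_{T'}}$, and the seven $\bar{E_{T'}}$ exhaust the $1$-dimensional subspaces of $\bar{E_x}$.

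The main obstacle is to extract from the orbit $\{\bar{N_{T'}}\}$ an $L_3(2)$-invariant proper submodule different from $\bar{E_x}$: the naive sum $\sum \bar{N_{T'}}$ is all of $\bar Q$ (the images in $V^*$ already give all hyperplanes), while $\bigcap \bar{N_{T'}}$ collapses to $0$ (since $\bigcap \bar{E_{T'}} = 0$ in $\bar{E_x}$, and any common point outside $\bar{E_x}$ would, by the uniqueness of proper submodule in $W$, have to lie in $\bar{E_x}$). A finer construction uses the plus-type orthogonal form on $\bar Q$ arising from the extraspecial structure $Q(x) \cong 2^{1+6}_+$: both $\bar{E_x}$ and each $\bar{N_{T'}}$ are maximal totally singular $3$-subspaces of $\bar Q$ in the same family, since pairwise intersections have odd dimension. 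The required second invariant submodule will be identified as the canonical $L_3(2)$-fixed member of the opposite family of $15$ maximal totally singular $3$-subspaces, produced as follows: by Schur-Zassenhaus lift a Sylow $7$-subgroup $\tilde P$ of $G(x)/Q(x)$ to $G(x)$, and use coprime Maschke's theorem to obtain a unique $\tilde P$-invariant complement $U$ to $\bar{E_x}$ in $\bar Q$; the key (and hardest) step is then to show that $U$ is in fact invariant under the full $L_3(2)$, which will follow from the combinatorial rigidity of the $L_3(2)$-action on the seven triangles — specifically, from the fact that the $\tilde P$-invariant complements for different Sylow $7$-subgroups must agree, since they are distinguished inside the $G(x)$-orbit $\{\bar{N_{T'}}\}$ by $N_{L_3(2)}(\tilde P)$-equivariant data that propagates across $L_3(2)$.
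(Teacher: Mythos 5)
Your setup is correct and matches the paper's: by Lemma~\ref{lem:dualaction}(3) and the uniqueness statement of \cite[Lemma 3.1(iii)]{ivanshpect}, the only alternative to $V\oplus V^*$ is the indecomposable module $W$, and the right strategy is to rule out $W$ using the quadratic form coming from the extraspecial structure of $Q(x)$ together with the subgroup $N$. But the decisive step of your argument is missing, and the route you sketch for it is circular. You propose to produce a second $L_3(2)$-invariant submodule as the unique $\tilde P$-invariant complement $U$ to $\overline{E_x}$ for a Sylow $7$-subgroup $\tilde P$, and then to show $U$ is invariant under all of $L_3(2)$. If $\overline{Q(x)}\cong W$, however, no complement to $V_0$ is $L_3(2)$-invariant --- that is exactly what indecomposability means --- and correspondingly the $\tilde P$-invariant complements for distinct Sylow $7$-subgroups do \emph{not} agree in $W$ (if they all agreed, the common complement would be invariant under the subgroup the Sylow $7$-subgroups generate, which is all of $L_3(2)$, and $W$ would split). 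So the ``fact'' you invoke is equivalent to the conclusion being proved, and the appeal to ``combinatorial rigidity'' and ``equivariant data that propagates'' supplies no mechanism. Moreover $U$ lies in the opposite family of maximal totally singular subspaces from the $\overline{N_{T'}}$ (it meets $\overline{E_x}$ in even dimension, they in odd dimension), so it cannot be ``distinguished inside the orbit'' of the $\overline{N_{T'}}$ as you suggest.

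What is actually needed --- and what the paper does --- is to extract from $N$ a piece of equivariant data for a small subgroup that is incompatible with $W$ without referring to complements at all. Taking the complement $S\cong S_3$ in $G(T)$ provided by Lemma~\ref{lem:g(t) splits}, one sets $M=[N,S]$, so that $\overline{M}$ is a faithful, totally isotropic, $S$-invariant $2$-subspace of $\overline{Q(x)}$ not contained in $[\overline{E_x},S]$ (if it were, one would deduce $N\leqslant E_x$, contradicting Lemma~\ref{lem:lem1}). One then computes explicitly, in the model $W$ with the form $q_W$ of (\ref{qw defn}), that $[W,S]$ contains exactly three nontrivial $S$-submodules and that only one of them is totally isotropic. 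This forces $\overline{M}=[\overline{E_x},S]$, the desired contradiction. Without this (or an equivalent) concrete computation your argument does not close; everything after your correct reduction to ``rule out $W$'' is a plan rather than a proof.
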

\begin{proof}
We set $\overline{Q(x)} = Q(x)/\la e_x \ra $ and use the bar notation. By Lemma~\ref{lem:dualaction} and the uniqueness of $W$ proved in \cite[Lemma 3.1(iii)]{ivanshpect} the only possibility other than $\overline{ Q(x) } \cong V \oplus V^*$ is that $\overline{Q(x)}$ is the module $W$ defined in (\ref{w
defn}) and $\overline{E_x}$ is the unique submodule of $\overline{Q(x)}$ of dimension three. Moreover the quadratic
form $q_E$ defined on $\overline{Q(x)}$ that $G(x)/Q(x)$ respects  is given by 
\begin{eqnarray}q_E: & \overline{a} \mapsto a^2, &  a \in Q(x). \label{qE defn}
\end{eqnarray}
Therefore, since $E_x$ is elementary abelian, $q_E( \overline{E_x } )=0$. Let $\phi$ be the $G(x)/Q(x)$-isomorphism $\phi : \overline{Q(x)} \rightarrow W$. Then by the uniqueness of $q_W$, for all $a \in Q(x)$ we have
$$q_E(\overline{a}) = q_W(\phi( \overline{a}))$$
with $q_W$ as in (\ref{qw defn}).

 Let $S\cong S_3$ be a subgroup of $G(T)$ provided by Lemma~\ref{lem:g(t) splits}. Since $S \cap Q(x) \leqslant
S \cap \mathrm O_2(G(T)) = 1$ we see that $S Q(x) /Q(x) \cong S_3$ and so $S$ acts on $\overline{E_x}$ as a
stabiliser some non-zero vector $v \in \overline{E_x}$ and some 2-space $U\leqslant \overline{E_x}$ such
that $v \notin U$. Since $S\leqslant G(T)$ we have that $S$ normalises $N $ which is
elementary abelian and has order $2^4$ by Lemma~\ref{lem:N}. Moreover $S$ centralises $E_T = \la e_x,
e_y, e_z \ra$ which is contained in $N$. By Lemma~\ref{lem:selfcent} we have that $[N,S] \neq 1$ and so we conclude $N=E_T \times [N,S]$.


Set $M:=[N,S]$ and observe that  $M=[N,S] \leqslant [Q(x),S]$. Note that $\overline{M} \cong M$ since $M
\cap \la e_x \ra = 1$. Hence $\overline{M}$  is a 2-dimensional subspace of $\overline{Q(x)}$ which we claim
satisfies the
following three properties:
\begin{itemize}
\item[(1)] $\overline{M}$ is a faithful $S$-submodule of $\overline{Q(x)}$;
\item[(2)] $\overline{M}$ is totally isotropic, that is, $q_E(\overline{M})=0$;
\item[(3)] $\overline{M}$ is not contained in $[ \overline{ E_x}  , S]$.
\end{itemize}
We have (1) by definition of $M$ and observe that (2) holds since $M$ is elementary abelian. If (3) is false
then
 $\overline{M} \leqslant \overline{E_x}$ and this implies $N = M \la e_x, e_y \ra \leqslant E_x$ which gives
$E_x = N$, a contradiction to
Lemma~\ref{lem:lem1}. Hence indeed $\overline{M}$ satisfies (1), (2) and (3).

Now $J:=[\overline{Q(x)} , S]=\overline M \oplus [\overline {E_x},S]$ is 4-dimensional and every faithful
$S$-submodule of $\overline{Q(x)}$ must be contained in $J$. Moreover, as $S$-modules, $[\overline{E_x},S]
\cong
\overline{M}$, so there are exactly three $S$-submodules of $J$.
Since $S$ preserves the decomposition $\la v \ra \oplus U$ of $\overline{E_x}$ we have that
$[\overline{E_x},S] = U$. Write $U=\{u_1,u_2,u_3,0\}$  and then let $U_1=\phi(U)$. We label the elements of $U_1$ as follows (recall (\ref{w defn}))
$$U_1 = \{ (0,0), (u_1,0), (u_2,0),(u_3,0) \}.$$

For each $u_i$ we have a 2-space $V_i := \la v, u_i \ra \leqslant \overline{E_x}$ and $S$  permutes the
subspaces 
$V_1$, $V_2$, $V_3$ transitively. Define $\alpha_i : V \rightarrow V/V_i$ (that is, $\alpha_i \in V^*$)
 and observe that $S$ has equivalent actions on
$\{u_1,u_2,u_3\}$ and $\{\alpha_1,\alpha_2,\alpha_3\}$. So we may define 
$$U_2 := \{ (0,0), (v,\alpha_1),(v,\alpha_2),(v,\alpha_3) \}$$
and note that $U_2$ is an $S$-invariant subset of $W$. Since $\alpha_i + \alpha_j = \alpha_k$ and
$\epsilon(\alpha_i,\alpha_j)= v$ for $\{i,j,k\}=\{1,2,3\}$, a quick calculation shows that $U_2$ is in fact
a subspace of $W$. Finally, we set
$$U_3 := \{ (0,0), (u_1 + v, \alpha_1), (u_2 + v, \alpha_2), (u_3+v,\alpha_3) \}$$
and again observe that $U_3$ is an $S$-invariant submodule of $W$. Since $S$ acts non-trivially on $U_2$ and
$U_3$, we have $U_2=[U_2,S]$ and $U_3 = [U_3, S]$. Thus $U_2, U_3 \leqslant [W,S]$ and $U_1$, $U_2$ and $U_3$ are the three non-trivial $S$-submodules of $[W,S]$.

Now $[\overline{E_x},S]$ and $\overline{M}$ are both totally isotropic subspaces of $J$. On the other hand,
we have $q_W(U_2) \neq 0$ since, by (\ref{qw defn}), $q_W(v, \alpha_1)=1$ and $q_W(U_3) \neq 0$ since $q_W(u_1 + v, \alpha_1) = 1$. Since $[\overline{Q(x)},S]$ and $[W,S]$ are isomorphic as $S$-modules, we see that $[\overline{Q(x)},S]$ has a unique totally isotropic subspace that is $S$-invariant. Then
properties (1) and (2) imply that $[\overline{E_x},S] = \overline{M}$, a contradiction to (3) which
completes the proof.
\end{proof}

We will denote by $E^x$  the unique normal subgroup of $G(x)$ of order $2^4$ which is contained in $Q(x)$
and is not equal to $E_x$, thus we have
$$Q(x)/\la e_x \ra = E^x/\la e_x \ra \oplus E_x / \la e_x \ra $$
as a $G(x)/Q(x)$-module.

For the next result we need information about the 1-cohomology of $L:=L_3(2)$ on the natural
module $V$ (we refer the reader to \cite[\S17 ]{aschbachersbook} for any unexplained notation). By \cite[Lemma 3.1(i)]{triextraspecial} we have $\mathrm H^1(L,V) \cong C_2$.
That is, there  exists a  unique indecomposable $L$-module $W$ which has a submodule isomorphic to $V$ and
such that $[W/V,L]=1$ (see \cite[(17.11)]{aschbachersbook}). Choosing $M=\mathrm N_{L}(R)$ for some Sylow $7$-subgroup $R$ of $L$ we observe that
$W$
is a semisimple $M$-module. Picking $w \in W$ such that $W = \la w \ra \oplus V$ (as an
$M$-module) we define the 1-cocycle $\mu : L \rightarrow V$ by 
$$\mu : \ell \mapsto [w,\ell] \in V.$$ 
Since $M$ is a maximal subgroup of $L$ we have $M=\mathrm C_{L}(w)$, in particular, $\mu(\ell) \neq 0$ for
$\ell\notin M$. This gives us a concrete description of $\mathrm H^1(L,V) = \la \mu \ra$. Similarly we define
$\gamma : L \rightarrow V^*$ so that $\gamma(m)=0$ for $m\in M$ and $\gamma(\ell) \neq 0 $ for $\ell \notin M$.
Thus we have
$$\mathrm H^1(L,V \oplus V^*) = \la \mu, \gamma \ra$$
(although we have identified $\mu$ and $\gamma$ with their images in the 1-cohomology group).
As in \cite[(17.1)]{aschbachersbook}  for a 1-cocycle $\alpha  : L \rightarrow V \oplus V^*$ we set 
$$S(\alpha) = \{ (\alpha(\ell), \ell) \mid \ell \in L\} \leqslant (V \oplus V^*) \rtimes L.$$

Then $S(0)$, $S(\mu)$, $S(\gamma)$ and $S(\mu + \gamma)$ are the \emph{standard complements} and form a
transversal of the conjugacy classes of complements to $V \oplus
V^*$ in the semidirect product. Note that the intersection of each pair of these groups is precisely $M$.

\begin{proposition}
\label{prn:S}
In $G(x)$ there exist three conjugacy classes of subgroups isomorphic to $L_3(2)$ and one
conjugacy class of subgroups isomorphic to $SL_2(7)$. Moreover, there is a unique class of complements to $Q(x)$
in  $G(x)$ for which both $E_x$ and $E^x$ are semisimple. In particular, $G(x)$ is isomorphic to the centraliser of a $2A$-involution in $L_5(2)$.
\end{proposition}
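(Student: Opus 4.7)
The plan is to identify the structure of $G(x)$ by analysing the extension $1 \to Q(x) \to G(x) \to L \to 1$ with $L := L_3(2)$ in two stages. First I work modulo $\langle e_x\rangle$. By Lemma \ref{lem:qx/ex is semisimple} we have $Q(x)/\langle e_x\rangle \cong V \oplus V^*$, and since $\mathrm H^2(L,V)=\mathrm H^2(L,V^*)=0$ is a standard fact for the natural module of $L_3(2)$, the quotient extension splits. Hence $G(x)/\langle e_x\rangle \cong (V \oplus V^*) \rtimes L$, and its four conjugacy classes of complements to $Q(x)/\langle e_x\rangle$ are exactly $S(0), S(\mu), S(\gamma), S(\mu+\gamma)$, as enumerated by the isomorphism $\mathrm H^1(L, V \oplus V^*) = \langle \mu,\gamma\rangle$ recalled just above the proposition.

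In the second stage, lift each complement back to $G(x)$. For $\alpha \in \{0, \mu, \gamma, \mu+\gamma\}$ let $\hat S(\alpha) \leqslant G(x)$ be the full preimage of $S(\alpha)$. Since $\langle e_x\rangle$ is central in $G(x)$ and has order $2$, each $\hat S(\alpha)$ is a central extension of $L$ by $C_2$ and so is isomorphic either to $L_3(2) \times C_2$ (in which case the $L_3(2)$ factor is a complement to $Q(x)$) or to the unique non-split central extension $SL_2(7)$. Whether $\hat S(\alpha)$ splits over $\langle e_x\rangle$ is controlled by the restriction to $S(\alpha)$ of the class $c \in \mathrm H^2(G(x)/\langle e_x\rangle, \langle e_x\rangle)$ of the extension $\langle e_x\rangle \to G(x) \to G(x)/\langle e_x\rangle$. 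Via inflation--restriction for the normal subgroup $V \oplus V^*$, the dependence of $\mathrm{res}_{S(\alpha)}(c)$ on $\alpha$ is through the cup product of the cocycle $\alpha$ with the class of the extraspecial extension $\langle e_x\rangle \to Q(x) \to V \oplus V^*$, which encodes the commutator form on $V \oplus V^*$. Since this commutator form is the non-degenerate natural pairing between $V$ and $V^*$ (witnessed by $[E_x, E^x] = \langle e_x\rangle$ arising from the extraspecial structure of $Q(x)$), a direct computation of these cup products yields $\mathrm{res}_{S(\alpha)}(c) = 0$ for three values of $\alpha$ and $\mathrm{res}_{S(\alpha)}(c)\neq 0$ for exactly one, which may be normalised to $\alpha = \mu + \gamma$. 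This gives the three classes of $L_3(2)$ and the one class of $SL_2(7)$ asserted by the proposition.

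For the semisimplicity claim, fix a complement $\tilde L$ to $Q(x)$ arising from a split $\hat S(\alpha)$ with $\alpha \in \{0, \mu, \gamma\}$. Then $E_x$ is $\tilde L$-semisimple if and only if the $\tilde L$-module extension $\langle e_x\rangle \to E_x \to V$ splits, an obstruction in $\mathrm{Ext}^1_L(V,\langle e_x\rangle) = \mathrm H^1(L, V^*) = \langle \gamma\rangle$; symmetrically, $E^x$ is $\tilde L$-semisimple if and only if a corresponding obstruction in $\mathrm H^1(L, V) = \langle \mu\rangle$ vanishes. Changing $\alpha$ shifts these obstructions additively by the $V^*$- and $V$-components of $\alpha$ respectively, so exactly one of the three split complements --- namely $\alpha = 0$ --- kills both obstructions simultaneously. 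This singles out the required conjugacy class of complements in $G(x)$ and pins $G(x)$ down completely as the split extension $2^{1+6}_+ : L_3(2)$ with a specified $L_3(2)$-action. Comparing with the standard description of $\mathrm C_{L_5(2)}(2A)$, which has exactly this shape together with two distinguished normal $C_2^4$ subgroups semisimple under a Levi complement (corresponding to the stabilisers of the direction and the hyperplane of the transvection), identifies $G(x)$ with this centraliser. The main obstacle is the cup-product computation of the second paragraph: verifying that exactly three of the four restrictions $\mathrm{res}_{S(\alpha)}(c)$ vanish and correctly matching up which complement fails to split.
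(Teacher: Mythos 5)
Your overall architecture (pass to $G(x)/\la e_x\ra$, identify four classes of complements via $\mathrm H^1(L,V\oplus V^*)$, lift and decide which preimages split, then use semisimplicity of $E_x$ and $E^x$ to single out one class) parallels the paper's proof, but there are genuine gaps. First, the claim that $\mathrm H^2(L,V)=\mathrm H^2(L,V^*)=0$ is false: $\mathrm H^2(L_3(2),V)\cong C_2$, as witnessed by the non-split extension $2^3\cdot L_3(2)$ (for example a maximal subgroup of $G_2(3)$). So the splitting of $G(x)/\la e_x\ra$ over $Q(x)/\la e_x\ra$ does not follow from cohomology vanishing and needs an argument. The paper gets it by observing that $G(x)$ is perfect, so $G(x)/\la e_x\ra$ embeds into the derived subgroup $2^6\rtimes A_8$ of $\Aut(Q(x))$ as the full preimage of $L_3(2)\leqslant A_8$ in a split extension, which is therefore split.

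Second, the decisive quantitative step --- that exactly one of the four preimages $\hat S(\alpha)$ is non-split, and which one --- is asserted rather than proved; you yourself flag the cup-product computation as "the main obstacle". As sketched it is also not quite the right invariant: the class of $\hat S(\alpha)$ depends on the squaring map (the quadratic form $q_E$) on $Q(x)/\la e_x\ra$ and on the class of the base preimage $\hat S(0)$, not only on a cup product of $\alpha$ with the commutator pairing. Relatedly, you normalise independently so that the non-split complement is $\mu+\gamma$ and the both-semisimple one is $0$; a priori these could coincide (the unique complement mod centre making both $E_x$ and $E^x$ semisimple might be the one whose preimage is $SL_2(7)$), in which case the second assertion of the proposition would fail. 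The paper closes exactly this gap by explicit computation: it takes the genuine $S_3\cong S=\la X,e\ra\leqslant G(T)$ supplied by Lemma~\ref{lem:g(t) splits}, locates the four subgroups $\la\overline X,\overline e\ra,\la\overline X,\overline{ee_y}\ra,\la\overline X,\overline{er}\ra,\la\overline X,\overline{ee_yr}\ra$ of $\mathrm N_{\overline{G(x)}}(\overline X)$, checks that $(ee_yr)^2=e_x$ while the other three generators are involutions, and then evaluates $[z_i,e_y]$ and $[z_i,r]$ to see that the split complement $L_1$ is the unique one acting semisimply on both $E_x$ and $E^x$. Some version of this concrete verification (or a completed cohomological computation including the quadratic form and the base class) is needed before the proposition is established.
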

\begin{proof}
We set $\overline{G(x)}=G(x) / \la e_x \ra$ and use the bar notation. Since 
$$\mathrm C_{G(x)}(Q(x)) = \la e_x \ra$$
 we identify $\overline { G(x) }$ with a subgroup of $\mathrm{Aut}(Q(x)) \cong 2^6. O^+_6(2)$ which contains $\overline{Q(x)}$, the normal subgroup of order $2^6$. Since $G(x)$ is perfect, $\overline{ G(x) }$ is contained in the derived subgroup of $\mathrm{Aut} (Q(x))$ which is isomorphic to $2^6 \rtimes  A_8$. In particular, we see that $\overline{ G(x) }$ splits over $\overline { Q(x) }$,
  which means that we may identify $\overline{G(x)}$ with $(V \oplus
V^*) \rtimes L_3(2)$. Clearly if $L \leqslant G(x)$ and $L \cong L_3(2)$ then $\overline L
\cong L_3(2)$, whereas if $L \cong {SL}_2(7)$ then we have $L \cap Q(x) = \mathrm Z(L) =
\la e_x \ra$ so that $\overline L \cong L_3(2)$ also.

From the remarks above we know that there are four conjugacy classes of subgroups of $\overline{G(x)}$
isomorphic to $L_3(2)$. Moreover,  our standard complements are  representatives of each class,
let them be $\overline{ L_1 }$, $\overline{ L_2 }$, $\overline{ L_3 }$ and $\overline{ L_4 }$. There is a
subgroup $\overline M$ of $\overline {G(x)}$ with $\overline M \cong C_7 \rtimes C_3$ such that
$\overline{ L_i  }\cap\overline{  L_j }=\overline M$ for  $i \neq j$. Since the $ \overline { L_i }$ belong
to distinct conjugacy classes of $\overline { G(x) }$ we see that the preimages $L_i$ of the $\overline{ L_i
}$ are also non-conjugate. Moreover, a preimage is isomorphic to one of $ {SL}_2(7)$ or $C_2
\times L_3(2)$. We will show that exactly one of the $  L_i $ is  isomorphic to
${SL}_2(7)$.

Let $S=\la X,e \ra$ be the subgroup of $G(T)$ isomorphic to $S_3$ with Sylow 3-subgroup $X$ delivered by
Lemma~\ref{lem:g(t) splits}. Then $\overline{S} \leqslant \mathrm
N_{\overline{G(x)}}(\overline{X})$. Moreover, since $(|\overline{X}|, |\overline{Q(x)}|)=1$ and the
normaliser of a Sylow 3-subgroup of $L_3(2)$ is isomorphic to $S_3$, we have that 
$$\mathrm N_{\overline{G(x)}}(\overline X) = \mathrm{N}_{\overline{Q(x)}}(\overline X) \overline{S} =
\mathrm C_{\overline{Q(x)}}( \overline X) \overline{S}.$$
It follows from Lemma~\ref{lem:qx/ex is semisimple} that $$\mathrm
C_{\overline{Q(x)}}(\overline
X)=\mathrm C_{\overline{Q(x)}}( \overline S)= \la \overline{e_y}, \overline{r} \ra $$
where $\overline r$ is some element of $\overline {E^x }$ which, when we identify $\overline { E_x }$ and
$\overline { E^x }$ with $V$ and $V^*$ respectively, is a line on which $\overline { e_y }$ does not lie. In
particular, $\overline r$ is contained in a totally isotropic subspace and $[e_y, r ] \neq 1$.
Now $\mathrm N_{\overline{G(x)}} (\overline{ X })$ contains exactly four subgroups isomorphic to $S_3$, namely
\begin{eqnarray*}
\overline{ N_1 } & = &  \la \overline X, \overline {e } \ra , \\
\overline{ N_2 } & = &   \la \overline X, \overline{ e  e_y} \ra, \\
\overline{ N_3 } & = &   \la \overline X , \overline {e  r } \ra, \\
\overline{ N_4}  & = &   \la \overline X , \overline{ e  e_y r } \ra.
\end{eqnarray*}

By Sylow's Theorem, Sylow 3-subgroups of $\overline M$ and $\overline X$ are conjugate, so we may assume
that $\overline X \leqslant \overline M$. Now we see that $\mathrm N_{\overline {L_i} }  (\overline X) =
\overline { N_j }$ for some $j$. Since $\overline {L_i} \cap \overline {L_k} = \overline M$ for $i \neq k$,
we may relabel the $\overline {N_j}$ so that $\mathrm N_{\overline {L_i}}(\overline X) = \overline {N_i}$
for $i=1,2,3,4$. 
Since $e\in G(T)$ we have that $e$ centralises $e_y$, moreover, since $e$ is an involution, both $e$ and
$e_y e$ are involutions,
so the preimages of $\overline{N_1}$ and $\overline{N_2}$ are isomorphic to $C_2 \times S_3$. Now $e$ normalises $\langle r, e_x \rangle$, 
so either $r^e = r$ or $r^e = e_x r$. In the latter case we see that $r^{ee_y}=r$. Without loss of generality therefore, we can assume that $r^e = r$. Hence the preimage of $N_3$ is isomorphic to $C_2 \times S_3$ also. Now $(ee_yr)^2 = e_x$ so the preimage of $N_4$ is isomorphic to $C_3 \rtimes C_4$.
Considering the number of involutions in the $L_i$, we see that  $L_1$, $L_2$ and $L_3$ are isomorphic
to $C_2 \times L_3(2)$ and $L_4 \cong  {SL}_2(7)$.

We  choose a preimage $M$ of $\overline M$ so that $X \leqslant M$ and $M \leqslant L_i \cap L_j$ for $i,j \in \{1,2,3,4\}$. As $M$-modules we have
$$E_x = \la e_x \ra \oplus [E_x,M] \text{ and } E^x = \la e_x \ra \oplus [E^x,M].$$
 As $N_i$-modules, we have
$$E_x = \la e_x, e_y \ra \oplus [E_x,X] \text{ and } E^x = \la e_x,r \ra \oplus [E^x,X]$$
with $[E_x,X] \cong [E^x,X] \cong C_2^2$ and of course $[E_x,X] \leqslant [E_x,M] $ and $[E^x,X] \leqslant [E^x,M]$. Since $L_i = \la M, z_i \ra $, where $z_1 = e$, $z_2= ee_y$, $z_3 = er$ and $z_4 = ere_y$,  to determine $[E_x,L_i]$ we just need to evaluate $[z_i,e_y]$ and $[z_i, r]$ for each $i$.
Since $[e_y,r] = e_x$ we have that 
\begin{itemize}
\item[] $[E_x,L_1] = [E_x, L_2] =[E_x,M]$,
\item[] $[E^x,L_1] = [E^x,L_3] = [E^x,M]$,
\item[] $[E_x, L_3] = [E_x,L_4] = E_x$  and
\item[] $[E^x,L_2] = [ E^x, L_4] = E^x$.
\end{itemize}
 Hence both $E_x$ and $E^x$ are semisimple modules for $L_1$ only. Clearly the decompositions are invariant under conjugation by $G(x)$, so we obtain the second part of the proposition.

We have now seen that $G(x)$ is a split extension of the extraspecial group $Q(x) \cong 2_{+}^{1+6}$ by $L_3(2)$ and have completely determined the action of a complement on $Q(x)$. This uniquely determines the isomorphism type of $G(x)$. After inspecting the centraliser of a $2A$-involution in $L_5(2)$, we see that this group is isomorphic to $G(x)$.
\end{proof}

We now introduce some notation for subgroups of $G(x)$.

\begin{notn}
\label{notnforgx}
Recall that we can identify $G(x)$ with the centraliser of an involution in
$L_5(2)$. Hence $G(x)=\la a_1,a_2,\ldots,a_{12}\ra$ where $a_i$ is the
matrix with 1's on the diagonal and 0's everywhere else except for the
coordinate given by $a_i$ in the matrix below.
$$\left(\begin{array}{ccccc}
         1  & 0   & 0      & 0      & 0 \\
         a_1& 1   & a_{11} & 0      & 0\\
         a_2& a_3 & 1      & a_{12} &0 \\
         a_4& a_5 & a_6    & 1      &0 \\
         a_7& a_8 &a_9     & a_{10} & 1
         \end{array}\right)$$
With this identification we have $e_x=a_7$, $e_y=a_4$,   $E_T = \langle e_x, e_y \rangle$, 
$E_x=\la a_1,a_2,a_4,a_7\ra$, $E^x = \la a_7, a_8, a_9, a_{10} \ra$,
$Q(x)=\la a_1,a_2,a_4,a_7,a_8,a_9,a_{10}\ra$ and 
$L=\la a_{11},a_{12},a_3,a_5,a_6\ra$.   Now $G(x)\cap G\{T\}$ is the stabiliser in $G(x)$ of $E_T$ and in particular $(G(x)\cap
G\{T\})/Q(x)\cong S_4$ is the stabiliser in $G(x)/Q(x)\cong L_3(2)$ of $E_T/\langle a_7\rangle$. Since
$G(x)$ acts dually on $E_x/\langle e_x\rangle$ and $E^x/\langle e_x\rangle$, it follows that $G(x)\cap
G\{T\}$ fixes a 3-subspace $U$ of $E^x$ containing $\langle a_7\rangle$.  Moreover,  we must have that
$U=\langle a_7,a_8,a_9\rangle$. We note that $E_T/ \langle  a_7\rangle$ and $U/\langle e_x\rangle$ are
respectively the unique 1-dimensional and 2-dimensional subspaces of $Q(x)/\langle a_7 \rangle$ fixed by
$G(x)\cap G\{T\}$. There are exactly four elementary abelian normal subgroups of $G(x)\cap G\{T\}$ of order $2^4$, these are
$E_x,E^x,W_1=\langle a_7,a_4,a_8,a_9\rangle$ and $W_2=\langle a_7, a_4, a_1 a_9, a_2 a_8\rangle$. (The number of such subgroups can either be directly calculated using a computer algebra package, or see Lemma~\ref{lem:tau outside caqx inn}.)

Now $N$ is  an elementary abelian subgroup of order $2^4$ normalised by $G(x)\cap
G\{T\}$ with $N\cap E_x=E_T$. Then $N/\langle a_7\rangle$ is totally singular and so contained in $\langle
a_4\rangle^\perp=\langle a_4, a_2, a_1, a_8,a_9\rangle$. Note that $W_2$ is
self-centralising in $G(x)\cap G\{T\}$ while $W_1$ is not, thus Lemma~\ref{lem:selfcent}(2) implies $N=W_2$.

\end{notn}
%
%
%

\begin{lemma}
\label{lem:gt isom}
$G\{T\}$ splits over $N$ and $G\{T\}=N\rtimes K$, where 
$K\cong C_2^4\rtimes (S_3\times S_3)$ is the stabiliser in $L_4(2)$
  of the subgroup $E_T\cong C_2^2$ of $N$. 
\end{lemma}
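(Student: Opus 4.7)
The plan is first to identify the quotient $G\{T\}/N$ with the parabolic subgroup $K$ of $L_4(2)$ stabilising $E_T$, and then to exhibit a complement to $N$ in $G\{T\}$.

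For the identification: by Lemma~\ref{lem:selfcent}(2), $N$ is self-centralising in $G\{T\}$, so conjugation gives a faithful embedding $G\{T\}/N \hookrightarrow \GL(N) \cong L_4(2)$. Since $E_T$ is normal in $G\{T\}$ and contained in $N$, the image lies inside the stabiliser of $E_T$ in $L_4(2)$, which is the maximal parabolic $K \cong C_2^4 \rtimes (S_3 \times S_3)$ of order $576$. A direct order count with $|F|=2^8$ from Lemma~\ref{lemma:omnibus gt}(1) and $|G\{T\}/F|=36$ from Lemma~\ref{lemma:omnibus gt}(4) yields $|G\{T\}/N| = 2^8\cdot 36/2^4 = 576 = |K|$, so $G\{T\}/N \cong K$.

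For the splitting I would apply Gasch\"utz's theorem to the normal abelian $2$-subgroup $N$: since the Sylow $2$-subgroup $F$ has index $9$ in $G\{T\}$, coprime to $|N|=16$, it suffices to split $N$ inside $F$. Let $H\cong S_3\times S_3$ be the complement to $F$ in $G\{T\}$ from Lemma~\ref{lem:g(t) splits}. The $H$-action on the elementary abelian $F/E_T$ is coprime, and by Lemma~\ref{lemma:omnibus gt}(6) and Lemma~\ref{lem:selfcent}(1) the submodule $N/E_T$ and the quotient $F/N$ are non-isomorphic irreducible $H$-modules ($N/E_T$ is natural for one $S_3$ factor and trivial for the other, while $F/N$ is the $4$-dimensional tensor product of the two natural modules). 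Hence Maschke's theorem yields an $H$-invariant complement $\overline{F_1}$ to $N/E_T$ in $F/E_T$; its preimage $F_1 \leqslant F$ is $H$-invariant of order $2^6$ with $F_1\cap N = E_T$.

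The main---and expected hardest---remaining step is to show that $F_1$ is elementary abelian. Granted this, applying Maschke to the $H$-module $F_1$ produces an $H$-invariant complement $F_0$ to the $H$-submodule $E_T$; then $F_0$ has order $2^4$, satisfies $F_0\cap N \leqslant F_0 \cap E_T = 1$, and $K:=F_0 H$ is a complement to $N$ in $G\{T\}$ of the stated structure. To prove $F_1$ is elementary abelian, note that $[F_1,F_1] \leqslant [F,F] = E_T$ by Lemma~\ref{lemma:omnibus gt}(3), and $E_T$ is $H$-irreducible (since $H$ factors as $H_1\times H_2$ with $H_1\leqslant G(T)$ centralising $\{e_x,e_y,e_z\}$ and $H_2$ inducing the full $S_3$ on $E_T$), so $[F_1,F_1]\in\{1,E_T\}$. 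The case $[F_1,F_1]=E_T$ would produce a non-zero $H$-equivariant alternating bilinear pairing $F_1/E_T \times F_1/E_T \to E_T$, which I would rule out by comparing composition factors of $\Lambda^2(F/N)$ with $E_T$ as $H$-modules; the same comparison applied to the squaring map $F_1/E_T \to E_T$ then upgrades abelian to elementary abelian, completing the proof.
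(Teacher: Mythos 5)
Your first step---using Lemma~\ref{lem:selfcent}(2) to embed $G\{T\}/N$ into $\GL(N)\cong L_4(2)$, landing in the stabiliser of $E_T$, and matching orders---is correct and is essentially the paper's argument for the identification of $K$. For the splitting, however, you depart from the paper (which simply exhibits an explicit complement to $N$ inside a Sylow $2$-subgroup using the matrix model of $G(x)$ from Notation~\ref{notnforgx}, and then quotes Gasch\"utz) and your module-theoretic substitute has problems. Two small ones first: $F=\mathrm O_2(G\{T\})$ has index $36$, not $9$, so it is not a Sylow $2$-subgroup and Gasch\"utz does not reduce the problem to splitting inside $F$; and the $H$-action on $F/E_T$ is not coprime since $|H|=36$ is even, so Maschke does not apply as stated. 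These are repairable (the required $H$-invariant complement $\overline{F_1}$ to $N/E_T$ does exist and is unique, because $\mathrm{Ext}^1_H(F/N,\,N/E_T)=\mathrm{Hom}_H(F/N,\,N/E_T)=0$, and an $H$-invariant complement $F_0$ to $N$ in $F$ makes $F_0H$ a complement in $G\{T\}$ directly, with no need for Gasch\"utz). The serious issue is the step you yourself flag as hardest.

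Your plan to rule out $[F_1,F_1]=E_T$ "by comparing composition factors of $\Lambda^2(F/N)$ with $E_T$" cannot work. Write $A$ and $B$ for the two $2$-dimensional irreducible $\mathrm{GF}(2)(S_3\times S_3)$-modules, so that $N/E_T\cong A$, $E_T\cong B$ and $F_1/E_T\cong F/N\cong A\otimes B$. A Brauer character computation shows the composition factors of $\Lambda^2(A\otimes B)$ are $1,1,A,B$; so $E_T$ \emph{is} a composition factor. Worse, it is a quotient: the assignment $(a\otimes b)\wedge(a'\otimes b')\mapsto (a\wedge a')\otimes bb'$ induces a well-defined $H$-epimorphism $\Lambda^2(A\otimes B)\to \Lambda^2A\otimes S^2B$, and in characteristic $2$ one has $\Lambda^2A\cong 1$ and $S^2B\cong 1\oplus B$. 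Hence nonzero $H$-equivariant alternating pairings $F_1/E_T\times F_1/E_T\to E_T$ genuinely exist, and no amount of composition-factor bookkeeping can force the commutator pairing on $F_1$ to vanish. (Your final step, upgrading abelian to elementary abelian via $\mathrm{Hom}_H(A\otimes B,\,B)=0$, is fine.) To close the gap you would have to prove that the \emph{specific} commutator pairing of $F$ restricts to zero on $F_1/E_T$, which requires concrete information about commutators such as $[E_x,E_y]$, $[E_x,N]$ and $[E_y,N]$ --- in effect, the explicit computation that the paper performs in its matrix model.
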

\begin{proof}
We first show that $G\{T\}$ splits over $N$. Since $N$ is a normal abelian subgroup of $G\{T\}$, by 
Gasch\"{u}tz's Lemma  \cite[(10.4)]{aschbachersbook} it suffices to show that a Sylow 2-subgroup of $G\{T\}$
splits over $N$. Such a subgroup is contained in $G(x) \cap G\{T\}$. We will use Notation~\ref{notnforgx}.
Then a complement to $N$ is given by $\la a_8,a_9,a_{10} \ra \rtimes \mathrm{Dih}(8)$ where the
$\mathrm{Dih}(8)$ subgroup is a Sylow 2-subgroup of a standard complement which preserves both $E_x$ and
$E^x$ as semisimple spaces.

Let $K$ be a complement to $N$ in $G\{T\}$. Then $K \cong KN/N$ can be identified with a subgroup of a 2-space
stabiliser in
${GL}_4(2)= L_4(2)$, since $E_T$ is a normal subgroup of $G\{T\}$. By
comparing orders, we see that $K$ is the full stabiliser in $L_4(2)$ of a 2-dimensional subspace and so
$K\cong C_2^4\rtimes (S_3\times S_3)$. 
\end{proof}

Hence we have proved the following.
\begin{proposition}
\label{thm:stabs}
Parts (1)-(3) of Theorem~\ref{thm:main} hold.
\end{proposition}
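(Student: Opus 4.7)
The plan is to observe that this proposition is essentially a packaging statement: all three claims have effectively been proved in the course of Section~\ref{sec:determine}, and the proof consists in matching each clause of Theorem~\ref{thm:main}(1)-(3) to the appropriate lemma or proposition.

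For clause (1), I would cite Proposition~\ref{prn:S} directly. The extraspecial structure $Q(x)\cong 2^{1+6}_+$ is Lemma~\ref{lem:dualaction}(4), and Proposition~\ref{prn:S} both produces a splitting complement $L_1\cong L_3(2)$ to $Q(x)$ in $G(x)$ and identifies $G(x)$ with the centraliser of a $2A$-involution in $L_5(2)$, which has exactly the shape $2^{1+6}_+\rtimes L_3(2)$ asserted in Theorem~\ref{thm:main}(1).

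For clause (2), I would first recall the remark immediately following Lemma~\ref{delta lemma}: triangles of $\Gamma$ are in bijection with valency-$3$ vertices of $\Delta$, so that $G(w)=G\{T\}$ for the triangle $T$ corresponding to $w$. Under this identification the claimed decomposition $G(w)=N\rtimes K$ with $N\cong C_2^4$ (Lemma~\ref{lem:N}(2)) and $K$ the stabiliser in $L_4(2)$ of the 2-dimensional subspace $E_T$ of $N$ is precisely the content of Lemma~\ref{lem:gt isom}.

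Clause (3) requires only a very short additional step. Lemma~\ref{lem:fact of gxt} gives $G(x)\cap G\{T\}=Q(x)G(T)$, and the quotient $(G(x)\cap G\{T\})/Q(x)\cong S_4$ has already been identified inside the proofs of Lemma~\ref{lemma:omnibus gt}(1) and Lemma~\ref{lem:gt isom} as the stabiliser in $G(x)/Q(x)\cong L_3(2)$ of the line $E_T/\langle e_x\rangle$ of the natural module $E_x/\langle e_x\rangle$. To upgrade this to a split extension, I would take the complement $L_1\cong L_3(2)$ to $Q(x)$ in $G(x)$ furnished by Proposition~\ref{prn:S} and observe that the stabiliser of $E_T/\langle e_x\rangle$ in $L_1$ is isomorphic to $S_4$ and, together with $Q(x)$, generates $G(x)\cap G\{T\}$. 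Hence $G(x)\cap G\{T\}\cong 2^{1+6}_+\rtimes S_4$, as required. There is no genuine obstacle beyond checking the bookkeeping: one must remember to invoke the identification $G(w)=G\{T\}$ in clause (2) and to argue the splitting rather than merely the quotient structure in clause (3).
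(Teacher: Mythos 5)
Your proposal is correct and matches the paper, which offers no separate proof but simply states the proposition after Lemma~\ref{lem:gt isom} with the words ``Hence we have proved the following'': clause (1) is Proposition~\ref{prn:S} together with Lemma~\ref{lem:dualaction}(4), clause (2) is Lemma~\ref{lem:gt isom} via the identification of valency-$3$ vertices with triangles, and clause (3) follows from Lemma~\ref{lem:fact of gxt} and the splitting $G(x)=Q(x)\rtimes L_1$ exactly as you argue. Your explicit Dedekind-style argument for the splitting in (3) is a small but welcome amount of extra bookkeeping that the paper leaves implicit.
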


\section{Determination of the amalgams}
\label{sec:determine of isom}

In Proposition~\ref{thm:stabs} we determined the isomorphism type of the amalgam $ \{G(x),G\{T\}\}$ appearing in Theorem~\ref{thm:main}.
 We now wish to determine the number of isomorphism classes of amalgams of
this type. Two amalgams $\cA=\{A_1,A_2\}$, $\cB=\{B_1,B_2\}$ are
\emph{isomorphic} if there exists a bijection
$\varphi:A_1\cup A_2\rightarrow B_1\cup B_2$ that maps $A_i$ onto
$B_i$ such that $\varphi(xy)=\varphi(x)\varphi(y)$ for all 
$x,y\in A_i$ and for $i=1,2$. Let $B=A_1\cap A_2$, $D=\Aut(B)$ and
$D_i$ be the image in $D$ of $\mathrm N_{\Aut(A_i)}(B)$. Then Goldschmidt's
Theorem \cite[(2.7)]{Goldschmidt80} states that the number of nonisomorphic amalgams of
the same type as $\cA$ is equal to the number of double cosets of
$D_1$ and $D_2$ in $D$.

We determine the number of isomorphism classes of amalgams of type $\{G(x),G\{T\}\}$ whilst considering an infinite family of amalgams. For $n\geqslant 4$ we define the amalgam $\mathcal U_n$ as follows. Let $H=\mathrm{AGL}_n(2)=R \rtimes L$ with $R\cong 2^n$ and $L \cong \mathrm L_n(2)$. Picking $r,s \in R^{\#}$ with $r \neq s$ we let $B_n = \mathrm N_H(\la r \ra)$ and $C_n=N_H(\la r,s\ra)$. Then set
\begin{equation}
\label{defn of um}
 \mathcal U_n = \{ B_n,C_n \}.
 \end{equation}
Each amalgam in the sequence $(\mathcal U_n)_{n\geq 4}$ is unfaithful since $B_n$ and $C_n$ both normalise the elementary abelian subgroup  $R$ of order $2^n$. Note that $\mathcal{U}_4$ and $\{G(x),G\{T\}\}$ have the same type but $\mathcal U_4$ is unfaithful while the amalgam appearing in Theorem~\ref{thm:main} is faithful since it is a weakly locally projective amalgam.

We now assemble results on  the automorphism groups of
$B_n$, $C_n$ and $B_n\cap C_n$. We let $Q_n=O_2(B_n)$ and observe that $Q_n\cong 2^{1+2(n-1)}_+$. In particular, $Q_n=E_1E_2$ where $s\in E_1$, $E_1\cong E_2\cong 2^n$ and $E_1\cap E_2=Z(Q_n)=\langle r\rangle$. Moreover, $B_n$ is the centraliser of an appropriate involution in $L_{n+1}(2)$. We choose a subgroup $L$ of $B_n$ such that
$$B_n = Q_n \rtimes L$$
and $L \cong L_{n-1}(2)$ decomposes both $E_1$ and $E_2$ into semisimple modules. Note that  $E_1/\la r \ra$ and $E_2/\la r \ra$ are dual and the module $Q_n/\la r \ra = E_1/\la r \ra \oplus E_2 / \la r \ra$ admits an alternating bilinear form defined  by commutators in $Q_n$.

\begin{lemma}
\label{lem:qx char}
$Q_n$ is characteristic in $B_n\cap C_n$.
\end{lemma}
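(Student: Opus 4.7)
The plan is to characterise $Q_n$ as the unique normal extraspecial subgroup of $B_n\cap C_n$ of order $2^{1+2(n-1)}$; since automorphisms preserve this collection of subgroups, the claim will follow. First I need to check that $Q_n\leqslant B_n\cap C_n$: elements of the translation subgroup $R$ normalise $\langle r,s\rangle$ since $R$ is abelian, while each element of the unipotent radical of the parabolic $P_1\leqslant L$ acts on $R$ as a transvection with image $\langle r\rangle$ and so preserves any subspace containing $\langle r\rangle$, in particular $\langle r,s\rangle$. Next, since $L\cong L_{n-1}(2)$ acts faithfully on $Q_n/\langle r\rangle\cong E_1/\langle r\rangle\oplus E_2/\langle r\rangle$, we get $\mathrm C_{B_n}(Q_n)=\mathrm Z(Q_n)=\langle r\rangle$, from which $\mathrm Z(B_n\cap C_n)=\langle r\rangle$; thus the only normal subgroup of order $2$ in $B_n\cap C_n$ is $\langle r\rangle$. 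Because $B_n\cap C_n$ additionally stabilises $\langle \bar s\rangle$ in $R/\langle r\rangle$, we have $(B_n\cap C_n)/Q_n\cong C_2^{n-2}\rtimes L_{n-2}(2)$, so $P_n:=O_2(B_n\cap C_n)$ is the preimage of the normal $C_2^{n-2}$ and $|P_n:Q_n|=2^{n-2}$.

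The main step is to prove that $\bar Q:=Q_n/\langle r\rangle$ is the unique abelian subgroup of $\bar P:=P_n/\langle r\rangle$ of order $2^{2(n-1)}$. The quotient $\bar P/\bar Q\cong C_2^{n-2}$ acts on $\bar Q=V\oplus V^*$ as the unipotent radical of the stabiliser of $\bar s$ in $L_{n-1}(2)$: each $u\in\bar P/\bar Q$ corresponds to a linear form $\alpha_u\in V^*$ with $\alpha_u(\bar s)=0$, and acts as $x\mapsto x+\alpha_u(x)\bar s$ on $V$ and $\xi\mapsto\xi+\xi(\bar s)\alpha_u$ on $V^*$. A direct calculation then yields $|\mathrm C_{\bar Q}(W)|=2^{(n-1-w)+(n-2)}=2^{2n-3-w}$ for every subgroup $W\leqslant\bar P/\bar Q$ of order $2^w$ with $w\geqslant 1$. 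Now suppose $A\leqslant\bar P$ is abelian of order $2^{2(n-1)}$ with projection $W$ to $\bar P/\bar Q$ of order $2^w$ and $|A\cap\bar Q|=2^k$; then $k+w=2(n-1)$, while the abelian condition forces $A\cap\bar Q\leqslant\mathrm C_{\bar Q}(W)$, so $k\leqslant 2n-3-w$ when $W\neq 1$. These inequalities give $2n-2\leqslant 2n-3$, a contradiction; hence $W=1$ and $A=\bar Q$.

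To conclude: if $Q'$ is a normal extraspecial subgroup of $B_n\cap C_n$ of order $2^{1+2(n-1)}$, then $Q'\leqslant P_n$, and $\mathrm Z(Q')$ is a characteristic subgroup of $Q'$ of order $2$ and therefore a normal subgroup of order $2$ in $B_n\cap C_n$, so $\mathrm Z(Q')=\langle r\rangle$; hence $Q'/\langle r\rangle$ is an abelian subgroup of $\bar P$ of order $2^{2(n-1)}$, and by the uniqueness step $Q'=Q_n$. Any automorphism of $B_n\cap C_n$ preserves normality, order and isomorphism type, so must send $Q_n$ to a subgroup with the same properties, namely $Q_n$ itself. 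The main technical obstacle is the centraliser computation above; this is mechanical once the transvection action is written out summand by summand, but requires care with the contragredient action on $V^*$ (where, crucially, $\mathrm C_{V^*}(W)$ is independent of the nontrivial subgroup $W$).
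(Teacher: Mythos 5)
Your proof is correct, and while it rests on the same underlying computation as the paper's, it is organised quite differently. The paper argues by direct contradiction with the putative image $P=Q_n^{\alpha}\neq Q_n$: minimal normality of $\mathrm O_2\bigl((B_n\cap C_n)/Q_n\bigr)$ forces $PQ_n$ to be the full preimage of that $\mathrm O_2$, whence $|Q_n\cap P|=2^{n+1}$ and $\overline{Q_n\cap P}$ has order $2^n$ in $\overline{Q_n}=Q_n/\mathrm Z(Q_n)$; extraspecialness of $P$ gives $[Q_n\cap P,P]\leqslant \mathrm Z(Q_n)$, so $PQ_n/Q_n\cong 2^{n-2}$ centralises a subgroup of $\overline{Q_n}$ of order $2^n$, contradicting the fact that its fixed-point space on $V\oplus V^*$ has order only $2^{n-1}$. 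You instead characterise $\overline{Q_n}$ intrinsically as the unique abelian subgroup of order $2^{2(n-1)}$ in $\mathrm O_2(B_n\cap C_n)/\langle r\rangle$, and hence $Q_n$ as the unique normal extraspecial subgroup of its order --- a slightly stronger conclusion. Both arguments turn on the same engine: a subgroup forced by commutativity into a centraliser $\mathrm C_{\overline{Q_n}}(W)$ that is one factor of $2$ too small; your version computes $|\mathrm C_{\overline{Q_n}}(W)|=2^{2n-3-w}$ for every nontrivial $W$ of order $2^w$ in the unipotent radical, where the paper only needs the case $w=n-2$. The paper's route is shorter because exploiting $[P,P]=\mathrm Z(Q_n)$ lets it avoid discussing arbitrary abelian subgroups, and because it simply asserts $\mathrm Z(B_n\cap C_n)=\mathrm Z(Q_n)$, a fact you prove explicitly (correctly, via $\mathrm C_{B_n}(Q_n)=\langle r\rangle$) in order to pin down the centre of any competing extraspecial normal subgroup. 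All the individual steps of your argument --- the containment $Q_n\leqslant C_n$, the identification of $\mathrm O_2(B_n\cap C_n)$ as the preimage of $C_2^{n-2}$, the description of the action on $V\oplus V^*$ and in particular the observation that $\mathrm C_{V^*}(W)$ is the hyperplane $\{\xi:\xi(\bar s)=0\}$ independently of the nontrivial $W$ --- check out.
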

\begin{proof}
Suppose for a contradiction there is $\alpha \in \Aut(B_n\cap C_n )$ such that $(Q_n)^\alpha \neq Q_n$
and let $P=(Q_n)^\alpha$. Then $Q_nP > Q_n$ and is normal in $B_n\cap C_n$. Since $B_n\cap C_n / Q_n\cong 2^{n-2}.L_{n-2}(2)$, we see that $\mathrm O_2(B_n  \cap C_n /Q_n)$ is a minimal normal subgroup of order $2^{n-2}$. This implies $|Q_n P| = 2^{n+(n-1)+(n-2)}$
and $|Q_n \cap P| = 2^{n+1}$. Since $\mathrm (Z(B_n\cap C_n))^\alpha = \mathrm Z(B_n\cap C_n) = Z(Q_n)$ we have $Z(Q_n) \leqslant
Q_n \cap P$.

Let $\overline {Q_n } = Q_n /Z(Q_n)$. Then $\overline{Q_n\cap P}$ has order $2^n$. Note that 
$$ [ Q_n \cap P, P] \leqslant [P,P] = Z(Q_n),$$ 
so $PQ_n/Q_n$ centralises $\overline{Q_n \cap P}$. Now we have
$$PQ_n/Q_n = \mathrm O_2(B_n\cap C_n/Q_n) \cong 2^{n-2}.$$ 
With $V$ the natural module for $\mathrm L_{n-1}(2)$ we see that $\overline{Q_n} =E_1/\la r \ra \oplus E_2 / \la r \ra \cong V\oplus V^*$. An easy calculation shows that subspace of fixed points of $PQ_n/Q_n$ acting on $\overline{Q_n}$  has  order $2^{n-1}$,  a contradiction to the fact that $PQ_n/Q_n$ centralises $\overline{Q_n \cap P}$ which has order $2^n$.
\end{proof}

In the next lemma we see that the amalgam $\mathcal U_4$ has properties different from $\mathcal U_n$ for $n\geqslant 5$. Recall that for $n=4$ we use Notation \ref{notnforgx} for elements of $B_4$.

\begin{lemma}
\label{lem:tau outside caqx inn}
Let $\Gamma_n=\Aut(B_n\cap C_n)$. Then 
$$|\Gamma_n: \mathrm C_{\Gamma_n}(Q_n)(\Inn(B_n\cap C_n))| = \left \{ \begin{array}{cc} 2 & \text{ if } n=4, \\ 1 & \text{ otherwise.}\end{array} \right .$$
 Moreover, if  $n=4$ and
$$\tau\in \Gamma_n -  \mathrm C_{\Gamma_n}(Q_n)(\Inn(B_n\cap C_n))$$
then $\tau$ interchanges $N$ and $E_x$.
\end{lemma}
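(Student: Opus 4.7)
The plan is to exploit Lemma~\ref{lem:qx char}: since $Q_n$ is characteristic in $H:=B_n\cap C_n$, restriction yields a homomorphism $\rho\colon\Gamma_n\to\Aut(Q_n)$ with kernel $C_{\Gamma_n}(Q_n)$. Because $Q_n$ is extraspecial and $C_H(Q_n)=Z(Q_n)$, the image $\rho(\Inn(H))$ is isomorphic to $H/Z(Q_n)$, and consequently
\[
\bigl|\Gamma_n:C_{\Gamma_n}(Q_n)\,\Inn(H)\bigr|=\bigl|\rho(\Gamma_n):\rho(\Inn(H))\bigr|.
\]
Passing to the quotient $\Aut(Q_n)/\Inn(Q_n)\cong\mathrm{O}^+_{2(n-1)}(2)$ acting on the orthogonal $\mathrm{GF}(2)$-space $\overline{Q_n}:=Q_n/Z(Q_n)$, the image of $\Inn(H)$ is $H/Q_n\cong 2^{n-2}{:}L_{n-2}(2)$, and any element of $\rho(\Gamma_n)$ must normalise this subgroup.

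I would next argue that the normaliser of $H/Q_n$ in $\mathrm{O}^+_{2(n-1)}(2)$ contains $H/Q_n$ with index at most~$2$. Indeed, $H/Q_n$ stabilises the $L$-invariant decomposition $\overline{Q_n}=\overline{E_1}\oplus\overline{E_2}$ into dual maximal totally singular subspaces together with the singular line $\overline{\la s\ra}\leqslant\overline{E_1}$; the only extra normalising element is the duality involution $\sigma$ that interchanges $\overline{E_1}$ with $\overline{E_2}$ compatibly with the flag data. Thus $\rho(\Gamma_n)/\rho(\Inn(H))$ has order at most~$2$, and the question reduces to deciding whether $\sigma$ lifts to an automorphism of $H$.

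For $n\geqslant 5$, $L_{n-2}(2)$ distinguishes its natural module from its dual; any lift of $\sigma$ would have to swap these two $L_{n-2}(2)$-summands inside $Q_n/\la s,r\ra$, and tracking the central extension $1\to Z(Q_n)\to H\to H/Q_n\to 1$ explicitly via the matrix embedding $B_n\leqslant L_{n+1}(2)$ produces a contradiction, so no lift exists and the index is~$1$. For $n=4$, however, $L_{n-2}(2)=L_2(2)\cong S_3$ is self-dual on its natural $2$-dimensional $\mathrm{GF}(2)$-module, the obstruction vanishes, and an explicit involutory lift $\tau\in\Gamma_4$ may be written down using Notation~\ref{notnforgx} by specifying its action on the generators $a_1,\dots,a_{12}$. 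Since the four $H$-normal elementary abelian subgroups of $Q_4$ of order~$2^4$ are precisely $E_x$, $E^x$, $W_1$ and $N$, with $W_1$ distinguished (as the unique non-self-centralising one) and hence fixed by $\tau$, the involution $\tau$ permutes $\{E_x,E^x,N\}$; a short direct computation from Notation~\ref{notnforgx} then shows that in fact $\tau$ fixes $E^x$ and interchanges $E_x$ with $N$.

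The main obstacle is the non-lifting argument for $n\geqslant 5$. Computing the normaliser of $H/Q_n$ inside $\mathrm{O}^+_{2(n-1)}(2)$ is routine linear algebra, but ruling out a lift of~$\sigma$ requires either a direct $2$-cocycle computation on the central extension $1\to Z(Q_n)\to H\to H/Q_n\to 1$, or, more pragmatically, the identification of a subgroup of $H$ that is characteristic in $H$ (beyond $Q_n$) but cannot be preserved by any candidate lift of $\sigma$. The self-duality of the natural $L_2(2)$-module underpinning the $n=4$ case evaporates as soon as $n-2\geqslant 3$, which is ultimately why the index drops from~$2$ to~$1$.
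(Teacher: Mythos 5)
Your reduction of the index to $|\rho(\Gamma_n):\rho(\Inn(B_n\cap C_n))|$ is sound, but the structural claim on which everything after it rests --- that the only candidate for an extra automorphism is a duality involution $\sigma$ interchanging $\overline{E_1}$ with $\overline{E_2}$ --- is false, and this derails both halves of the argument. The space $\overline{\la r,s\ra}=\la\bar s\ra$ is the unique totally singular $(B_n\cap C_n)$-invariant $1$-space of $\overline{Q_n}$ (it is the fixed-point space of $B_n\cap C_n$ on $\overline{Q_n}$), hence is fixed by every element of $\Gamma_n$ and by every element of the normaliser of $H/Q_n$ in $\mathrm O^+_{2(n-1)}(2)$; it lies in $\overline{E_1}$ and not in $\overline{E_2}$, and $\overline{E_2}$ is in fact the unique invariant maximal totally isotropic subspace \emph{not} containing it. So $E_2$ is characteristic and no automorphism can swap $E_1$ with $E_2$; your proposed $\sigma$ does not even normalise $H/Q_n$. (Your final paragraph implicitly concedes this: you conclude there that $\tau$ fixes $E^x=E_2$ and swaps $E_x=E_1$ with $N$, contradicting the definition of $\tau$ as a lift of $\sigma$.) The true source of the exceptional automorphism at $n=4$ is not a duality but the existence of a \emph{fourth} invariant maximal totally isotropic subspace, namely $\overline N$, arising as a diagonal submodule of $\widetilde{E_1}\oplus\widetilde{E_3}$ in the quotient by $\overline{\la r,s\ra}$; the diagonal exists precisely because the natural $L_2(2)$-module is self-dual (so here your instinct about self-duality is pointing at the right phenomenon, but through the wrong mechanism) and disappears for $n\geqslant 5$, where $\widetilde{E_1}$ and $\widetilde{E_3}$ are dual and non-isomorphic. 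The paper's proof then identifies $\mathrm C_{\Gamma_n}(Q_n)\Inn(B_n\cap C_n)$ as exactly the stabiliser of $E_1$ in $\Gamma_n$, so the index is the orbit length $|E_1^{\Gamma_n}|$, bounded by the count of invariant maximal totally isotropic subspaces: no cohomology is required.

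The second problem is that neither of the two hard implications is actually carried out. For $n\geqslant 5$ you defer the non-existence of a lift to an unspecified $2$-cocycle computation which you yourself flag as the main obstacle, and which in any case targets a nonexistent $\sigma$; the correct argument is the counting above, which leaves $E_1$ with nowhere to go. For $n=4$ you assert that ``an explicit involutory lift $\tau$ may be written down'' without producing it; the lower bound genuinely requires exhibiting an automorphism outside $\mathrm C_{\Gamma_4}(Q_4)\Inn(B_4\cap C_4)$, which the paper does via $\beta: a_1\mapsto a_1a_7a_9$, $a_2\mapsto a_2a_7a_8$ (fixing the remaining generators), a map that visibly fixes $E^x$ pointwise and carries $E_x$ onto $N$. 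Your observation that $W_1$ is the unique non-self-centralising member of the four normal elementary abelian subgroups of order $2^4$ is correct and could support the ``moreover'' clause, but the cleaner route is that $E^x$ is characteristic, so any $\tau$ outside the stabiliser of $E_x$ must carry $E_x$ to the only other available invariant maximal totally isotropic subspace, which is $N$.
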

\begin{proof}
Let $\gamma \in \Gamma_n$. By Lemma~\ref{lem:qx char},  $Q_n$ is characteristic in $B_n \cap C_n$ and so $\gamma$ normalises $Q_n$ and $\langle r \rangle=Z(Q_n)$, and therefore acts on $\overline{B_n \cap C_n} = (B_n \cap C_n)/\la r \ra$.  Since $\overline{\la r, s\ra}$ and $\overline{\langle
C_{E_2}(s)  \rangle}$ are the only totally singular 1- and $(n-2)$-spaces of $\overline{Q_n}$ fixed by $B_n \cap C_n$ each of them must be stabilised by $\gamma$. Set
$$\mathcal I = \{ \overline U \subset \overline{Q_n} \mid \overline{U} \text{ is totally isotropic, } B_n \cap C_n\text{-invariant and }\dim \overline{U} = n-1 \}.$$
Observe that $\gamma$ permutes the elements of $\mathcal I$ and $\overline{E_1}$, $\overline{E_2}$ are elements of $\mathcal I$. Let $E_3 = \la r,s, C_{E_2}(s) \ra$, then $\overline{E_3} \in \mathcal I$. Suppose that $\overline{E_4}$ is a fourth element of $\mathcal I$. Consider the quotient $\widetilde{Q_n}:=\overline{Q_n} / \overline{\la r, s \ra }$ where $\overline{E_4}$ projects to an $n-2$, respectively, $n-1$ dimensional subspace if $\overline{E_4}$ contains $\overline{\la r,s\ra}$, respectively, doesn't contain $\la r, s \ra$. We have $\widetilde{Q_n} = \widetilde{E_1} \oplus \widetilde {E_2}$ and $\widetilde{E_3}$ is the unique $(B_n \cap C_n)$-invariant proper subspace of $\widetilde{E_2}$. If $n\neq 4$ then $\widetilde{E_3}$ and $\widetilde{E_1}$ are dual non-isomorphic $(B_n \cap C_n)$-modules. Thus one of $\widetilde{E_4} = \widetilde{E_1}$, $\widetilde{E_4} = \widetilde{E_3}$ or $\widetilde{E_4}=\widetilde{E_2}$. In the respective cases we obtain $\overline{E_4}=\overline{E_1}$, $\overline{E_4}=\overline{E_3}$ or $\overline{E_4}=\overline{E_2}$, and so for $n\neq 4$ we have $|\mathcal I| = 3$. For $n=4$ we see that there is exactly one more option for $\widetilde{E_4}$, the unique diagonal submodule of $\widetilde{E_1} \oplus \widetilde{E_3}$. This is the image of $N$ defined in Notation \ref{notnforgx}, and therefore $|\mathcal{I}|=4$ for $n=4$.

Since $\la r, s\ra$ is fixed by $\gamma$ and $\overline{E_2}$ is the only element of $\mathcal I$  not containing $\overline{\la r, s \ra}$, $E_2 $ is fixed by $\gamma$. Since $E_2$ is fixed and $E_3 = \la r, s , C_{E_2}(s) \ra$ we see that $E_3$ is fixed by $\gamma$ also. Hence $E_1$ is fixed by $\gamma$ unless $n=4$ and possibly $\gamma$ interchanges $ E_1 =E_x$ and $N$ as above.

Now $\Gamma_n/\mathrm C_{\Gamma_n}(Q_n)$ is
isomorphic to a subgroup of $\Aut(Q_n)=2^{2(n-1)}.O^+_{2(n-1)}(2)$ containing $\mathrm{Inn}(Q_n)$.  The stabiliser in $O^+_{2(2n-1)}(2)$ of two complementary totally isotropic $(n-1)$-spaces is $L_{n-1}(2)$ and the stabiliser
in this of a totally singular 1-space contained in one of the $(n-1)$-spaces is $2^{n-2}.L_{n-2}(2)$. Since each element of $\Gamma_n$ fixes $\overline{E_2}$ and $\overline{\la r,s \ra }$, it follows that $\mathrm C_{\Gamma_n}(Q_n) \Inn(B_n \cap C_n) $ is the stabiliser in $\Gamma_n$ of $E_1$. In the previous paragraph we saw that $|E_1^{\Gamma_n}|=1$ if $n\neq 4$ and $|E_1^{\Gamma_n} | \leqslant 2$ if $n=4$. In particular, $\Gamma_n = \mathrm C_{\Gamma_n}(Q_n) \Inn(B_n \cap C_n) $ if $n\neq 4$ and for $n=4$ we have
 $$|\Gamma_n : \mathrm C_{\Gamma_n}(Q_n) \Inn(B_n \cap C_n) | \leqslant 2.$$

Recall Notation~\ref{notnforgx} for elements of $B_4$. We define an automorphism $\beta$ of $B_4 \cap C_4$ as follows
\begin{equation}
\beta  :   \begin{array}{ccc} a_1 &\mapsto &a_1a_7 a_9\\ a_2 & \mapsto & a_2a_7a_8 \end{array}
 \label{beta def}
\end{equation}
and $\beta$ fixes $a_3,\dots,a_{11}$. Clearly $\beta \notin \mathrm C_{\Gamma_n}(Q_n) \Inn(B_n \cap C_n )$ and thus we obtain the equality stated in the lemma.
\end{proof}

\begin{proposition}
\label{prn:GxGT}
Let $\Gamma_n=\Aut(B_n \cap C_n)$. If $n \neq 4$ then $\Gamma_n = \mathrm{Inn}(B_n \cap C_n)$. If $n = 4$ then $|\mathrm C_{\Gamma_n}(Q_n)|=2$ and  $| \Out(B_n \cap C_n) | 
= 4$.
\end{proposition}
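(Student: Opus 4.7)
The plan is to leverage Lemma~\ref{lem:tau outside caqx inn}, which already records
\[
|\Gamma_n : \mathrm C_{\Gamma_n}(Q_n)\,\Inn(B_n \cap C_n)| = \begin{cases} 2 & \text{if } n=4,\\ 1 & \text{if } n \geqslant 5,\end{cases}
\]
so the remaining work is to compute $\mathrm C_{\Gamma_n}(Q_n)$ and determine its intersection with $\Inn(B_n \cap C_n)$. First I would pin down $\mathrm Z(B_n \cap C_n)$. Since $Q_n$ is extraspecial with $\mathrm Z(Q_n) = \langle r \rangle$ and $L \cong L_{n-1}(2)$ acts faithfully on $Q_n/\langle r \rangle \cong V \oplus V^*$, one has $\mathrm C_{B_n}(Q_n) = \langle r \rangle$, and therefore $\mathrm C_{B_n \cap C_n}(Q_n) = \mathrm Z(B_n \cap C_n) = \langle r \rangle$. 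Any inner automorphism centralising $Q_n$ is induced by a central element of $B_n \cap C_n$ and is therefore trivial, so $\mathrm C_{\Gamma_n}(Q_n) \cap \Inn(B_n \cap C_n) = 1$.

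Next I would identify $\mathrm C_{\Gamma_n}(Q_n)$ with a Hom group. Given $\alpha \in \mathrm C_{\Gamma_n}(Q_n)$, the assignment $g \mapsto \alpha(g)g^{-1}$ takes values in $\mathrm C_{B_n \cap C_n}(Q_n) = \langle r \rangle$; the centrality of $r$ makes it a homomorphism, and it vanishes on $Q_n$ because $\alpha$ fixes $Q_n$ pointwise. The construction reverses, yielding
\[
\mathrm C_{\Gamma_n}(Q_n) \;\cong\; \mathrm{Hom}\bigl((B_n \cap C_n)/Q_n,\, C_2\bigr).
\]
Now $(B_n \cap C_n)/Q_n \cong 2^{n-2}.L_{n-2}(2)$ is the stabiliser in $L_{n-1}(2)$ of a $1$-space. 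For $n \geqslant 5$ the factor $L_{n-2}(2)$ is simple non-abelian and acts non-trivially on the normal subgroup $2^{n-2}$, so a standard commutator calculation shows that the extension is itself perfect and the Hom group vanishes. For $n = 4$ the quotient is $2^2 \rtimes S_3 \cong S_4$, whose abelianisation is $C_2$, and this gives $|\mathrm C_{\Gamma_n}(Q_n)| = 2$.

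Combining these facts with Lemma~\ref{lem:tau outside caqx inn} finishes the argument. For $n \geqslant 5$ the central part is trivial and the index in the lemma is $1$, so $\Gamma_n = \Inn(B_n \cap C_n)$. For $n = 4$, the triviality of the intersection forces $|\mathrm C_{\Gamma_n}(Q_n)\,\Inn(B_n \cap C_n)| = 2\,|\Inn(B_n \cap C_n)|$, and the extra index of $2$ from the lemma then yields $|\Out(B_n \cap C_n)| = 4$. The main obstacle is the perfectness calculation for $(B_n \cap C_n)/Q_n$ when $n \geqslant 5$; everything else is formal once the parabolic structure has been correctly identified and the extraspecial structure of $Q_n$ is exploited to pin down the centre.
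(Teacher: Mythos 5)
Your argument is correct and follows essentially the same route as the paper: both reduce the computation of $\mathrm C_{\Gamma_n}(Q_n)$ to the observation that $g\mapsto \alpha(g)g^{-1}$ lands in $\mathrm C_{B_n\cap C_n}(Q_n)=\mathrm Z(B_n\cap C_n)=\langle r\rangle$, and hence to whether $(B_n\cap C_n)/Q_n\cong 2^{n-2}.L_{n-2}(2)$ admits an index-two subgroup, before invoking Lemma~\ref{lem:tau outside caqx inn}. Your $\mathrm{Hom}$-group identification simply packages in one step what the paper does in two (bounding $|\mathrm C_{\Gamma_n}(Q_n)|\leqslant 2$ via the fixed-point subgroup and then exhibiting the nontrivial central automorphism explicitly as in (\ref{alpha def})), and your perfectness verification for $n\geqslant 5$ supplies a detail the paper only asserts.
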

\begin{proof}
Let $C=  \mathrm C_{\Gamma_n}(Q_n)$. Suppose there is $g\in B_n \cap C_n$ such
that $c_g \in C \cap \mathrm{Inn}(B_n \cap C_n)$ (where $c_g$ denotes the automorphism induced by
conjugation by $g$).
 Then for all $w\in Q_n$ we have $w=wc_g = w^g$, so
$g\in \mathrm C_{B_n}(Q_n) = \mathrm Z(B_n)$, whence $c_g=1$. Hence 
$$[C,\mathrm{Inn}(B_n \cap C_n) ] =1.$$
 Now suppose that $\alpha \in C$ and let $g\in B_n \cap C_n$ be arbitrary. Since $\alpha$ centralises
$c_g$ we have $c_g = c_{g\alpha}$. It follows that $g\alpha = g$ or $g\alpha = g r$ (where $\la r \ra =
\mathrm Z(B_n \cap C_n) \cong C_2$). Since $r \alpha = r$ we have that $g\alpha^2 = g$ and $\alpha^2=1$ by the
arbitrary choice of $g$. For $g_1,g_2 \notin C_{B_n \cap C_n}(\alpha)$ we have $(g_1g_2^{-1})\alpha = g_1 r g_2^{-1}r = g_1g_2^{-1}$. Hence $g_1C_{B_n \cap C_n}(\alpha)=g_2C_{B_n \cap C_n}(\alpha)$, that is, $C_{B_n \cap C_n}(\alpha)$ is a subgroup of index  at most two containing $Q_n$. If $n\neq 4$ then $B_n \cap C_n = Q_n \rtimes 2^{n-2} . L_{n-2}(2)$ has no such proper subgroup, and we have $C=1$. If $n=4$ then  $B_n \cap C_n$ has a unique such subgroup and it follows that $|C|\leqslant 2$.  We now show that  in the case of $n=4$ we have equality. Let $D$ be the unique
index two subgroup of  $B_n \cap C_n$ that contains $Q_n$. We define $\alpha : B_n \cap C_n
\rightarrow B_n \cap C_n$ by the following,
\begin{equation}\alpha : g \mapsto \left \{ 
\begin{array}{c c } g & \text{ if } g \in D \\
 g r & \text{ otherwise.}  \end{array}
 \right .
 \label{alpha def}
\end{equation}
 Since $r$ is an element of order two in the centre of $B_n$, it is easy to check that $\alpha$ is a
homomorphism. Moreover, by definition $\ker \alpha = 1$, so $\alpha$ is a non-trivial automorphism of $B_n
\cap C_n$ which lies in $\mathrm C_{\Gamma_n}(Q_n)$.

The result now follows from Lemma~\ref{lem:tau outside caqx inn}.
 \end{proof}

For the next three results we restrict our attention to $n=4$. Therefore we set
\begin{center}
 $B= B_4$,\\
  $C=C_4,$\\
   $Q=Q_4$,
   \end{center}
and we use Notation~ \ref{notnforgx} and the results of Section~\ref{sec:determine}.

\begin{proposition}
\label{lem:autG(x)}
$\Aut(B)=(Q/Z(Q)) \rtimes \Aut(L)$ where 
$(Q/Z(Q))  \cong 2^{6}$ decomposes into dual $L$-modules which are
interchanged by the inverse-transpose automorphism of $L$.
\end{proposition}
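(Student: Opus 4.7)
The plan is to study the restriction homomorphism $\rho:\Aut(B)\to\Aut(Q)$, which is well-defined because $Q=\mathrm O_2(B)$ is characteristic in $B$. I would first prove $\rho$ is injective. If $\alpha\in\ker\rho$, then $\alpha$ fixes $Q$ pointwise and hence centralises $Z(Q)=Z(B)\cong C_2$. For $\ell\in L$ the element $\alpha(\ell)\ell^{-1}$ centralises $Q$, so lies in $C_B(Q)=Z(Q)$; the resulting map $L\to Z(Q)\cong C_2$ is a homomorphism and is therefore trivial since $L\cong L_3(2)$ is perfect. Hence $\alpha$ is the identity on $B=QL$.

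Next I would identify the image of $\rho$. Write $\bar Q=Q/Z(Q)$ and let $\bar L$ be the image of $L$ in $\Out(Q)$, which is the full isometry group of the quadratic form $\bar x\mapsto x^2$ on $\bar Q$ (of plus type in dimension $6$); then $\rho(\Inn(B))$ identifies with $\bar B:=\bar Q\rtimes\bar L\leqslant\Aut(Q)$. Since $\Aut(B)$ normalises $\Inn(B)$, the image of $\rho$ lies in $N_{\Aut(Q)}(\bar B)=\bar Q\rtimes N_{\Out(Q)}(\bar L)$. By Lemma~\ref{lem:qx/ex is semisimple}, $\bar Q=V\oplus V^*$ with $V=\overline{E_x}$ and $V^*=\overline{E^x}$ the only proper nontrivial $\bar L$-submodules, and both are maximal totally singular of dimension $3$. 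Therefore $N_{\Out(Q)}(\bar L)$ stabilises the unordered pair $\{V,V^*\}$; its pointwise stabiliser is $\bar L$ itself (a Levi of a maximal parabolic of $\Out(Q)$), and the setwise stabiliser has order $2|\bar L|$, with the extra coset realised by the isometry that swaps the two $\Omega^+_6(2)$-orbits of maximal totally singular subspaces. Thus $\mathrm{im}(\rho)\leqslant\bar Q\rtimes(\bar L\cdot 2)$.

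Finally I would exhibit an outer automorphism attaining this upper bound. By Proposition~\ref{prn:S}, $B\cong C_{L_5(2)}(t)$ for a $2A$-involution $t$; the inverse-transpose (graph) automorphism of $L_5(2)$ sends $t$ into its conjugacy class, so composing with a suitable inner automorphism produces an automorphism of $L_5(2)$ fixing $t$, whose restriction to $B$ is the desired $\tau\in\Aut(B)$. A direct computation in the matrix model of Notation~\ref{notnforgx} shows that $\tau$ induces the inverse-transpose on $L$ — which is outer in $\Aut(L)\cong L\cdot 2$ — and correspondingly interchanges the dual summands $V=\overline{E_x}$ and $V^*=\overline{E^x}$ of $\bar Q$. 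Combining this with injectivity of $\rho$ gives $|\Aut(B)|=|\bar Q|\cdot|\Aut(L)|$. The intersection $\langle L,\tau\rangle\cap\bar Q$ is a normal subgroup of $\langle L,\tau\rangle\cong L\cdot 2$ that cannot contain the simple subgroup $L$, hence is trivial, so $\langle L,\tau\rangle\cong\Aut(L)$ serves as a complement to $\bar Q$. This yields the splitting $\Aut(B)=\bar Q\rtimes\Aut(L)$ with the prescribed action on $\bar Q=V\oplus V^*$. The main obstacle is the explicit construction of $\tau$ together with the verification that it swaps the two dual summands — in particular, recognising that $\Out(Q)$ is the full isometry group (not merely $\Omega^+_6(2)$, which preserves each ruling of maximal totally singular subspaces), so that the swap is actually available inside $\Aut(Q)$.
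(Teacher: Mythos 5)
Your argument is correct, and its overall architecture matches the paper's: control $\Aut(B)$ through its action on $Q=\mathrm O_2(B)$, then realise the upper bound by restricting a graph automorphism of the ambient group to $B=\mathrm C_{L_5(2)}(t)$. The difference lies in how the upper bound is obtained. The paper leans on Proposition~\ref{prn:S}: an arbitrary $g\in\Aut(B)$ sends $L$ to another complement of $Q$ decomposing $E_1$ and $E_2$ semisimply, all such complements are conjugate, so after adjusting by an inner automorphism and (if necessary) the inverse--transpose one may assume $g$ normalises $L$ and fixes $E_1$ and $E_2$, at which point $g$ is absorbed because $L$ is the full stabiliser of that pair modulo $\Inn(Q)$. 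You instead prove that the restriction map $\Aut(B)\to\Aut(Q)$ is injective (a crossed-homomorphism argument from $\mathrm C_B(Q)=Z(Q)$ and perfectness of $L$, a step the paper leaves implicit) and then compute $N_{\Out(Q)}(\bar L)$ as the setwise stabiliser of $\{V,V^*\}$ in $O^+_6(2)\cong S_8$, namely $L_3(2).2$. This trades the cohomological input of Proposition~\ref{prn:S} for a normaliser computation in the orthogonal group; both yield $|\Aut(B)|\leqslant 2^{7}\cdot 168$, and your closing remark --- that the swap of the two rulings of maximal totally singular subspaces is available precisely because $\Out(Q)$ is the full isometry group rather than $\Omega_6^+(2)$ --- is exactly the point that makes the extra factor of $2$ attainable. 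The only step worth tightening is the claim $\la L,\tau\ra\cong\Aut(L)$: you should note that $\tau$ can be chosen (by a suitable correcting inner automorphism) so that it normalises $L$ with $\tau^2$ inducing an element of $L$, before the ``no normal $2$-subgroup of $L.2$ except $1$'' argument applies; the paper sidesteps this by importing the split group $Q/Z(Q)\rtimes\Aut(L_3(2))$ wholesale from the centraliser of $t$ in the extended ambient group.
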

\begin{proof}
Since the centraliser of an appropriate involution in $\Aut(L_{4}(2))$
is isomorphic to $Q \rtimes \Aut(L_3(2))$ and $Z(Q) =\mathrm Z(B)$, we have that  
$Q/Z(Q) \rtimes \Aut(L_3(2))\leqslant\Aut(B )$. Let $g$ be
an automorphism of $B $. Then
$L^g$ is a complement of $Q $ and $L^g$ must decompose $Q $ in the
same manner. Since all complements of $Q $ with this property are
conjugate to $L$ by Proposition \ref{prn:S}, we can adjust $g$ by an inner automorphism so that $g$ normalises $L$ and then by an
inverse-transpose automorphism if necessary so that $g$ fixes $E_1$ and $E_2$ setwise.  However, $L$ is the
full stabiliser in
$\Aut(Q )$ of $E_1$ and $E_2$. Hence $g\in Q /Z(Q ) \rtimes \Aut(L)$.
\end{proof}

\begin{lemma}
\label{lem:Gxinduced}
The stabiliser of $B  \cap C $ in $\Aut(B )$
induces the inner automorphism group of $B  \cap C $.
\end{lemma}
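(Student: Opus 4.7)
The strategy is to combine the structure of $\Aut(B)$ given by Proposition \ref{lem:autG(x)} with a conjugacy analysis of $B\cap C$ inside $B$. Let $S$ denote the stabiliser of $B\cap C$ in $\Aut(B)$. I will prove (i) $N_B(B\cap C)=B\cap C$, so that $S\cap \Inn(B)$ restricts onto $\Inn(B\cap C)$, and (ii) $S\leqslant \Inn(B)$, from which the conclusion is immediate.

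For (i), comparing orders gives $|B|=2^7\cdot 168$ and $|B\cap C|=2^7\cdot 24$, so $|B:B\cap C|=7$. Since $B$ permutes the seven triangles through $x$ transitively, $B\cap C$ is not normal in $B$; as $7$ is prime, $N_B(B\cap C)=B\cap C$. Therefore $S\cap \Inn(B)=(B\cap C)/Z(B)$. A brief check that $C_B(B\cap C)\leqslant C_B(Q)=Z(Q)=Z(B\cap C)$ (using $Q\leqslant B\cap C$ and that $L$ acts faithfully on the extraspecial group $Q$) then shows that restriction $(B\cap C)/Z(B)\to \Aut(B\cap C)$ is injective with image exactly $\Inn(B\cap C)$.

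The main obstacle is (ii), ruling out outer automorphisms of $B$ that stabilise $B\cap C$. By Proposition \ref{lem:autG(x)}, the quotient $\Aut(B)/\Inn(B)$ has order $2$ and is represented by the inverse-transpose automorphism $\tau$ of $L\cong L_3(2)$, extended to $B$ so as to swap the two dual summands $E_x/Z(Q)$ and $E^x/Z(Q)$ of $Q/Z(Q)$. If some $\sigma=\iota\tau\in S$ with $\iota\in\Inn(B)$, then $\tau(B\cap C)=\iota^{-1}(B\cap C)$ would be $B$-conjugate to $B\cap C$, and I will derive a contradiction from this.

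Since $Q$ is characteristic in $B$, both $B\cap C$ and $\tau(B\cap C)$ contain $Q$. Projecting to $L=B/Q\cong L_3(2)$, the image of $B\cap C$ is the $S_4$-subgroup stabilising the $1$-subspace $E_T/Z(Q)$ of the natural module $V=E_x/Z(Q)$, whereas the image of $\tau(B\cap C)$ stabilises a $1$-subspace of the dual module $V^*=E^x/Z(Q)$, i.e.\ is a hyperplane stabiliser in $V$. Point stabilisers and hyperplane stabilisers form the two distinct conjugacy classes of $S_4$-subgroups in $L_3(2)$, fused only by the outer automorphism of $L_3(2)$. Hence the projections of $B\cap C$ and $\tau(B\cap C)$ are not $L$-conjugate, so $B\cap C$ and $\tau(B\cap C)$ are not $B$-conjugate, yielding the required contradiction.
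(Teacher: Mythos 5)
Your proof is correct and takes essentially the same route as the paper: the paper's one-line argument that ``the stabiliser of a 1-space is self-normalising in $\Aut(L)$'' is exactly the conjunction of your steps (i) (self-normalising in $L$, hence in $B$) and (ii) (the outer, inverse-transpose automorphism fuses point stabilisers with hyperplane stabilisers, so no outer coset element can stabilise $B\cap C$). Your version just makes explicit what the paper leaves to the reader.
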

\begin{proof}
Since the stabiliser of a 1-space is self-normalising in $\Aut(L)$ it follows that
$B \cap C $ is selfnormalising in $Q \rtimes \Aut(L)$. The
result follows.
\end{proof}

\begin{lemma}
\label{lem:autGT}
We have $\Aut(C )=\Inn(C )\cong C $ and the stabiliser of
  $B  \cap C $ in $\Aut(C )$ induces the inner automorphism group
of $B  \cap C $.
\end{lemma}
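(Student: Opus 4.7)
The plan is first to show $\mathrm Z(C)=1$, yielding $\Inn(C)\cong C$, then to prove every automorphism of $C$ is inner, and finally to deduce the statement about the stabiliser of $B\cap C$. For the centre, by Lemma~\ref{lemma:omnibus gt}(4) the quotient $C/F\cong S_3\times S_3$ is centreless, so $\mathrm Z(C)\leqslant F$; then $\mathrm Z(C)\leqslant \mathrm C_C(F)=E_T$ by Lemma~\ref{lemma:omnibus gt}(2). Since $E_T$ is an irreducible $C$-module on which $C/G(T)\cong S_3$ acts non-trivially by permuting $\{e_x,e_y,e_z\}$, we have $\mathrm Z(C)\cap E_T=0$, so $\mathrm Z(C)=1$.

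For $\Aut(C)=\Inn(C)$, let $\alpha\in\Aut(C)$. The subgroups $F=\mathrm O_2(C)$, $E_T=\mathrm Z(F)$ and $N$ are each characteristic in $C$; for $N$ this is because $\overline N$ is the unique $2$-dimensional $C$-submodule of $F/E_T$. Indeed, by Lemma~\ref{lemma:omnibus gt}(7), $F/E_T=\overline{E_x}\oplus\overline{E_y}\oplus\overline N$ as a $G(T)$-module, and $C/G(T)\cong S_3$ acts transitively on the set $\{\overline{E_x},\overline{E_y},\overline{E_z}\}$ of $2$-dimensional $G(T)$-submodules (with $\overline{E_z}$ sitting in $\overline{E_x}\oplus\overline{E_y}$ as a diagonal), leaving $\overline N$ as the only $C$-invariant $2$-dimensional submodule. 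Consequently $\alpha(N)=N$ and $\alpha|_N$ preserves $E_T$, so $\alpha|_N$ lies in the stabiliser in $\Aut(N)\cong L_4(2)$ of $E_T$; by Lemma~\ref{lem:gt isom} this stabiliser coincides with the image of the action of $C/N\cong K$ on $N$. Hence after composing $\alpha$ with a suitable inner automorphism I may assume that $\alpha|_N$ is the identity on $N$.

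Then $c^{-1}c^\alpha\in \mathrm C_C(N)=N$ for every $c\in C$ by Lemma~\ref{lem:selfcent}(2), and the map $\delta:c\mapsto c^{-1}c^\alpha$ is a $1$-cocycle $C\to N$ that factors through $C/N\cong K$. The automorphism $\alpha$ will be inner (via conjugation by an element of $N$) precisely when $\delta$ is a coboundary, so the main obstacle is to show $H^1(K,N)=0$. I would approach this via the short exact sequence of $K$-modules $0\to E_T\to N\to N/E_T\to 0$, using inflation-restriction with respect to the normal subgroup $\mathrm O_2(K)$ of $K$ (which fixes $E_T$ pointwise and induces $N/E_T$ as a faithful quotient). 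This reduces the problem to showing vanishing of $H^1(S_3\times S_3,E_T)$ and $H^1(S_3\times S_3,N/E_T)$, each of which is a natural $L_2(2)$-module for a distinct $S_3$ factor of $K/\mathrm O_2(K)$ and so has trivial first cohomology by standard coprime-action arguments, together with a similar treatment of the $\mathrm O_2(K)$-cohomology contributions.

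For the final assertion, since $\Aut(C)=\Inn(C)\cong C$, the stabiliser of $B\cap C$ in $\Aut(C)$ corresponds to $\mathrm N_C(B\cap C)$. But $B\cap C$ is the stabiliser in $C=G\{T\}$ of the vertex $x$ within the triangle $T=\{x,y,z\}$, and $C$ acts transitively as $S_3$ on $T$; hence $B\cap C$ is self-normalising in $C$, and the stabiliser of $B\cap C$ in $\Aut(C)$ induces precisely the inner automorphism group of $B\cap C$.
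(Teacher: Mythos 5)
Your route is genuinely different from the paper's. The paper writes $C=NRS$ with $S=\mathrm N_{\mathrm{Inn}(C)}(X)\cong S_3\times S_3$ for a Sylow $3$-subgroup $X$, applies the Frattini argument to get $\Aut(C)=\mathrm N_{\Aut(C)}(X)\,\Inn(C)$, and then shows by module-theoretic centralising arguments that an automorphism centralising $S$ must centralise $E_T$, $E=\mathrm C_N(T_2)$, hence $N$, hence $R$, hence all of $C$ -- no cohomology is needed. Your reduction is clean up to the point where it matters: $\mathrm Z(C)=1$ is argued correctly; $N$ is characteristic (though your uniqueness claim for $\overline N$ inside $F/E_T$ is a little quick -- a priori there could be further diagonal $2$-dimensional $C$-submodules meeting $\overline N$ trivially, and the clean way to exclude them is the paper's argument that any second normal $2^4$ would produce a proper nonzero $C$-submodule of the irreducible module $F/N$ of Lemma~\ref{lem:selfcent}(1)); and the identification of $\mathrm{Stab}_{\Aut(N)}(E_T)$ with the image of $C/N$, using $\mathrm C_C(N)=N$, correctly reduces everything to $H^1(K,N)=0$.

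That last step is where there is a genuine gap. The statement $H^1(K,N)=0$ is true (indeed it is forced by the lemma itself, since an automorphism trivial on $N$ can only be conjugation by an element of $\mathrm C_C(N)=N$), but your justification does not establish it. First, ``standard coprime-action arguments'' do not apply anywhere here: $N$, $E_T$ and $N/E_T$ are $2$-groups and $|K|$ is even, so no coprimality is available; the vanishing of $H^1(S_3,V)$ for the natural $L_2(2)$-module is a genuine (if small) computation, and for the factor of $S_3\times S_3$ acting trivially on the coefficients one must also kill the term $\mathrm{Hom}(C_2,E_T)^{S_3}$, which vanishes only because $E_T^{S_3}=0$. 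Second, and more seriously, in the inflation--restriction sequence for $\mathrm O_2(K)\trianglelefteq K$ the restriction term $H^1(\mathrm O_2(K),-)^{K/\mathrm O_2(K)}$ is exactly the part that carries content: for the coefficient modules $E_T$ and $N/E_T$ it amounts to showing $\mathrm{Hom}_{S_3\times S_3}(\mathrm O_2(K),E_T)=\mathrm{Hom}_{S_3\times S_3}(\mathrm O_2(K),N/E_T)=0$, using that $\mathrm O_2(K)\cong (N/E_T)^*\otimes E_T$ as an $(S_3\times S_3)$-module. This is dismissed in your write-up as ``a similar treatment of the $\mathrm O_2(K)$-cohomology contributions.'' The computation does go through and your strategy is salvageable, but as written the crux of the proof is asserted rather than proved; the paper's Frattini-type argument avoids the issue entirely and is the shorter path.
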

\begin{proof}
Write $C = N R S$ where $S$ is the normaliser of a Sylow 3-subgroup $X$ of $C$ and $RS$ is a
complement to $N$ in $C$ (so $R \cong C_2^4$). Observe that $F=O_2(C)$ and $E_T = Z(F)$ are
characteristic subgroups of $C$. We claim that $N$ is the unique normal subgroup of $C$ of order
$2^4$, and is therefore characteristic. Suppose that $M$ is another such subgroup. Then $M \leqslant F$ and
so  $M \cap Z(F)=M \cap E_T$ is  a nontrivial normal subgroup of $C$. Since $E_T$ is irreducible as a
$C$-module, we have $E_T \leqslant M$. If $M \neq N$ then $M \cap N = E_T$ by Lemma~\ref{lemma:omnibus gt} (6) and therefore $MN/N$ has
order $2^2$ and is a normal subgroup of $C/N$ contained in $F/N$. This contradicts
Lemma~\ref{lem:selfcent} (1). 

Let $\Gamma=\Aut(C)$. By the Frattini argument we have $\Gamma = \mathrm N_\Gamma(X)I$, for $I:=\mathrm {Inn}(C)$. Let
$h\in\mathrm N_\Gamma(X)$ and note that $h$ normalises $\mathrm N_I(X)=S$, which is isomorphic to $S_3 \times S_3$ by
Lemma~\ref{lem:g(t) splits}. Let  $T_1$, $T_2$ be subgroups of $S$ so that $S = T_1 \times T_2$ and $T_1
\cong T_2 \cong S_3$ and label so that $T_1 = \mathrm C_S(E_T)$ and $T_2$ acts faithfully on $E_T$. Since $E_T$ is
characteristic in $C$ we see that $h$ must normalise both $T_1$ and $T_2$. Thus, after adjusting $h$ by
an inner automorphism if necessary, we may assume $h \in \mathrm C_\Gamma(S)$.

Now $h$ normalises each of $E_T$, $N$ and $T_2$, so $h$ normalises the complement $E:=\mathrm C_N(T_2)$ to $E_T$ in
$N$. Since $E_T$ and $E$ are irreducible modules for $T_1$ and $T_2$, we see that $h$ centralises both $E_T$
and $E$, whence $h$ centralises $N$. We now claim that $h$ normalises $R$. Since $R$ is an absolutely irreducible
module for $S$ and $h$ centralises $S$, it will follow that $h$ centralises $R$ and therefore $h$
centralises $NRS = C$, from which we conclude $h \in I$ as desired. We now prove the claim.  Note that
$T_2$ centralises $N/E_T$ and that $T_2$ preserves a decomposition of $R$ into two irreducible modules.
Since $h$ acts on $F/E_T$ and normalises $[T_2,F/E_T]=RE_T/E_T$ we see that $h$ normalises $RE_T$. Now $T_1$
centralises $E_T$ and also preserves a decomposition of $R$ into two irreducible modules, thus
$[T_1,E_TR]=R$. Since $h$ normalises $E_TR$ and $T_1$ we see that $h$ normalises $R$. This proves the claim
and we obtain the lemma.
\end{proof}

We are now in a position to determine the number of isomorphism classes of amalgams of type $
\mathcal U_n$. Recall that
an amalgam $\{B,C\}$ is faithful if there is no normal subgroup contained in $B \cap C$.

\begin{theorem}
\label{thm:4}
There are four amalgams of type $\mathcal U_4$, and precisely two of these are faithful. For $n\geqslant 5$ there is a unique amalgam of type $\mathcal U_n$.
\end{theorem}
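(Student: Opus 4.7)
The plan is to apply Goldschmidt's theorem (recalled at the start of Section~\ref{sec:determine of isom}): the number of isomorphism classes of amalgams of type $\mathcal U_n$ equals the number of double cosets of $D_1$ and $D_2$ in $D=\Aut(B_n\cap C_n)$, where $D_i$ is the image in $D$ of the normaliser of $B_n\cap C_n$ in $\Aut(A_i)$ (with $A_1=B_n$, $A_2=C_n$). Since inner automorphisms always lie in $D_1\cap D_2$, this count equals the number of double cosets of the images of $D_1,D_2$ inside $\Out(B_n\cap C_n)$. For $n\geqslant 5$, Proposition~\ref{prn:GxGT} gives $\Aut(B_n\cap C_n)=\Inn(B_n\cap C_n)$, whence $D=D_1=D_2$ and there is a single double coset, proving the second assertion immediately.

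For $n=4$, Proposition~\ref{prn:GxGT} gives $|\Out(B\cap C)|=4$, while Lemmas~\ref{lem:Gxinduced} and \ref{lem:autGT} show that the stabilisers of $B\cap C$ in $\Aut(B)$ and in $\Aut(C)$ both induce only inner automorphisms of $B\cap C$. Hence $D_1=D_2=\Inn(B\cap C)$ inside $D$, and the number of double cosets in $D$ is $|\Out(B\cap C)|=4$, producing four amalgams of type $\mathcal U_4$.

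To separate the faithful from the unfaithful amalgams, I would first enumerate the nontrivial normal subgroups of $B=G(x)$ contained in $B\cap C$: using the structure $G(x)\cong 2^{1+6}_+\rtimes L_3(2)$ from Proposition~\ref{prn:S} and the module description $Q(x)/\langle e_x\rangle\cong V\oplus V^*$ from Lemma~\ref{lem:qx/ex is semisimple}, these are $\langle e_x\rangle,E_x,E^x,Q(x)$, of orders $2,2^4,2^4,2^7$. Similarly, Lemmas~\ref{lemma:omnibus gt} and \ref{lem:selfcent} identify the nontrivial normal subgroups of $C=G\{T\}$ contained in $B\cap C$ as $E_T,N,F,G(T)$, of orders $2^2,2^4,2^8,2^9\cdot 3$. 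The amalgam obtained by twisting the embedding $B\cap C\hookrightarrow C$ by $\phi\in\Out(B\cap C)$ is unfaithful precisely when some nontrivial $B$-normal $H\leqslant B\cap C$ has $\phi(H)$ $C$-normal; comparing orders reduces this to the question of whether $\phi$ carries $E_x$ or $E^x$ onto $N$.

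Finally, I would pin down the action of $\Out(B\cap C)=\langle\alpha,\beta\rangle\cong C_2\times C_2$ on $E_x,E^x,N$: the automorphism $\alpha$ of (\ref{alpha def}) centralises $Q(x)$ (by construction, see Proposition~\ref{prn:GxGT}) and therefore fixes all three subgroups pointwise, whereas $\beta$ of (\ref{beta def}) interchanges $E_x$ and $N$ (as recorded in Lemma~\ref{lem:tau outside caqx inn}) and, by direct inspection of the generators in Notation~\ref{notnforgx}, stabilises $E^x$. Consequently the twists $\phi\in\{1,\alpha\}$ fix every subgroup in the $B$-normal list, so no nontrivial common subgroup appears and the resulting amalgams are faithful, while the twists $\phi\in\{\beta,\alpha\beta\}$ send $E_x$ onto $N$, making those amalgams unfaithful. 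This yields exactly two faithful and two unfaithful amalgams of type $\mathcal U_4$. The main delicate step is the explicit verification that $\beta$ swaps $E_x$ with $N$ but fixes $E^x$; this amounts to evaluating $\beta$ on the generators listed in Notation~\ref{notnforgx}, a short calculation guided by the formula~(\ref{beta def}).
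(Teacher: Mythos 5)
Your proposal is correct and, for the counting part, follows the paper's argument exactly: Goldschmidt's theorem together with Proposition~\ref{prn:GxGT} and Lemmas~\ref{lem:Gxinduced} and \ref{lem:autGT} gives one class for $n\geqslant 5$ and four classes for $n=4$ (and your observation that $\Inn(B\cap C)$ is normal in $\Aut(B\cap C)$, so double cosets are single cosets, is a clean shortcut past the paper's explicit check that $I$, $I\alpha$, $I\beta$, $I\alpha\beta$ lie in distinct double cosets). Where you diverge is in establishing that exactly two classes are faithful. The paper's proof of Theorem~\ref{thm:4} argues that $\alpha$ centralises $Q$ and hence preserves faithfulness, pairing the four classes into two pairs of equal status, and then invokes the existence of a faithful completion (inside $M_{24}$) and an unfaithful one (inside $\AGL_4(2)$) to conclude that each status occurs. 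You instead determine the faithfulness of each twist directly, by listing the normal subgroups of $B$ and of $C$ inside $B\cap C$, matching orders to reduce everything to whether the twist carries $E_x$ or $E^x$ onto the unique order-$2^4$ normal subgroup $N$ of $C$, and then using Lemma~\ref{lem:tau outside caqx inn} to see that $\beta$ (but not $\alpha$) effects this. This is essentially the computation the paper defers to Section~\ref{sec:realisations}; it is self-contained and avoids appealing to the sporadic completions, at the cost of needing the uniqueness of $N$ (proved inside Lemma~\ref{lem:autGT}) and the nonexistence of $C$-normal subgroups of orders $2$ and $2^7$, small verifications you gloss over but which go through. One presentational caveat: your base point is the faithful amalgam of Section~\ref{sec:determine} (where $N=W_2\neq E_x$), so your labelling of which twists are faithful is the opposite of the paper's Section~\ref{sec:realisations}, where the base is the unfaithful $\mathcal U_4$ and the unique $C$-normal subgroup of order $2^4$ pulls back to $E_x$ itself; the conclusion "exactly two faithful" is of course the same either way.
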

\begin{proof}
We use Goldschmidt's Theorem~\cite[(2.7)]{Goldschmidt80}. By Lemmas \ref{lem:autG(x)} and \ref{lem:autGT}
this
says that the number of isomorphism classes of amalgams of the same type as $\mathcal U_n$ is the number
of double cosets of $I:=\mathrm{Inn}(B_n \cap C_n )$ in $T:=\mathrm{Aut}(B_n \cap C_n)$. Hence for $n\geqslant 5$ there is a unique isomorphism class of amalgams of type $\mathcal U_n$ by Proposition~\ref{prn:GxGT}. Now consider the case $n=4$ and let $B=B_4$, $C=C_4$ and $Q=Q_4$. Let $\alpha$ and $\beta$ 
be the automorphisms of $B \cap C $ defined in (\ref{alpha def}) and (\ref{beta def}) respectively.
Proposition~\ref{prn:GxGT} shows that there are four cosets of $I$ in $T$.  It is easy to check that the cosets $I$, $I\alpha$, $I\beta$ and $I\alpha\beta$ are distinct.
To see then that there are exactly four amalgams of this type, we just need to show that all of these cosets
are in distinct $I$-orbits. If this is not the case, then we must have $I\alpha\beta I = I \beta I$. This
implies there are $g,h\in I$ such that $\beta = h\alpha\beta g$. That is $h^{-1} \beta g^{-1} \beta^{-1} =
\alpha$, which gives $\alpha \in I$, a contradiction.

The automorphism $\alpha$ of $B \cap C $ preserves  faithfulness since  every normal subgroup of $B$ contained in $B \cap C$ is
contained in $Q$. There exist faithful and unfaithful amalgams of type $\mathcal U_4$ inside $M_{24}$ and  $\mathrm{AGL}_4(2)$ respectively. Thus we see that exactly two of the four isomorphism classes of amalgams of type $\mathcal U_4$ are faithful.
\end{proof}

Theorem \ref{thm:main} now follows from Proposition \ref{thm:stabs} and 
Theorem \ref{thm:4}.

\section{Completions}
\label{sec:realisations}

In this final section we find presentations for the universal completions of the two faithful amalgams appearing in Theorem~\ref{thm:main} and we give finite completions for both. To derive these presentations, it is convenient to begin with an unfaithful amalgam of the same type and use Theorem~\ref{thm:4} to obtain the faithful amalgams. Recall the definition of $\mathcal U_4$ from the beginning of Section~\ref{sec:determine of isom}. 
We let $\mathcal U_4=\{G_1, G_2\}$ and  view $\mathrm{AGL}_4(2)$ as a subgroup of $\mathrm L_5(2)$. We then have that
\begin{eqnarray*}
G_1 & = &  \la a_1,a_2,a_3,a_4,a_5,a_6,a_7,a_8,a_9,a_{10},a_{11},a_{12}\ra, \\
G_2 & = &  \la a_1,a_2,a_3,a_4,a_5,a_6,a_7,a_8,a_9,a_{10},a_{11},a_{13}\ra, \\
G_1 \cap G_2 & = & \la a_1,a_2,a_3,a_4,a_5,a_6,a_7,a_8,a_9,a_{10},a_{11}\ra
\end{eqnarray*}
where $a_i$ is the element of $\mathrm{L}_5(2)$ with 1's on the diagonal and 0's everywhere except for the position of $a_i$ given below.
$$\left( \begin{array}{ccccc}
         1   & 0   & 0      & 0      & 0        \\
         a_1 & 1   & a_{11} & 0      & 0      \\
         a_2 & a_3 & 1      & a_{12} & 0 \\
         a_4 & a_5 & a_6    & 1      & a_{13}  \\
         a_7 & a_8 & a_9    & a_{10} & 1       
         \end{array}\right) $$

To obtain the amalgams of the same type as $\mathcal U_4$ in the different isomorphism classes we need to use the automorphisms $\alpha$ (see (\ref{alpha def})) and $\beta$ (see (\ref{beta def})) of $G_1 \cap G_2$ given below (where $\alpha$ and $\beta$ act trivially on the generators not listed).

\begin{eqnarray*}
\alpha & : &  \left \{ \begin{array}{ccc} a_{3} &\mapsto &a_7 a_{3}\\ a_{11} & \mapsto& a_7 a_{11} \end{array}\right . \\
\beta & : &  \left \{\begin{array}{ccc} a_1 &\mapsto &a_1a_7 a_9\\ a_2 & \mapsto & a_2a_7a_8 \end{array}\right.
\end{eqnarray*}

Let us write $\mathcal U_4^\sigma$ for the amalgam obtained from $\mathcal U_4$ using the map $\sigma \in \{\alpha,\beta,\alpha\beta\}$. Note that $E_1:=\la a_7 \ra$, $E_2:=\la a_1,a_2,a_4,a_7\ra$ and $E_3 = \la a_7,a_8,a_9,a_{10} \ra$ are the only normal subgroups of $G_1$ contained in $G_1 \cap G_2$. The amalgams $\mathcal U_4$ and $\mathcal U_4^\alpha$ are unfaithful precisely because $E_2$ is normalised by $G_1$, $G_2$ and by $\alpha$. On the other hand $E_1$ and $E_3$ are normalised by $\beta$, but not by $a_{13}$, and $[ E_2^\beta, a_{13}] \nleq E_2^\beta$. Thus the amalgams $\mathcal U_4^{\alpha\beta}$ and $\mathcal U_4^{\beta}$ are faithful.

For $\sigma \in \{\beta, \alpha\beta\}$ we denote the universal completion of the amalgam $\mathcal U_4^\sigma$ by $\mathcal G^{\sigma}$. For $1\leqslant i \leqslant j \leqslant 13$ we let $R(i,j)$ be a relation between $a_i $ and $a_j$  that holds in $\mathrm L_5(2)$ and for $1 \leqslant i \leqslant 11$ we write $R^\sigma(i,j)$ for a relation between $a_i^\sigma$ and $a_j$. Then we obtain
$$\mathcal G^\sigma =\left \langle \begin{array}{c|c} a_1,a_2,a_3,a_4,a_5,a_6,a_7,

& R(i,i) \text{ for } 1\leqslant i \leqslant 13,\ R(i,j)\text{ for } 1 \leqslant i < j \leqslant 12,\\ 
a_8,a_9,a_{10},a_{11},a_{12},a_{13}&
  R^\sigma(i,13)\text{ for } 1 \leqslant i \leqslant 11 \end{array} \right \ra.$$
For the relations we have $R(i,i)=a_i^2$ for $1 \leqslant i \leqslant 13$, 
$R(3,11)=(a_3a_{11})^3$, 
$R(10,11)=(a_{10}a_{11})^3$, 
$R(6,12)=(a_6 a_{12})^3$, 
$R(11,12)=(a_{11}a_{12})^4$, and the remaining relations are of the form  $R(i,j)=[a_i,a_j]w(i,j)$ for some $w(i,j) \in G_1 \cap G_2$ which can be calculated by directly multiplying the matrices above. We note explicitly that $R^\beta(1,13)=[a_1,a_{13}]a_4a_6$, $R^\beta(2,13)=[a_2,a_{13}]a_4a_5$, $R^{\alpha\beta}(3,13)=[a_3,a_{13}]a_4$ and $R^{\alpha\beta}(11,13)=[a_{11},a_{13}]a_4$.

%

We observe that the subgroup $L=\la a_3,a_5,a_6,a_8,a_9,a_{10},a_{11},a_{12},a_{13}\ra $ of $\mathcal G^{\beta}$ is complemented by the elementary abelian subgroup $\la a_1,a_2,a_4,a_7\ra$ of order $2^4$. This gives a representation $\pi$ of $\mathcal G^\beta$ of degree $16$. Moreover, since each normal subgroup of $G_1$ contains $a_7$ and each normal subgroup of $G_2$ contains the subgroup $\la a_4,a_7 \ra$, $\pi$ restricted to $G_1$ or $G_2$ is faithful. We claim that $A_{16}$ is a faithful completion of $\mathcal G ^\beta$, that is, $\mathcal G^\beta \pi= A_{16}$. Since $G_1$ is perfect, we have $G_1 \pi \leqslant A_{16}$ and it is easy to check that $a_{13}\pi$ is of cycle type $(a,b)(c,d)(e,f)(g,h)$. Thus $\mathcal G^\beta\pi $ is an insoluble transitive subgroup of $A_{16}$ so if the claim is false it must be that $\mathcal G^\beta\pi$ is contained in a transitive insoluble maximal subgroup, conjugate to one of 
$$\mathrm{AGL}_4(2),\ S_2 \wr S_8,\ S_8\wr S_2.$$
We note that $G_2 \pi$ preserves only blocks of size four by examining the action on the regular normal subgroups of order $2^4$. If  $G_i\pi \leqslant \mathrm{AGL}_4(2)$ for $i\in \{1,2\}$ then order considerations show that $G_i\pi$ contains a Sylow 2-subgroup of $\mathrm{AGL}_4(2)$ and therefore contains the regular normal subgroup  of order $2^4$. Since $\mathcal U_4^\beta$ is faithful therefore, we have $\mathcal G^\beta\pi \nleqslant \mathrm{AGL}_4(2)$ and this gives the claim.

In the introduction we noted that $M_{24}$ and $He$ are completions of faithful amalgams of type $\mathcal U_4$. Indeed, using {\sc Magma} \cite{magma} we see that adding the following set of relations to $\mathcal G_4^{\alpha\beta}$ gives $M_{24}$ as a quotient:
$$\{   (a_6  a_{12}  a_{13})^5,
 (a_{11}  a_{12}  a_{13})^{11},
 (a_{10}  a_{12}  a_{13})^5   \}$$
and adding the following set of relations gives $He$ as a quotient:
$$ \{  (a_{12}  a_2  a_8  a_{13})^5 ,\ 
(a_6  a_{12}  a_2  a_7  a_8  a_{13})^5 ,\ 
(a_{10}  a_8  a_{13}  a_{12}  a_7)^5 \} .$$
In the same way we find  $\mathcal G_4^\beta$ has no index 24 subgroup and that  $\mathcal G_4^{\alpha\beta}$ has no index 16 subgroup. Thus $A_{16}$ is a completion of  $\mathcal U_4^{ \beta}$ alone and the sporadic groups $M_{24}$ and $He$ are completions of  $\mathcal U_4^{\alpha \beta}$ only.

%
%

\end{document}